\documentclass[11pt]{article}
\usepackage{format}

\title{Hitting-time mixing for the star transposition shuffle}

\author{
Vanshika Jain\thanks{Department of Mathematics, Princeton University -- Princeton, USA. Email: \href{mailto:vanshika@princeton.edu} {\nolinkurl{vanshika@princeton.edu}}. Research supported in part by an NSF Graduate Research Fellowship DGE-2039656.} \and
Evita Nestoridi \thanks{Stony Brook University, NY 11794. Email: \texttt{evrydiki.nestoridi@stonybrook.edu}. Supported by the Simons Foundation Travel Support for Mathematicians -- MPSTSM00007955 and the NSF grant DMS-2450510.}}
 \date{}

\hypersetup{
    pdftitle={Hitting-time mixing for the star transposition shuffle},
    pdfauthor={Vanshika Jain and Evita Nestoridi}
}

\begin{document}

\maketitle
\begin{abstract}
We prove a hitting-time analogue of cutoff for the star transposition shuffle on
\(\mathfrak S_n\). Let \(\tau\) be the first time at which every non-top card has
been selected. We show that the shuffle is asymptotically mixed at time
\(\tau\): more precisely,
\[
d_{\mathrm{TV}}(\mathcal L(Y_{\tau}),U_{\mathfrak S_n})
\le \exp\left(-(\log n)^{1/2+o(1)}\right).
\]
Our proof compares the star transposition shuffle with the random transposition
shuffle, using simultaneous diagonalization of the two transition kernels, and
then adapts the hitting-time strategy of Jain and Sawhney \cite{jain2024hitting}, introducing a technique that can be applied to card shuffles that are not necessarily conjugacy invariant.
\end{abstract}


\section{Introduction}
Card shuffling has long been a central testing ground for the study of finite
Markov chains. These models provide concrete settings in which to ask
questions about convergence to stationarity, including cutoff and limit
profiles. Classical examples include the riffle shuffle
\cite{bayer1992trailing}, the random transposition shuffle
\cite{diaconis1981generating,teyssier2020limit}, the top-to-random and
random-to-top shuffles \cite{diaconis1992analysis}, the random-to-random
shuffle \cite{bernstein2019cutoff}, random walks generated by \(k\)-cycles
\cite{hough2016random}, and more general conjugacy-invariant walks on
\(\mathfrak S_n\) \cite{berestyckiSengul2019}. More recently, the study of
limit profiles has led to general techniques for reversible Markov chains
\cite{francisNestoridi2026separation, comparison-nestoridi,nestoridiOleskerTaylor2022,teyssier2020limit, Teyssier2026CutoffProfiles}.
The present paper studies a hitting-time refinement of this theory, in which a
natural stopping time captures the statistic governing convergence.

Jain and Sawhney \cite{jain2024hitting} proved such a hitting-time statement
for the random transposition shuffle: if \(\tau\) is the first time every card
has been touched, then the walk is asymptotically mixed at time \(\tau\). Their
argument combines a deterministic-time approximation near the cutoff window
with an approximate-sufficient-statistic result for the untouched set. A key
feature of the random transposition walk is that its increment measure is
conjugacy invariant. Thus Schur's lemma diagonalizes the walk on irreducible
representations, simplifying the analysis, as in the classical
paper of Diaconis and Shahshahani \cite{diaconis1981generating}. 
Related recent work proves hitting-time mixing for random \(k\)-cycles in broad
parameter regimes \cite{shang2026hitting} and determines cutoff profiles for
random transpositions on repeated cards over the full parameter range
\cite{shen2026repeated}.

In this paper, we prove an analogous result for the star transposition shuffle, whose increment
measure is not a class function and hence does not fall within this framework.
We overcome this by comparing the star transposition shuffle to the random transposition shuffle:
the star transposition kernel commutes with the
random transposition kernel and can therefore be simultaneously diagonalized
with it. This comparison lets us transfer deterministic-time estimates to the
star shuffle and adapt the hitting-time strategy of
\cite{jain2024hitting}.

\subsection{Star transposition shuffle}
Jucys--Murphy elements explain particularly clearly why star transpositions
are natural from the viewpoint of representation theory. In the group algebra
\(\mathbb C[\mathfrak S_n]\), the \(k\)-th Jucys--Murphy element is
\[
J_1=0, \qquad 
J_k=\sum_{i<k} (i\,k), \qquad 2\leq k\leq n,
\]
and is therefore built directly out of the star transpositions adjacent to
$k$.  The remarkable fact is that these elements commute and form a set of
simultaneously diagonalizable observables whose eigenvalues encode the contents
of boxes in standard Young tableaux; in this way they give a concrete
``Cartan-like'' replacement for the symmetric group, which has no large
commutative subgroup playing the role of a Cartan subgroup
\cite{Jucys1974,Murphy1981,OkounkovVershik1996}.  This point of view is central
to the Okounkov--Vershik approach to the representation theory of the symmetric
groups, where the branching rule, Young tableaux, and Young's seminormal form
are recovered from the spectra of the Jucys--Murphy elements themselves
\cite{OkounkovVershik1996}.  Moreover, the significance of these elements is
not confined to type $A$: Ram's work shows that analogues of Jucys--Murphy
elements exist for Weyl groups and Iwahori--Hecke algebras, and that their
combinatorics allows one to compute irreducible representations explicitly
\cite{Ram1997,Ram2003}.  Thus, random walks generated by star transpositions
are not merely examples of non-conjugacy-invariant walks; they are generated by
some of the most fundamental algebraic building blocks underlying the
representation theory of symmetric groups and their Coxeter--Hecke
generalizations.

The star transposition shuffle is the Markov chain on permutations obtained by repeatedly swapping a uniformly chosen card with the top card, which corresponds to the Jucys--Murphy element \(X_n\) after appropriate relabeling. Our main question is: starting from the identity, after how many steps is the deck approximately uniform?

Formally, let \(\mathfrak S_n\) denote the symmetric group on \([n] = \{1, 2, \ldots, n\}\). Define the Markov chain \((Y_t)_{t\ge 0}\) on \(\mathfrak S_n\) by \(Y_0=\mathrm{id}\) and, for \(t\ge 1\),
\[
    Y_t=Y_{t-1}(1,I_{t}),
\]
where, for each \(t\), \(I_{t}\) is sampled uniformly from \([n]\) and independently over \(t\). We view a permutation \(\sigma\) as a map from positions to card labels, so \(\sigma(i)\) is the card occupying position \(i\). In other words, at each step, the top card is transposed with a uniformly chosen card, possibly itself. The increments of this Markov chain are distributed according to the probability measure \(\rho^\star\) on \(\mathfrak{S}_n\) given by
\[\rho^\star(\sigma) = \begin{cases}
    1/n &\text{for } \sigma = (1\,i),\, i \in [n] \\
    0 &\text{otherwise}. 
\end{cases}\]
The associated transition kernel is \(Q(x, y) = \rho^\star(x^{-1} y)\).

We measure convergence to stationarity using total variation distance. For probability measures \(\mu\) and \(\nu\) on \(\mathfrak S_n\), define
\[d_{\mathrm{TV}}(\mu, \nu):=\frac12\sum_{\sigma\in \mathfrak S_n}|\mu(\sigma)-\nu(\sigma)|.\]
Let \(\mathcal{L}(Y_t)\) denote the law of \(Y_t\), and let \(U_{\mathfrak{S}_n}\) denote the uniform measure on \(\mathfrak S_n\). Our aim is to understand times \(t\) for which \(d_{\mathrm{TV}}(\mathcal{L}(Y_t), U_{\mathfrak{S}_n})\) is small.

At deterministic times, one can ask not only when the walk becomes close to uniform, but also how abrupt this convergence is and what its asymptotic shape is during the transition. These questions are captured by the notions of cutoff and limit profile, and for the star-transposition shuffle the deterministic-time picture is well understood. Classical work of Flatto, Odlyzko, and Wales identified the spectrum of the walk \cite{flatto1985random}, and Diaconis used this spectral information to show that the star-transposition shuffle exhibits cutoff at time \(n\log n\) with a window of order \(n\) \cite{diaconis1988applications}. The limit profile for star transpositions has also been determined: for each fixed \(c\in\mathbb R\), at time \(t=n(\log n+c)\), the total variation distance converges to \(d_{\mathrm{TV}}(\mathrm{Pois}(1+e^{-c}),\mathrm{Pois}(1))\) \cite{comparison-nestoridi}.

\subsection{Random transposition shuffle}
The same deterministic-time picture holds for the classical random
transposition shuffle. This chain on \(\mathfrak S_n\) is defined by
\(X_0=\mathrm{id}\) and, for \(t\ge 1\),
\[
    X_t=X_{t-1}(I_{t},J_{t}),
\]
where \(I_t\) and \(J_t\) are sampled independently and uniformly from \([n]\) and 
independently over \(t\). The increments of this chain are distributed according to 
the probability measure \(\rho\) on \(\mathfrak S_n\) given by 
\[\rho(\sigma) = \begin{cases}
    1/n &\text{if } \sigma = \text{id}\\
    2/n^2 & \text{if } \sigma = (ij),\, 1 \le i < j \le n\\ 
    0 &\text{otherwise}. 
\end{cases}\] The associated transition kernel is \(P(x, y) = \rho(x^{-1} y)\). Diaconis and Shahshahani proved that the random transposition shuffle exhibits cutoff at time \(\frac12 n\log n\) \cite{diaconis1981generating}. Teyssier \cite{teyssier2020limit} determined the corresponding limit profile, showing that at times \(t=\frac12 n\log n+cn\),
\[
d_{\mathrm{TV}}(\mathcal L(X_t),U_{\mathfrak{S}_n})\to d_{\mathrm{TV}}(\mathrm{Pois}(1+e^{-2c}),\mathrm{Pois}(1)).
\]
Thus Teyssier's theorem sharpens the Diaconis--Shahshahani cutoff result in the same way that Nestoridi's theorem sharpens the cutoff theorem for star transpositions. Jain and Sawhney \cite{jain2024hitting} subsequently established a hitting-time analogue for random transpositions, showing that mixing occurs at a natural random stopping time, the first time all cards have been touched, rather than only at deterministic times. Our result is the corresponding hitting-time statement for the star-transposition shuffle.

\subsection{Main results}
A natural stopping time for the star transposition shuffle is the first time every non-top card has been selected, namely
\[
\tau:=\inf\{t\ge 0:\{2,\dots,n\}\subseteq \{I_1,\dots,I_t\}\}.
\]
This is the earliest time at which one can reasonably expect the walk to be close to stationarity. Indeed, if some \(i\ge 2\) has not appeared among \(I_1,\dots,I_t\), then \(Y_t(i)=i\); before time \(\tau\), the chain is supported on permutations with at least one fixed point. Since a uniformly random permutation has asymptotically \(\mathrm{Pois}(1)\) many fixed points, it has no fixed points with probability \(e^{-1}+o(1)\). Thus the uniform measure assigns positive mass to configurations that the walk cannot reach before \(\tau\), giving a genuine obstruction to mixing. Our main theorem shows that this lower bound is sharp: the walk is already asymptotically uniform when the last untouched card is first selected.

\begin{thm}\label{thm:hitting-time-mixing-star}
    Let \(U_{\mathfrak{S}_n}\) be the uniform distribution on \(\mathfrak{S}_n\). Then 
    \[d_{\mathrm{TV}}(\mathcal L(Y_{\tau}), U_{\mathfrak{S}_n}) \leq \exp\left(-(\log n)^{1/2 + o(1)}\right).\]
\end{thm}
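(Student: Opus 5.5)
Following the strategy of Jain and Sawhney, the plan is to decompose according to the random time \(\tau\) and the configuration just before it. Let \(t_0 = n\log n - Ln\) with \(L=L(n)\to\infty\) slowly, chosen later of order \((\log n)^{1/2}\). By coupon-collector estimates, \(\tau \ge t_0\) with probability at least \(1-\exp(-e^{L}/C)\). At time \(t_0\) the set \(S\subseteq\{2,\dots,n\}\) of untouched cards has size roughly Poisson with mean \(e^{L}\); condition on \(S\) and on \(Y_{t_0}\). The key structural observation is that after \(t_0\), the time \(\tau\) depends only on the selection sequence. Moreover, \(Y_\tau\) is obtained from \(Y_{t_0}\) by further star transpositions, and every card of \(S\) is first swapped in before \(\tau\). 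So it suffices to show two things. First, the law of \(Y_{t_0}\) conditioned on the untouched set \(S\) is close to uniform on permutations fixing \(S\) pointwise. Second, the remaining steps up to \(\tau\) then produce an essentially uniform permutation.

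\textbf{Step 1 (deterministic time, via comparison).} I need a quantitative bound on \(d_{\mathrm{TV}}(\mathcal L(Y_{t}), U)\) for \(t = n\log n + cn\), together with an \(L^2\)-type bound after conditioning on the set of selected cards. Since \(\sum_i (1\,i)\) commutes with the class sum of transpositions (the star element is a relabeled Jucys--Murphy element, and \(J_n\) commutes with \(\mathbb C[\mathfrak S_{n-1}]\) and with the center), \(P\) and \(Q\) are simultaneously diagonalizable. Their joint eigenvalues are indexed by a partition \(\lambda\) together with the content of the box removed from \(\lambda\). Using these explicit eigenvalues I would bound a chi-square distance for the star walk by a weighted sum resembling the Diaconis--Shahshahani sum. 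This gives a bound of the form \(\exp(-c')\) in \(L^2\) on the part orthogonal to the fixed-point statistic. Equivalently, it shows that the number of fixed points is the approximately sufficient statistic.

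\textbf{Step 2 (approximate sufficiency of the untouched set).} This is where the lack of conjugacy invariance bites, and I expect it to be the main obstacle. In Jain--Sawhney, conditioning on the untouched set \(S\) and then averaging over conjugation by permutations of the touched cards is free, because the walk is conjugacy invariant. Here the walk is only invariant under conjugation by \(\mathfrak S_{\{2,\dots,n\}}\), which fixes the top card. That is still enough, since \(S\) never contains position \(1\). The law of \(Y_{t_0}\) given \(S\) is therefore invariant under conjugation by permutations of \([n]\setminus(S\cup\{1\})\). The goal is to show it is close to the law \(\mu_S\) of a uniform permutation of \([n]\setminus S\). I would try to express the conditional law via inclusion–exclusion over supersets of \(S\), in terms of walks restricted to subgroups \(\mathfrak S_{[n]\setminus T}\). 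Each of these is again a star walk on a smaller deck, so Step 1 applies to it. I would then control the error from the \(|S|\approx e^{L}\) terms. The balance between this error, of order roughly \(e^{O(L^2)}/\mathrm{poly}\) or \(e^{-cL}\)-type, and the coupon tail \(\exp(-e^{L})\) fixes \(L\approx(\log n)^{1/2}\). This is the source of the exponent \((\log n)^{1/2+o(1)}\).

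\textbf{Step 3 (finishing).} Given \(S\) and \(Y_{t_0}\approx\mu_S\), the continuation until each card of \(S\) is selected is a product of star transpositions. Composing a uniform permutation of \([n]\setminus S\) with the successive insertions of the cards of \(S\) via swaps with the top yields exactly a uniform permutation of \([n]\). Each insertion acts like a step of the top-to-random ``random insertion'' construction of a uniform permutation, and the intervening swaps among already touched cards preserve uniformity on the appropriate cosets. Summing the total variation errors from Steps 1–2 and the event \(\tau<t_0\) then gives the theorem.
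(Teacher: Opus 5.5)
Your three-step architecture is the paper's: the joint spectrum of the star and random-transposition kernels, then inclusion--exclusion over supersets $T\supseteq S$ using that the walk conditioned on $\mathcal G(T)$ is a star walk on $[n]\setminus T$, then running from $\mu_S$ until $\tau$. However, Step~3 is false as stated, and this is where the paper needs a genuine argument.

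\textbf{Step 3.} Starting from $Z_{t_0}\sim\mu_S$, the law at $\tau$ is never uniform. The index $L=I_\tau$ is untouched until time $\tau$, so the final swap $(1,L)$ puts card $L$ on top, and $Z_\tau(1)=L\neq 1$ always; the last card gets no random insertion. There is a second defect. If an untouched index $k$ is selected right after another untouched index $j$ is first touched, card $j$ lands at position $k$ rather than at a uniform touched position. Already for $n=3$ and $S=\{2,3\}$, the law of $Z_\tau$ gives two of the four permutations with $\sigma(1)\neq1$ mass $1/3$ and the other two mass $1/6$. Nor can you argue conditionally on the selection sequence that determines $\tau$: given the selections after $t_0$, the final position of every card of $S$ is deterministic.

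The paper repairs this with a marking process:
\begin{itemize}
\item a card is marked when it leaves the top for a touched position;
\item conditionally only on the untouched-set history, the marked cards form a uniform bijection onto their positions (that history cannot distinguish touched positions, since marked cards never sit in untouched positions);
\item the event $B$ of two consecutive first touches is discarded, at cost $H_n(\ell_n/n)^2=n^{-1+o(1)}$;
\item one then gets $\mathcal L(Z_\tau\mid E)=U_{\mathfrak S_n}(\cdot\mid\sigma(1)\neq1)$, which costs an extra $1/n$.
\end{itemize}

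\textbf{Choice of $L$.} Your scaling of $L$ is also wrong, and with it Steps~1--2 cannot be carried out. The errors there are not $e^{O(L^2)}/\mathrm{poly}(n)$ or $e^{-cL}$. They are exponential in the Poisson parameter $\gamma=e^{L}$, the expected number of untouched cards:
\begin{itemize}
\item the $\mathcal L$-regime spectral sum is controlled only through $\sum_x e^{(3+2L)x}/x!\le\exp(e^{3+2L})$;
\item the superset sum costs a factor $e^{O(\gamma)}$ relative to $\mathbb P(\mathcal F(S))$;
\item the alternating sum needs $(1+2\gamma)^{K_n}\le n^{1/2+o(1)}$.
\end{itemize}
This forces $e^{L}$ to be at most about $(\log n)^{1/2}$, i.e.\ $L\approx\tfrac12\log\log n-4\log\log\log n$, as in the paper's choice of $t^*$. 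At $L\approx(\log n)^{1/2}$ there are about $e^{(\log n)^{1/2}}$ untouched cards, and none of these bounds survive. The rate $\exp(-(\log n)^{1/2+o(1)})$ is simply $\mathbb P(\tau\le t_0)\le e^{-\gamma}$ with $\gamma$ at the edge of this window. It is not the outcome of balancing at $L=(\log n)^{1/2}$.

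\textbf{Strength of Step 1.} Step~1 must deliver a total-variation approximation of $Y_s$ by the explicit fixed-point mixture, with error far below $e^{-C\gamma}$. The paper gets $n^{-1/2+o(1)}$ by comparing $Y_{2t}$ with $X_t$, importing Jain--Sawhney's $X_t\approx\nu_t$, and removing the parity restriction via one-step stability bounds. A chi-square bound against uniform is useless before cutoff. An error decaying only like $e^{-c'}$ in the window parameter would be swamped by the $e^{O(\gamma)}$ factor in Step~2.
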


To prove this result, the random-transposition Fourier argument (see Jain--Sawhney \cite{jain2024hitting}) does not directly extend to star transpositions because the star-transposition measure is not a class function. In the random-transposition setting, conjugacy invariance reduces Fourier coefficients to scalars on irreducible representations via Schur's lemma. For the star shuffle, we instead exploit that the star and random transposition kernels commute and are simultaneously diagonalizable, which lets us compare the two chains at matched deterministic times. This strategy is inspired by the comparison argument of \cite{comparison-nestoridi}. A technical complication is that the comparison is sensitive to parity. The relevant eigenvalue terms may change sign, and the comparison works cleanly only when the two times being matched have the same parity. This leads to an even-time comparison theorem.

\begin{thm}\label{thm:even-time-comparison-thm}
    Let $t=\lfloor \tfrac{n\log n}{2}\rfloor+t'$ be even, with $|t'|\le n\bigl(\tfrac{\log\log n}{4}-\log\log\log n\bigr)$, and set $s:=2t$. Then
    \[d_{\mathrm{TV}}(X_t,Y_s)\le n^{-1/2+o(1)}.\]
\end{thm}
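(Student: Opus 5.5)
The plan is an $\ell^2$ (Plancherel) comparison in a common eigenbasis. Work in the group algebra $\mathbb C[\mathfrak S_n]$, in which the two increment measures are
\[
\rho^\star=\tfrac1n(\mathrm{id}+J_n), \qquad \rho=\tfrac1n\,\mathrm{id}+\tfrac{2}{n^2}\textstyle\sum_{i<j}(i\,j).
\]
The element $\sum_{i<j}(i\,j)=\sum_k J_k$ is central, so $\rho$ commutes with $\rho^\star$. On the irreducible representation $V_\lambda$, take the Gelfand--Tsetlin (Young seminormal) basis $\{v_T\}$, indexed by standard tableaux $T$ of shape $\lambda$. In this basis both Fourier transforms are diagonal:
\[
\hat\rho(\lambda)=r_\lambda I,\quad r_\lambda=\tfrac1n+\tfrac{2}{n^2}\sum_{\square\in\lambda}c(\square), \qquad \hat\rho^\star(\lambda)v_T=a_T v_T,\quad a_T=\tfrac{1+c_T(n)}{n},
\]
where $c_T(n)$ is the content of the box containing $n$. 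Both chains start at $\mathrm{id}$. Plancherel together with Cauchy--Schwarz then gives
\[
4\,d_{\mathrm{TV}}(X_t,Y_s)^2\le n!\,\|\mathcal L(X_t)-\mathcal L(Y_s)\|_2^2=\sum_{\lambda\neq(n)} d_\lambda\sum_{T\in\mathrm{SYT}(\lambda)}\bigl(r_\lambda^{\,t}-a_T^{\,2t}\bigr)^2 .
\]
The trivial representation cancels because both coefficients equal $1$ there. Parity enters at this point. Since $t$ is even, $r_\lambda^t\ge0$, and $a_T^{2t}\ge0$ always. The quantity $r_\lambda$ is negative near $\lambda=(1^n)$, and matching $r_\lambda^t$ against $a_T^{2t}$ only works because both are nonnegative. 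For odd $t$ the terms with large negative eigenvalues would add instead of cancel.

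Split the sum by $k:=n-\max(\lambda_1,\lambda_1')$. By the symmetry $\lambda\leftrightarrow\lambda'$ (contents change sign, and both $r_\lambda^t$ and $a_T^{2t}$ only see the absolute value up to $O(1/n)$ corrections), it suffices to treat $\lambda_1=n-k$. The tableaux split into two types.

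\emph{(a) $n$ lies at the end of the first row.} Then $a_T=1-k/n$, so $a_T^2=1-2k/n+k^2/n^2$. Meanwhile, writing $\lambda=(n-k,\mu)$, the content sum is $\binom{n-k}{2}+O(k^2)$, which gives $r_\lambda=1-2k/n+O(k^2/n^2)$. Hence
\[
|r_\lambda^t-a_T^{2t}|\lesssim \frac{t\,k^2}{n^2}\,(1-2k/n)^t e^{O(k^2\log n/n)}\lesssim \frac{k^2\log n}{n}\, n^{-k}e^{-2kt'/n}.
\]
\emph{(b) $n$ lies elsewhere.} Such $T$ form a fraction $O(k/n)$ of $\mathrm{SYT}(\lambda)$, namely $d_\mu$-type counts against $d_\lambda$. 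For these $a_T\le k/n+O(1/n)$ is tiny, so the term is essentially $r_\lambda^{2t}$.

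Using $d_\lambda\le\binom nk d_\mu\le n^k\sqrt{k!}$-type bounds and $\sum_{|\mu|=k}d_\mu^2=k!$, the contribution at level $k$ is at most
\[
\Bigl(\tfrac{k}{n}+\tfrac{k^4\log^2 n}{n^2}\Bigr)\frac{x^k}{k!}, \qquad x:=e^{-4t'/n}.
\]
The constraint $t'\ge-n(\tfrac{\log\log n}{4}-\log\log\log n)$ gives $x\le \log n/(\log\log n)^4$. Therefore $\sum_k \tfrac{k}{n}\tfrac{x^k}{k!}\le \tfrac{x e^{x}}{n}=n^{-1+o(1)}$, and hence $d_{\mathrm{TV}}\le n^{-1/2+o(1)}$. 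I would first run this computation for $k\le n^{1/3}$, say, where the expansions above are uniform.

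The main obstacle is the uniform control of the remaining range, where $k$ is large and the first-order expansions of $r_\lambda$ and $a_T$ fail. There I would not compare at all. Instead I would bound $\sum d_\lambda\sum_T(r_\lambda^{2t}+a_T^{4t})$ separately by the two chains' own $\ell^2$ distances to uniform restricted to those $\lambda$. For random transpositions this is the Diaconis--Shahshahani tail estimate. For star transpositions I would use the analogous Flatto--Odlyzko--Wales/Diaconis estimate, grouping tableaux by the position of $n$ via the branching rule, so that $\sum_T a_T^{4t}=\sum_{\lambda^-}d_{\lambda^-}\,a^{4t}$ with $\lambda^-=\lambda\setminus\{\text{box of }n\}$. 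Both tails are superpolynomially small once $k\ge n^{1/3}$, given $t\ge \tfrac12 n\log n-\tfrac14 n\log\log n$. Checking that this lower limit on $t'$ still leaves these $\ell^2$ tails summable, and that the near-$(1^n)$ shapes are handled by the parity remark, is where I expect the care to be needed.
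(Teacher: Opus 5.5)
Your argument is correct and is essentially the paper's proof. The Gelfand--Tsetlin basis realizes exactly the joint eigenspaces indexed by $(\lambda,\lambda^{(i)})$ with multiplicity $d_\lambda d_{\lambda^{(i)}}$ (tableaux grouped by the box containing $n$), and your split is the paper's $\mathcal L$/$\mathcal S$ analysis: the first-row corner matched by $s=2t$, the other corners carrying a fraction $O(k/n)$, and the bulk controlled by the two chains' separate $\ell^2$ tails. Your counting is slightly sharper, using $\sum_{\mu\vdash k}d_\mu^2=k!$ in place of $p(x)$ and of the bound $d_{\lambda^{(i)}}\le\frac{i\lambda_i}{n}d_\lambda$.

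The one inaccurate step is the appeal to symmetry for shapes near $(1^n)$. There $|r_\lambda|^t\approx e^{-2t(k+1)/n}$ while $|a_T|^{2t}\le e^{-2t(k+2)/n}$, so no cancellation occurs, and none is needed; your explanation of the role of parity is therefore off. The term is simply at most $(|r_\lambda|^t+|a_T|^{2t})^2$. The extra factor $e^{-4t/n}=n^{-2+o(1)}$ produced by the $2/n$ shift then makes this region contribute $n^{-2+o(1)}$, exactly as in the paper's $\mathcal L'$ lemma.
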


We then use stability estimates to move from this parity-restricted statement to general nearby times. Combining the restricted parity statement, the stability results, and the discrete time approximation of \(X_t\) by \(\nu_t\) \cite{jain2024hitting}, we obtain a discrete time approximation for the star transposition shuffle (Corollary~\ref{cor:final-tv-distance-star-walk}). To state it, we introduce some notation (notation is drawn from \cite{jain2024hitting}).

Let 
\begin{equation}\label{eq:def-t-n}
    t_n = \lfloor (n \log n)/2 \rfloor, \quad t = t_n + n c_n(t)
\end{equation}
where \(nc_n(t) \in \mathbb{Z}\). Define
\begin{equation}\label{eq:def-gamma-n-t}
\gamma_{n, t} = e^{-2(t-t_n)/n} = e^{-2 c_n(t)}.
\end{equation}
For star-shuffle times, we use the natural normalization
\begin{equation}\label{eq:def-star-window}
    s_n:=\lfloor n\log n\rfloor,\qquad
    c_n^\star(s):=\frac{s-s_n}{n}.
\end{equation}

\begin{defn}\label{def:nu-t}
    Let \(\nu_t\) be the probability measure on \(\mathfrak{S}_n\) generated as follows. First, sample \(M_t \in \{0, 1, \ldots, n\}\) from the truncated Poisson law \[\bbP(M_t = x) = \frac{\bbP(\mathrm{Pois}(\gamma_{n, t}) = x)}{\bbP(\mathrm{Pois}(\gamma_{n, t}) \leq n)}.\] Then, choose a uniformly random subset \(S_t \subset [n]\) of size \(M_t\). Finally, sample a uniformly random permutation on \([n]\setminus S_t\) and extend it to an element of \(\mathfrak{S}_n\) by fixing every element of \(S_t\).
\end{defn}

\begin{cor}\label{cor:final-tv-distance-star-walk}
    Let \(s\in\mathbb Z\) satisfy
    \[
        |c_n^\star(s)|\le \frac12\log\log n-2\log\log\log n,
    \]
    and set \(t:=\lceil s/2\rceil\). Then
    \[d_{\mathrm{TV}}(Y_s,\nu_t)\le n^{-1/2+o(1)}.\]
\end{cor}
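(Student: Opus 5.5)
Write $d_{\mathrm{TV}}(Y_s,\nu_t)$ with the triangle inequality as
\[
d_{\mathrm{TV}}(Y_s,\nu_t)\le d_{\mathrm{TV}}(Y_s,Y_{s'})+d_{\mathrm{TV}}(Y_{s'},X_{t'})+d_{\mathrm{TV}}(X_{t'},\nu_{t'})+d_{\mathrm{TV}}(\nu_{t'},\nu_t),
\]
where $t'$ is an \emph{even} integer with $|t'-t|\le 1$ and $s':=2t'$, so that $|s'-s|\le 3$. The middle term is handled by Theorem~\ref{thm:even-time-comparison-thm}. For this we check the window condition. Since $t'=s'/2$ and $s_n=\lfloor n\log n\rfloor$, we get $t'-t_n = (s-s_n)/2+O(1)=n c_n^\star(s)/2+O(1)$. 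The hypothesis $|c_n^\star(s)|\le \tfrac12\log\log n-2\log\log\log n$ then gives $|t'-t_n|\le n(\tfrac14\log\log n-\log\log\log n)+O(1)$. The $O(1)$ slack is harmless: if needed, shift $t'$ by $2$ toward $t_n$, which costs only another $O(1)$ in the first and last terms. The third term is the deterministic-time approximation of \cite{jain2024hitting}, giving $d_{\mathrm{TV}}(X_{t'},\nu_{t'})\le n^{-1/2+o(1)}$ in this window. We should confirm that their window contains $|c_n(t')|\le \tfrac14\log\log n$; if it does not, we would use whichever polynomial bound they give there.

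The last term is a direct Poisson computation. $\nu_t$ and $\nu_{t'}$ differ only through the law of $M$, since conditional on $M=m$ both are the same mixture. Hence $d_{\mathrm{TV}}(\nu_t,\nu_{t'})\le d_{\mathrm{TV}}(\mathrm{Pois}(\gamma_{n,t}),\mathrm{Pois}(\gamma_{n,t'}))+o(n^{-1})$, where the second term is the truncation error. We have $\gamma_{n,t'}/\gamma_{n,t}=e^{O(1/n)}$, so this is $O(\gamma_{n,t}/n)$. In the window, $\gamma_{n,t}\le e^{\frac12\log\log n}=(\log n)^{1/2}$, so the term is $n^{-1+o(1)}$.

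The first term is the stability step and is the main obstacle. Showing $d_{\mathrm{TV}}(Y_s,Y_{s+k})\le n^{-1/2+o(1)}$ for $|k|\le 3$ needs care because the chain is periodic-like: the sign of the permutation is not lazy-balanced. Each step is a non-identity transposition with probability $1-1/n$, so $Y_s$ and $Y_{s+1}$ have nearly opposite parity biases. Near time $n\log n$, however, the sign bias is $(1-2/n)^s\approx n^{-2}$, which is negligible. I would couple the two chains by inserting extra steps and argue as follows. Conditional on the touched-set configuration, the law of $Y_s$ is nearly invariant under one more star transposition. Concretely, I would reuse the stability estimates referred to after Theorem~\ref{thm:even-time-comparison-thm}. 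Failing that, I would apply the triangle inequality through $X$: compare $Y_{s}$ with $Y_{s'}$ by writing both as $\approx\nu$ at even times $s'$ and $s'\pm2$, using Theorem~\ref{thm:even-time-comparison-thm} twice. This leaves only one odd shift, $Y_{s}$ versus $Y_{s\pm1}$. For that shift, I would use the fact that $\nu_t$ is invariant, up to $O(\gamma/n)$ error, under right multiplication by a uniform star transposition. Multiplying the approximation $Y_{s-1}\approx\nu$ on the right by $(1,I_s)$ preserves total variation, so this gives $Y_s\approx\nu\cdot\rho^\star\approx\nu$. Combining the four bounds yields $n^{-1/2+o(1)}$.
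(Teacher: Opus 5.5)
Your four-term decomposition through an even $t'$ is exactly the paper's. Your handling of the middle two terms and of $d_{\mathrm{TV}}(\nu_{t'},\nu_t)$ is fine. The gap is the first term, $d_{\mathrm{TV}}(Y_s,Y_{s'})$, which you correctly flag as the crux but never establish. ``Reuse the stability estimates'' is a deferral, and near-invariance ``conditional on the touched set'' is a heuristic, not an argument.

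Your fallback fails as written, for two reasons.
\begin{itemize}
\item Theorem~\ref{thm:even-time-comparison-thm} requires $t$ even. So it gives $Y_{s'}\approx X_{s'/2}$ only when $4\mid s'$. It says nothing at $s'\pm2=2(t'\pm1)$, where $t'\pm1$ is odd, so you cannot apply it ``twice''.
\item Your right-multiplication argument is valid but only runs forward in time. Conditional on $S_t$ and $I$ with $1,I\notin S_t$, right multiplication by $(1,I)$ preserves the uniform law on $\mathfrak S_{[n]\setminus S_t}$. By contraction this gives $d_{\mathrm{TV}}(Y_{r+1},\nu_t)\le d_{\mathrm{TV}}(Y_r,\nu_t)+2\mathbb E[M_t]/n$, but nothing about $Y_{r-1}$.
\end{itemize}
Your choice $|t'-t|\le1$ permits $s'>s$. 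For instance, if $s$ is odd and $t=\lceil s/2\rceil$ is even, then $t'=t$ and $s'=s+1$. You would then need $Y_{s-1}\approx\nu$ at $s-1=2(t'-1)$, where neither tool applies.

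The paper closes this step with Lemma~\ref{lem:one-step-comparison-star}. Applying Lemma~\ref{lem:bounding-TV-eigen} to $Q^s$ versus $Q^{s+1}$ reduces it to bounding $\sum_\lambda d_\lambda\sum_{i\in I(\lambda)}d_{\lambda^{(i)}}\beta_{\lambda^{(i)}}^{2s}(1-\beta_{\lambda^{(i)}})^2$ over $\mathcal S,\mathcal L,\mathcal L'$. The only non-negligible pieces are:
\begin{itemize}
\item the first-row corners in $\mathcal L$, where $(1-\beta)^2=(x/n)^2$ supplies a factor $n^{-2}$;
\item the near-sign eigenvalues $\beta=-(1-\frac{2+y}{n})$ in $\mathcal L'$, where $\beta^{2s}=n^{-4+o(1)}$. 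This is the rigorous form of your sign-bias remark.
\end{itemize}
This bound is symmetric in direction, so the paper's even $2\widehat t$ may lie on either side of $s$.

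Your coupling could replace that lemma with a pleasant non-spectral argument. To do so, take $t'$ even with $2t'\le s$ and step forward a bounded number of times. One caveat remains. Near the lower end of the window, the requirement $2t'\le s$ can push $c_n(t')$ below $-(\frac14\log\log n-\log\log\log n)$ by $O(1/n)$. That lies outside the stated ranges of Theorem~\ref{thm:even-time-comparison-thm} and Theorem~\ref{thm:discrete-time-approx-random-trans}. There you would still need either $O(1)$ slack in those results or a two-sided stability estimate like the paper's.
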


This corollary should be regarded as a quantitative strengthening of the cutoff-profile theorem for star transpositions from \cite{comparison-nestoridi}. The result compares the full law of the walk to an explicit measure \(\nu_t\). We not only identify the asymptotic distance to stationarity, but we also give an explicit distribution which approximates the walk itself. As in Jain--Sawhney's strengthening of Teyssier's theorem for random transpositions, the cutoff profile is then recovered from this finer approximation by the triangle inequality.

Finally, to deduce the hitting-time theorem, we follow the general blueprint of Jain and Sawhney. The deterministic-time approximation by \(\nu_t\) is bootstrapped into a statement that the untouched set is an approximate sufficient statistic: conditioned on the untouched cards, the deck is close to uniform on permutations fixing those cards. This conditional description is then merged with a strong-uniform-time argument to prove the hitting-time mixing result (Theorem~\ref{thm:hitting-time-mixing-star}).

\section{Deterministic-time approximation for star transposition walk} \label{sec:deterministic-time-approx-for-star}
This section proves a deterministic time approximation for the star transposition walk. Our goal is to compare the law of the star walk \(Y_s\) at the relevant time scale to the approximation \(\nu_t\) arising from the random transposition setting. The proof has three steps. First, using a comparison lemma together with simultaneous diagonalization of the random transposition and star transposition kernels, we prove an even-time comparison between \(X_t\) and \(Y_{2t}\). Second, we remove the parity restriction by proving stability estimates for both the model law \(\nu_t\) and the star walk under small changes in time. Finally, combining these ingredients with Jain-Sawhney's deterministic-time approximation for random transpositions yields the desired approximation for $Y_s$.

\subsection{Spectral set-up} \label{subsection:spectral-set-up}

\subsubsection{Comparison lemma}
Let \((X_r)_{r\ge0}\) and \((Y_r)_{r\ge0}\) be two irreducible, reversible Markov chains on the same finite state space \(\mathcal S\), both with stationary distribution \(\pi\). We compare their distributions at times \(t\) and \(s\) by viewing their one-step transition kernels as matrices, $P$ and $Q$ respectively.
\begin{lem}[{\cite[Lemma 1.4]{comparison-nestoridi}}] \label{lem:bounding-TV-eigen} Let \(P\) and \(Q\) be the transition matrices of two reversible Markov chains \(X_t\) and \(Y_s\) on a finite state space \(\cS\) that share the same eigenbasis and stationary measure \(\pi\). Let \(\varphi_i: \cS \to \bbC\) be a common orthonormal eigenbasis of \(\ell^2(\pi)\), with inner product
$\langle f,g\rangle_\pi:=\sum_{x\in\cS}f(x)\overline{g(x)}\pi(x),$
satisfying \[P \varphi_i = \alpha_i \varphi_i, \quad \text{and} \quad Q \varphi_i = \beta_i \varphi_i.\]
For every \(x \in \cS\), and \(t, s \geq 0\), we have \[4\|P^t(x,\cdot)-Q^{s}(x,\cdot)\|_{TV}^2
\;\le\;
\sum_{i\ge2}^{|\cS|} |\varphi_i(x)|^2\bigl(\alpha_i^{t}-\beta_i^{s}\bigr)^2.
\]
If $P$ and $Q$ are random walks on a group $S$, then
\[4\|P^t(x,\cdot)-Q^{s}(x,\cdot)\|_{TV}^2
\;\le\;
\sum_{i\ge2}^{|\cS|} \bigl(\alpha_i^{t}-\beta_i^{s}\bigr)^2. 
\] 
\end{lem}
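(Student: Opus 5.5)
The statement to prove is the comparison lemma (Lemma~\ref{lem:bounding-TV-eigen}). The plan is the standard $\ell^2$ argument: bound total variation by the $\ell^2(\pi)$ norm of a density difference, then expand that density in the common eigenbasis. Fix $x\in\cS$ and set
\[
f(y):=\frac{P^t(x,y)-Q^s(x,y)}{\pi(y)} .
\]
Then $2\|P^t(x,\cdot)-Q^s(x,\cdot)\|_{TV}=\sum_y|f(y)|\pi(y)=\|f\|_{1,\pi}$. By Cauchy--Schwarz with respect to the probability measure $\pi$, $\|f\|_{1,\pi}\le \|f\|_{2,\pi}$. Squaring gives $4\|\cdot\|_{TV}^2\le \|f\|_{2,\pi}^2$, so it suffices to compute $\|f\|_{2,\pi}^2$.

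Next, expand $f$ in the orthonormal basis $\{\varphi_i\}$. Reversibility gives the kernel expansion
\[
\frac{P^t(x,y)}{\pi(y)}=\sum_i \alpha_i^t\,\varphi_i(x)\overline{\varphi_i(y)} .
\]
To see this, note that $P$ is self-adjoint on $\ell^2(\pi)$ with spectral decomposition $P^t=\sum_i\alpha_i^t\varphi_i\langle\cdot,\varphi_i\rangle_\pi$. Applying it to $\delta_y$ gives $P^t(x,y)=\sum_i\alpha_i^t\varphi_i(x)\overline{\varphi_i(y)}\pi(y)$. The same holds for $Q$ with $\beta_i$, and this is where the shared eigenbasis is used. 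Hence
\[
f=\sum_i(\alpha_i^t-\beta_i^s)\varphi_i(x)\overline{\varphi_i} .
\]
Take $\varphi_1\equiv 1$ with $\alpha_1=\beta_1=1$; this is valid because both chains are stochastic and irreducible with stationary measure $\pi$. Then the $i=1$ term vanishes. Parseval then yields
\[
\|f\|_{2,\pi}^2=\sum_{i\ge2}|\varphi_i(x)|^2(\alpha_i^t-\beta_i^s)^2 .
\]
The eigenvalues are real by reversibility, so squaring is legitimate. The $\overline{\varphi_i}$ still form an orthonormal system, so Parseval applies to them. This proves the first inequality.

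For the group case, $\pi$ is uniform and both walks are translation invariant, so the total variation distance does not depend on the starting point $x$. Averaging the first inequality over $x$ with respect to $\pi$ gives
\[
4\|\cdot\|_{TV}^2\le\sum_{i\ge2}(\alpha_i^t-\beta_i^s)^2\sum_x|\varphi_i(x)|^2\pi(x)=\sum_{i\ge2}(\alpha_i^t-\beta_i^s)^2,
\]
using $\|\varphi_i\|_{2,\pi}=1$. The only delicate point is this invariance. It holds because $P^t(gx,gy)=P^t(x,y)$ for walks of the form $\rho(x^{-1}y)$, and likewise for $Q$, so $y\mapsto gy$ carries one TV difference to the other. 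The remainder is routine bookkeeping with conjugates in the complex eigenbasis.
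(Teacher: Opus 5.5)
Your proof is correct and is the standard $\ell^2(\pi)$ argument (Cauchy--Schwarz, spectral expansion of both kernels in the common eigenbasis, Parseval, then translation invariance to average over $x$) behind \cite[Lemma 1.4]{comparison-nestoridi}, which the paper imports without proof; your averaging step uses the same transitivity the paper invokes in Corollary~\ref{cor:spectral-comparison-Sn}. One small point: the eigenbasis is given, so you cannot literally choose $\varphi_1\equiv 1$, but irreducibility forces $\varphi_1$ to be a unimodular constant with $\alpha_1=\beta_1=1$, so the $i=1$ term vanishes anyway.
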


\subsubsection{Representation-theoretic preliminaries}
For the two walks under consideration, \(P\) is the random-transposition kernel and \(Q\) is the star-transposition kernel. Both have stationary distribution \(U_{\mathfrak S_n}\), and both are reversible with respect to it. Hence the associated operators are self-adjoint on \(\ell^2(U_{\mathfrak S_n})\). Thus, to apply Lemma~\ref{lem:bounding-TV-eigen}, it suffices to verify that \(P\) and \(Q\) commute; in that case, they admit a common orthonormal eigenbasis.

\begin{lem} [{\cite[Lemma 4.8]{comparison-nestoridi}}] \label{lem:commuting-matrices}
    Let \(G\) be a finite group, and \(\mu, \nu\) probability measures on \(G\) with \(\mu\) constant on conjugacy classes. Define two transition kernels on \(G\) by \(P(x, y) = \mu(x^{-1}y)\) and \(Q(x, y) = \nu(x^{-1}y)\). If \(\nu\) is symmetric, then \(P\) and \(Q\) commute. 
\end{lem}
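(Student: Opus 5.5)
The plan is to compute both products $PQ$ and $QP$ as convolutions and show they coincide, using the conjugacy invariance of $\mu$ to move elements past each other. I will write everything in terms of convolution of measures on $G$. For any kernels of the form $P(x,y)=\mu(x^{-1}y)$ and $Q(x,y)=\nu(x^{-1}y)$,
\[
(PQ)(x,y)=\sum_{z\in G}\mu(x^{-1}z)\,\nu(z^{-1}y)=\sum_{g\in G}\mu(g)\,\nu(g^{-1}x^{-1}y)=(\mu*\nu)(x^{-1}y),
\]
where $(\mu*\nu)(h):=\sum_{g}\mu(g)\nu(g^{-1}h)$. In the same way, $(QP)(x,y)=(\nu*\mu)(x^{-1}y)$. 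It therefore suffices to show $\mu*\nu=\nu*\mu$ as functions on $G$.

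Next I use that $\mu$ is a class function. Fix $h\in G$ and write
\[
(\nu*\mu)(h)=\sum_{g}\nu(g)\,\mu(g^{-1}h).
\]
Since $\mu$ is constant on conjugacy classes, $\mu(g^{-1}h)=\mu\bigl(g(g^{-1}h)g^{-1}\bigr)=\mu(hg^{-1})$. Substituting $k=hg^{-1}$, so that $g=k^{-1}h$, gives
\[
(\nu*\mu)(h)=\sum_{k}\mu(k)\,\nu(k^{-1}h)=(\mu*\nu)(h).
\]
Hence $PQ=QP$.

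This argument uses only conjugacy invariance, not the symmetry of $\nu$. I expect symmetry enters the lemma's intended use rather than the commutation itself. It makes $Q$ self-adjoint on $\ell^2(U_G)$, and the commuting self-adjoint pair $P,Q$ then admits a common orthonormal eigenbasis, as required in Lemma~\ref{lem:bounding-TV-eigen}. For $P$, self-adjointness holds when $\mu$ is also symmetric, which is the case for random transpositions.

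There is no real obstacle. The only point requiring care is bookkeeping: keep the left-multiplication convention $P(x,y)=\mu(x^{-1}y)$ consistent, so that $PQ$ corresponds to $\mu*\nu$ in that order rather than $\nu*\mu$.
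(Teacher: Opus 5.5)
Your proof is correct and complete. Rewriting $PQ$ and $QP$ as $(\mu*\nu)(x^{-1}y)$ and $(\nu*\mu)(x^{-1}y)$, and then using $\mu(g^{-1}h)=\mu(hg^{-1})$ together with the substitution $k=hg^{-1}$, is the standard fact that a class function is central in the group algebra. The paper gives no proof of its own for this lemma; it is imported from \cite[Lemma 4.8]{comparison-nestoridi}, so there is no in-paper argument to compare against.

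You are right that the symmetry of $\nu$ is not needed for commutation. In the paper it is used only afterwards, to make $Q$ self-adjoint on $\ell^2(U_{\mathfrak S_n})$. Together with commutation, that self-adjointness yields a common orthonormal eigenbasis, as in the remark following the lemma.

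One small addition to your last paragraph: in $\mathfrak S_n$ every element is conjugate to its inverse. Hence every class function on $\mathfrak S_n$, in particular $\rho$, is automatically symmetric, and the self-adjointness of $P$ comes for free.
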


Since $\rho$ is class-invariant and $\rho^\star$ is symmetric, Lemma~\ref{lem:commuting-matrices} implies that $P$ and $Q$ commute. It follows that $P$ and $Q$ share a common orthonormal eigenbasis. 
In our setting, the joint eigenspaces admit an explicit representation-theoretic description; we record this in Lemma~\ref{lem:common-eigenbasis}. 
To state it, we introduce the necessary notation.

A partition of $n$ is a sequence $\lambda=(\lambda_1,\ldots,\lambda_k)$, written $\lambda\vdash n$, with $\lambda_1\ge \lambda_2\ge \cdots \ge \lambda_k>0$ and $\sum_{i=1}^k \lambda_i=n$. We adopt the convention that $\lambda_i=0$ for $i>k$.
Every partition $\lambda$ determines a Young diagram with $\lambda_i$ boxes in row $i$. 
Let $\lambda'$ be the transpose partition, whose $i$th part $\lambda'_i$ equals the number of boxes in the $i$th column of the Young diagram of $\lambda$.
A standard Young tableau of shape $\lambda$ is a filling of the Young diagram of $\lambda$ with $[n]$, where each element is used exactly once and entries increase strictly along rows (left to right) and columns (top to bottom).
Let $d_\lambda$ denote the number of standard Young tableaux of shape $\lambda$.

The eigenvalues of \(P\) are indexed by partitions \(\lambda\vdash n\), and their multiplicities are expressed in terms of the dimensions \(d_\lambda\). We now record the eigenvalue formulas for \(P\) and gather the accompanying bounds and multiplicity estimates that will be used later. Some of the lemmas below include multiple citations: in these cases, the notation and formulation are often taken from the first one or two sources, while the proof of the original statement appears in the final cited reference.

\begin{lem}[{\cite[Lemma 4.5]{comparison-nestoridi}}, {\cite[Lemma 3.2]{jain2024hitting}}, {\cite[Lemma 7]{diaconis1981generating}}] \label{lem:eigenvalue-transp-shuffle}
    Let \(\lambda \vdash n\) with \(\lambda = (\lambda_1, \ldots, \lambda_k)\). 
    The eigenvalues of \(P\), the transition kernel associated to the random transposition walk, are given by \[\alpha_\lambda = \frac{1}{n} + \frac{n-1}{n} \cdot \binom{n}{2}^{-1} \sum_{i\ge 1} \left[ \binom{\lambda_i}{2} - \binom{\lambda_i'}{2} \right].\] 
    If \(\lambda_1 \geq n - (\log n)^2 \), then 
    \[\alpha_\lambda = 1 - \frac{2(n - \lambda_1)}{n} + O(n^{-2+o(1)}).\]
    The spectral block indexed by \(\lambda\) has dimension \(d_\lambda^2\); equivalently, it contributes \(d_\lambda^2\) copies of \(\alpha_\lambda\) to the spectral multiset.
\end{lem}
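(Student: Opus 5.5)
This lemma collects three classical facts: the eigenvalue formula, its expansion for $\lambda_1$ close to $n$, and the multiplicity. I would prove each in turn from the Fourier analysis of the class function $\rho$.

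\emph{Eigenvalue formula.} Since $\rho$ is constant on conjugacy classes, Schur's lemma shows that for each irreducible representation $V_\lambda$ the Fourier transform $\hat\rho(\lambda)=\sum_\sigma\rho(\sigma)\lambda(\sigma)$ is a scalar multiple of the identity. Taking traces gives
\[
\hat\rho(\lambda)=\Bigl(\tfrac1n+\tfrac{n-1}{n}\,\tfrac{\chi_\lambda(\tau)}{d_\lambda}\Bigr)I,
\]
where $\tau$ is any transposition and $\binom n2\cdot\frac{2}{n^2}=\frac{n-1}{n}$. Frobenius' formula for the normalized character at a transposition then finishes this part:
\[
\frac{\chi_\lambda(\tau)}{d_\lambda}=\binom n2^{-1}\sum_i\Bigl[\binom{\lambda_i}{2}-\binom{\lambda_i'}{2}\Bigr].
\]
This formula can also be obtained via Jucys--Murphy elements, since $\sum_k J_k$ is the sum of all transpositions and acts on Young's basis by the total content $\sum_{\text{boxes}}(j-i)$. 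This content sum equals $\sum_i\binom{\lambda_i}{2}-\sum_j\binom{\lambda'_j}{2}$.

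\emph{Multiplicity.} The regular representation decomposes as $\ell^2(\mathfrak S_n)\cong\bigoplus_\lambda V_\lambda^{\oplus d_\lambda}$, equivalently via the isotypic decomposition into matrix-coefficient functions $\sigma\mapsto\lambda(\sigma)_{ij}$. The kernel $P$ acts on the $\lambda$-isotypic block, which has dimension $d_\lambda^2$, by right multiplication by $\hat\rho(\lambda)$. By the previous step this is the scalar $\alpha_\lambda$, so $\alpha_\lambda$ appears $d_\lambda^2$ times.

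\emph{Asymptotics.} Set $m=n-\lambda_1\le(\log n)^2$.
\begin{itemize}
\item \textbf{First row.} $\binom{\lambda_1}{2}=\binom n2-m(n-1)+\binom{m}{2}$.
\item \textbf{Remaining rows.} Since $\sum_{i\ge2}\lambda_i=m$, we have $0\le\sum_{i\ge2}\binom{\lambda_i}{2}\le\binom m2$.
\item \textbf{Columns.} Every column other than the first $m$ has length $1$, and columns have length at most $m+1$. Hence $\sum_j\binom{\lambda'_j}{2}\le\binom{m+1}{2}\cdot$(bounded), and in any case it is $O(m^2)$ because $\sum_j(\lambda'_j-1)=m$.
\end{itemize}
Combining these,
\[
\binom n2^{-1}\sum_i[\cdots]=1-\frac{2m}{n}+O\!\left(\frac{m^2}{n^2}\right).
\]
Multiplying by $\frac{n-1}{n}=1-\frac1n$ and adding $\frac1n$ gives
\[
\alpha_\lambda=1-\frac{2m}{n}+\frac{2m}{n^2}+O\!\left(\frac{m^2}{n^2}\right)=1-\frac{2m}{n}+O\!\left(n^{-2}(\log n)^4\right),
\]
which is $O(n^{-2+o(1)})$ as claimed.

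The only nontrivial input is Frobenius' character formula at a transposition, which I would cite rather than reprove. The rest is routine bookkeeping; the one point needing care is confirming that the error is uniform over all $\lambda$ with $\lambda_1\ge n-(\log n)^2$.
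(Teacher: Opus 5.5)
Your proposal is correct and follows the standard argument behind the cited sources; the paper itself gives no proof and defers to Diaconis--Shahshahani. The steps match: Schur's lemma reduces $\hat\rho(\lambda)$ to the scalar $\frac1n+\frac{n-1}{n}\chi_\lambda(\tau)/d_\lambda$ for a transposition $\tau$, Frobenius' formula (equivalently the content sum via Jucys--Murphy elements) gives the stated expression, the matrix-coefficient decomposition gives multiplicity $d_\lambda^2$, and your expansion with $m=n-\lambda_1\le(\log n)^2$ gives the uniform error $O((\log n)^4/n^2)$. The one cosmetic fix is in the column estimate: $\sum_j(\lambda'_j-1)=m$ and $\lambda'_j\le m+1$ give directly $\sum_j\binom{\lambda'_j}{2}\le\binom{m+1}{2}$, so the vague ``(bounded)'' factor can be dropped.
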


Let $\lambda\vdash n$. If the rightmost box in row $i$ of the Young diagram of $\lambda$ is a corner, let $\lambda^{(i)}\vdash (n-1)$ denote the partition obtained by removing the corner box in row $i$. The eigenvalues of $Q$ are naturally indexed by such corner removals $\lambda^{(i)}$.

\begin{lem}[{\cite[Lemmas 5.1 and 5.4]{comparison-nestoridi}}, {\cite[Theorems 3.6 and 3.7]{flatto1985random}}]\label{lem:eigenvalues-star-transp}
    Let $\lambda\vdash n$, and let $i$ be a row whose rightmost box is a corner. Then $Q$, the star-transposition kernel on $\mathfrak S_n$, has an eigenvalue
    \[ \beta_{\lambda^{(i)}}=\frac{1}{n}\,(\lambda_i-i+1) \]
    and the spectral block indexed by $(\lambda,\lambda^{(i)})$ has dimension $d_\lambda\,d_{\lambda^{(i)}}$. Moreover, if $i>1$, then
    \[-\frac{n-\lambda_1}{n}\le \beta_{\lambda^{(i)}}\le \frac{\lambda_1}{n}.\]
\end{lem}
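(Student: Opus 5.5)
The plan is to realize $Q$ as right multiplication by an element of the group algebra, and then diagonalize that element using the branching rule from $\mathfrak S_n$ to $\mathfrak S_{n-1}$, in the spirit of Jucys--Murphy elements.

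\textbf{Setting up.} Since $Q(x,y)=\rho^\star(x^{-1}y)$, the operator $Q$ acting on functions is the right convolution by $\rho^\star=\frac1n\bigl(\mathrm{id}+\sum_{j\ge2}(1\,j)\bigr)$ inside $\mathbb C[\mathfrak S_n]$. Conjugating by a fixed relabeling permutation sending $1\mapsto n$ does not change the spectrum or the multiplicities. After this conjugation the element becomes $\frac1n(1+J_n)$, where $J_n=\sum_{j<n}(j\,n)$.

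By Wedderburn/Peter--Weyl, $\mathbb C[\mathfrak S_n]\cong\bigoplus_{\lambda\vdash n}V_\lambda^{\oplus d_\lambda}$ as a right module. So it suffices to diagonalize $J_n$ on a single irreducible $V_\lambda$; each resulting eigenspace is then counted $d_\lambda$ times.

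\textbf{Key step: $J_n$ acts by contents.} Write $J_n=C_n-C_{n-1}$, where $C_m$ denotes the sum of all transpositions in $\mathfrak S_m$.
\begin{itemize}
\item $C_n$ is central in $\mathbb C[\mathfrak S_n]$. By Schur's lemma it acts on $V_\lambda$ by the scalar $\sum_i\bigl[\binom{\lambda_i}{2}-\binom{\lambda_i'}{2}\bigr]$, which equals the sum of contents $c(\lambda)$ of the boxes of $\lambda$. This is exactly the scalar behind Lemma~\ref{lem:eigenvalue-transp-shuffle}.
\item By the branching rule, $\mathrm{Res}^{\mathfrak S_n}_{\mathfrak S_{n-1}}V_\lambda=\bigoplus_i V_{\lambda^{(i)}}$ is multiplicity free, the sum running over removable corners $i$.
\item $C_{n-1}$ is central in $\mathbb C[\mathfrak S_{n-1}]$, so it acts on the summand $V_{\lambda^{(i)}}$ by the scalar $c(\lambda^{(i)})$.
\end{itemize}
Hence $J_n$ acts on $V_{\lambda^{(i)}}$ by $c(\lambda)-c(\lambda^{(i)})$. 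This is the content of the removed box, namely $\lambda_i-i$. Consequently $\frac1n(1+J_n)$ has eigenvalue $\frac1n(\lambda_i-i+1)=\beta_{\lambda^{(i)}}$ on a subspace of dimension $d_{\lambda^{(i)}}$ inside each copy of $V_\lambda$. Summing over the $d_\lambda$ copies gives the block of dimension $d_\lambda d_{\lambda^{(i)}}$ indexed by $(\lambda,\lambda^{(i)})$.

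\textbf{The bounds for $i>1$.} These are elementary.
\begin{itemize}
\item Upper bound: $\lambda_i-i+1\le\lambda_1-1<\lambda_1$.
\item Lower bound: the row index satisfies $i\le\lambda_1'\le n-\lambda_1+1$, since the first column has at most $1+(n-\lambda_1)$ boxes. Together with $\lambda_i\ge1$ this gives $\lambda_i-i+1\ge 2-i\ge 1-(n-\lambda_1)\ge-(n-\lambda_1)$.
\end{itemize}
Dividing by $n$ gives $-\frac{n-\lambda_1}{n}\le\beta_{\lambda^{(i)}}\le\frac{\lambda_1}{n}$.

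\textbf{Expected obstacle.} The only delicate point is bookkeeping. One must check that the left/right conventions and the relabeling $1\leftrightarrow n$ really turn $Q$ into right multiplication by $\frac1n(1+J_n)$ on the regular representation. One must also check that the multiplicity-free branching rule gives $d_\lambda d_{\lambda^{(i)}}$ exactly, rather than merely an upper bound. This follows from $\sum_i d_\lambda d_{\lambda^{(i)}}=d_\lambda^2$, which is the branching identity.
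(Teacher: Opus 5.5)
Your proposal is correct, and it is essentially the argument behind the results the paper cites; the paper gives no proof of its own, deferring to Flatto--Odlyzko--Wales and Nestoridi. There, as in your proposal, the star element is written as the difference of the transposition class sums of $\mathfrak S_n$ and $\mathfrak S_{n-1}$, so that Schur's lemma and the multiplicity-free branching rule make it act on the $\lambda^{(i)}$-component of each copy of $V_\lambda$ by the content $\lambda_i-i$ of the removed box, and the bounds for $i>1$ follow from the elementary estimates you give, $\lambda_i\le\lambda_1$ and $i\le\lambda_1'\le n-\lambda_1+1$.
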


The next lemma gives an explicit parametrization of the joint eigenspaces (and hence of a common eigenbasis) for \(P\) and \(Q\). This allows us to compare \(Y_s\) and \(X_t\). 

\begin{lem}[{\cite[Lemma 5.2]{comparison-nestoridi}}]\label{lem:common-eigenbasis}
    Let \(\lambda \vdash n\) and let \(I(\lambda) = \{i:\lambda_i > \lambda_{i+1}\}\) be the set of rows whose rightmost box is a corner. Let \(P\) be the transition kernel for the random transposition walk, and let \(Q\) be the transition kernel for the star-transposition walk. For each \(i \in I(\lambda)\), there is a joint eigenspace of \(P\) and \(Q\) indexed by \((\lambda, \lambda^{(i)})\), on which \(P\) acts by \(\alpha_\lambda\) and \(Q\) acts by \(\beta_{\lambda^{(i)}}\). The joint eigenspace has dimension \(d_\lambda d_{\lambda^{(i)}}\).
\end{lem}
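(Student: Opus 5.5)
The plan is to decompose $\ell^2(U_{\mathfrak S_n})$ by Peter--Weyl and to diagonalize $Q$ inside each isotypic block via the branching rule from $\mathfrak S_n$ to $\mathfrak S_{n-1}$.

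\textbf{Operators as right convolutions.} For a kernel $K(x,y)=\mu(x^{-1}y)$ we have
\[
Kf(x)=\sum_{g}\mu(g)f(xg),
\]
so $K$ acts through the right regular representation $R(\mu)=\sum_g\mu(g)R(g)$. By Peter--Weyl,
\[
\mathbb C[\mathfrak S_n]\cong\bigoplus_{\lambda\vdash n}V_\lambda\otimes V_\lambda^*,
\]
where the right action affects only one tensor factor. Hence on the $\lambda$-block, of total dimension $d_\lambda^2$, the operator $K$ acts as $d_\lambda$ copies of $\lambda(\mu)=\sum_g\mu(g)\lambda(g)$ acting on $V_\lambda$. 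Because both $\rho$ and $\rho^\star$ are symmetric, each $\lambda(\mu)$ is self-adjoint, which is consistent with the orthonormal eigenbasis in Lemma~\ref{lem:bounding-TV-eigen}. It therefore suffices to find a joint eigendecomposition of $\lambda(\rho)$ and $\lambda(\rho^\star)$ on $V_\lambda$, and then multiply each dimension by $d_\lambda$.

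\textbf{Action of $P$.} Since $\rho$ is a class function, Schur's lemma gives that $\lambda(\rho)$ is the scalar $\alpha_\lambda$ of Lemma~\ref{lem:eigenvalue-transp-shuffle}. So every vector in the $\lambda$-block is an eigenvector of $P$, and we only need to diagonalize $Q$ there.

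\textbf{Action of $Q$.} Write
\[
\lambda(\rho^\star)=\tfrac1n\Bigl(I+\lambda(J)\Bigr),\qquad J=\sum_{j=2}^n(1\,j).
\]
Let $\mathfrak S_{n-1}$ be the stabilizer of $1$. Then $J=T_n-T_{n-1}$, where $T_n$ is the sum of all transpositions in $\mathfrak S_n$ and $T_{n-1}$ is the sum of those in $\mathfrak S_{n-1}$. The element $T_n$ is central in $\mathbb C[\mathfrak S_n]$ and acts on $V_\lambda$ by the total content
\[
c(\lambda)=\sum_{\text{boxes}}(\text{col}-\text{row})=\sum_j\Bigl[\tbinom{\lambda_j}{2}-\tbinom{\lambda'_j}{2}\Bigr].
\]
This is the classical computation behind Lemma~\ref{lem:eigenvalue-transp-shuffle}, obtained from the character ratio.

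By the branching rule, $V_\lambda|_{\mathfrak S_{n-1}}$ is multiplicity-free:
\[
V_\lambda|_{\mathfrak S_{n-1}}=\bigoplus_{i\in I(\lambda)}V_{\lambda^{(i)}}.
\]
The element $T_{n-1}$ is central in $\mathbb C[\mathfrak S_{n-1}]$, so it acts on $V_{\lambda^{(i)}}$ by $c(\lambda^{(i)})$. Hence $J$ acts on $V_{\lambda^{(i)}}$ by $c(\lambda)-c(\lambda^{(i)})$, which is the content of the removed box, namely $\lambda_i-i$. It follows that $Q$ acts on $V_{\lambda^{(i)}}$ by
\[
\frac{1}{n}(1+\lambda_i-i)=\beta_{\lambda^{(i)}},
\]
in agreement with Lemma~\ref{lem:eigenvalues-star-transp}.

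\textbf{Assembling the joint eigenspaces.} The joint eigenspace indexed by $(\lambda,\lambda^{(i)})$ is $d_\lambda$ copies of $V_{\lambda^{(i)}}$ inside the $\lambda$-block. Its dimension is $d_\lambda d_{\lambda^{(i)}}$, and on it $P$ acts by $\alpha_\lambda$ and $Q$ by $\beta_{\lambda^{(i)}}$. As a consistency check, summing over $i\in I(\lambda)$ gives $d_\lambda\sum_i d_{\lambda^{(i)}}=d_\lambda^2$, so these spaces exhaust the block. For fixed $\lambda$, distinct corners lie on distinct diagonals and so have distinct contents; hence the pieces for different $i$ are genuinely different eigenspaces of $Q$. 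Across different $\lambda$, the blocks are orthogonal.

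\textbf{Main obstacle.} The one point needing care is the identification of $T_n$ and $T_{n-1}$ acting by total contents. One can cite the standard Jucys--Murphy/content fact. Alternatively, one can verify the $T_n$ case directly from the Frobenius formula for $\chi_\lambda(\text{transposition})/d_\lambda$, which yields the stated sum over rows. In either case, one should also check that the relabeling making card $1$ the special card, rather than card $n$ as in the Jucys--Murphy element $J_n$, is only a conjugation and does not change the spectrum or the multiplicities.
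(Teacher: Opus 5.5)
Your proposal is correct and follows essentially the same route as the argument behind this lemma. The paper gives no proof of its own and only cites \cite[Lemma 5.2]{comparison-nestoridi}, which builds on the Flatto--Odlyzko--Wales computation. That argument, like yours, uses Peter--Weyl to reduce to \(V_\lambda\) and Schur's lemma to make \(P\) the scalar \(\alpha_\lambda\). It then uses the multiplicity-free restriction to the stabilizer of \(1\), together with \(\sum_{j\ge2}(1\,j)=T_n-T_{n-1}\) acting on \(V_{\lambda^{(i)}}\) by the content \(\lambda_i-i\), to obtain \(Q=\beta_{\lambda^{(i)}}\) on a block of dimension \(d_\lambda d_{\lambda^{(i)}}\).
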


We now record the estimates for $d_\lambda$ and $d_{\lambda^{(i)}}$.

\begin{lem}[{\cite[Lemma 3.3]{jain2024hitting}}, {\cite[Corollary 2]{diaconis1981generating}}]\label{lem:bounds-d-lambda}
    Let \(\lambda \vdash n\). Then \(d_\lambda \leq \binom{n}{\lambda_1} \sqrt{(n-\lambda_1)!}\).
\end{lem}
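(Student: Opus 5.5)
The plan is a direct counting argument on standard Young tableaux: split off the first row, then bound the remaining part using the sum-of-squares identity for dimensions of irreducible representations.

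Write \(m := n-\lambda_1\). Let \(\mu := (\lambda_2,\lambda_3,\ldots)\) be the partition of \(m\) obtained by deleting the first row of \(\lambda\). When \(m=0\), \(\mu\) is the empty partition and \(d_\mu=1\).

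\textbf{Step 1: an injection.} To a standard Young tableau \(T\) of shape \(\lambda\), associate the pair \((A,T')\), where:
\begin{itemize}
\item \(A\subseteq[n]\) is the set of \(\lambda_1\) entries in the first row of \(T\);
\item \(T'\) is obtained from rows \(2,3,\ldots\) of \(T\) by replacing the remaining \(m\) entries with \(1,\dots,m\) in an order-preserving way.
\end{itemize}
Order-preserving relabeling keeps rows and columns strictly increasing, so \(T'\) is a standard Young tableau of shape \(\mu\).

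This map is injective. From \(A\) we recover the first row, since its entries are written in increasing order. From \(T'\) and the complement \([n]\setminus A\), listed in increasing order, we recover the labels of the lower rows. Hence
\[
d_\lambda \le \binom{n}{\lambda_1}\, d_\mu .
\]
Not every pair \((A,T')\) arises from a tableau, because the column conditions between row \(1\) and row \(2\) may fail. This only means the inequality is typically strict, which is harmless for an upper bound.

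\textbf{Step 2: bounding \(d_\mu\).} Use the standard identity \(\sum_{\nu\vdash m} d_\nu^2 = m!\). It follows from the decomposition of the regular representation of \(\mathfrak S_m\), or combinatorially from the RSK correspondence. Keeping only the term \(\nu=\mu\) gives
\[
d_\mu^2\le m!, \qquad \text{so} \qquad d_\mu\le\sqrt{(n-\lambda_1)!}.
\]

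Combining Steps 1 and 2 gives \(d_\lambda\le \binom{n}{\lambda_1}\sqrt{(n-\lambda_1)!}\), which is the claim.

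There is no real obstacle here. The only point needing care is checking that the relabeled lower part \(T'\) is a genuine standard Young tableau and that the map \(T\mapsto(A,T')\) is injective, and both follow because order-preserving relabeling respects all strict inequalities within the lower rows.
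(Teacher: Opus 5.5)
Your proof is correct, and it is essentially the standard argument behind the sources the paper cites for this lemma (the paper gives no proof of its own, only the references \cite[Corollary 2]{diaconis1981generating} and \cite[Lemma 3.3]{jain2024hitting}). As there, you choose the $\lambda_1$ first-row entries in $\binom{n}{\lambda_1}$ ways, bound the fillings of the shape with the first row removed by $d_\mu$, and conclude with $d_\mu^2\le\sum_{\nu\vdash m}d_\nu^2=m!$.
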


To compare $d_\lambda$ and $d_{\lambda^{(i)}}$, we invoke the branching rule for irreducible representations of $\mathfrak{S}_n$.

\begin{lem}[{\cite[Theorem 3.6]{flatto1985random}}] \label{lem:branching-rule}
    Let \(\lambda \vdash n\). Then
    $d_\lambda = \sum_{i\in I(\lambda)} d_{\lambda^{(i)}}.$
    Moreover, for every \(i\in I(\lambda)\),
    $d_{\lambda^{(i)}} = d_{(\lambda')^{(\lambda_i)}}.$
\end{lem}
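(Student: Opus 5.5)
The statement just above is Lemma~\ref{lem:branching-rule}, which has two parts. The first part, $d_\lambda=\sum_{i\in I(\lambda)} d_{\lambda^{(i)}}$, I would prove by a direct bijection on standard Young tableaux. In any standard Young tableau $T$ of shape $\lambda\vdash n$, the entry $n$ is the largest, so it cannot lie directly to the left of or directly above another box. Hence it sits in a box with no box to its right and no box below it, that is, in a corner.

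A box is a corner exactly when it is the rightmost box of some row $i$ with $\lambda_i>\lambda_{i+1}$, that is, when $i\in I(\lambda)$. Deleting the box containing $n$ therefore produces a standard Young tableau of shape $\lambda^{(i)}$ for a unique $i\in I(\lambda)$. Conversely, take any standard tableau of shape $\lambda^{(i)}$ with $i\in I(\lambda)$ and add a box containing $n$ at the end of row $i$. The result has shape $\lambda$, and it is standard because $n$ exceeds every other entry, so the row and column conditions hold at the new box. These two maps are mutually inverse, which gives a bijection
\[
\mathrm{SYT}(\lambda)\;\longleftrightarrow\;\bigsqcup_{i\in I(\lambda)} \mathrm{SYT}(\lambda^{(i)}).
\]
Counting both sides proves the first identity.

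For the second part, I would use transposition of diagrams: reflecting a standard Young tableau across the main diagonal gives a standard Young tableau of the transposed shape. Rows and columns are swapped, so strict increase along both is preserved, and the map is an involution. This yields $d_\mu=d_{\mu'}$ for every partition $\mu$.

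The remaining step is the combinatorial identity $(\lambda^{(i)})'=(\lambda')^{(\lambda_i)}$. The corner in row $i$ is the box $(i,\lambda_i)$. Its reflection is the box $(\lambda_i,i)$ of $\lambda'$, which is the rightmost box of row $\lambda_i$ of $\lambda'$, because $\lambda'_{\lambda_i}=i$ when row $i$ has a corner. This reflected box is again a corner, since corners are preserved by reflection. Removing a box and then transposing gives the same diagram as transposing and then removing the reflected box. Combining, $d_{\lambda^{(i)}}=d_{(\lambda^{(i)})'}=d_{(\lambda')^{(\lambda_i)}}$.

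The only point needing care is the indexing check $\lambda'_{\lambda_i}=i$. It holds because $\lambda_{i+1}<\lambda_i\le\lambda_i$ for the rows up to $i$ and fails below, so column $\lambda_i$ has exactly $i$ boxes. This in turn guarantees that $\lambda_i\in I(\lambda')$, so the notation $(\lambda')^{(\lambda_i)}$ is well defined. Beyond this check, nothing is a genuine obstacle; the argument is routine.
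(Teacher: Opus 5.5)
Your proof is correct, and it takes a more elementary route than the paper. The paper gives no proof of this lemma. It invokes the branching rule for irreducible representations of $\mathfrak S_n$ via \cite[Theorem 3.6]{flatto1985random}, so the first identity comes from taking dimensions in the decomposition of the restriction to $\mathfrak S_{n-1}$. The transpose identity is then used in the $\mathcal L'$ regime alongside $d_\lambda=d_{\lambda'}$, which the paper takes from the hook-length formula. You instead prove both parts combinatorially: you locate the entry $n$ in a corner to get the SYT bijection, reflect tableaux across the diagonal to get $d_\mu=d_{\mu'}$, and check $(\lambda^{(i)})'=(\lambda')^{(\lambda_i)}$. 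This is self-contained and suffices for every use of the lemma in the paper. All of those uses are numerical: the bounds $\sum_i d_{\lambda^{(i)}}\le d_\lambda$ in the $\mathcal S$-regime and one-step estimates, and the corner pairing $(\lambda,i)\leftrightarrow(\lambda',\lambda_i)$ in the $\mathcal L'$ regime. What the representation-theoretic version gives in addition is the module-level decomposition behind the joint eigenspaces of dimension $d_\lambda d_{\lambda^{(i)}}$ in Lemma~\ref{lem:common-eigenbasis}, but that is a separately cited input anyway. One small wording issue concerns your justification of $\lambda'_{\lambda_i}=i$. As written it is garbled; what you mean is $\lambda_r\ge\lambda_i$ for $r\le i$ and $\lambda_r\le\lambda_{i+1}<\lambda_i$ for $r>i$. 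Also, $\lambda'_{\lambda_i}=i$ alone does not give $\lambda_i\in I(\lambda')$; you also need $\lambda'_{\lambda_i+1}\le i-1$. This holds because row $i$ has length exactly $\lambda_i$, and your earlier remark that reflection preserves corners already supplies it.
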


The following lemma provides an upper bound comparing \(d_{\lambda^{(i)}}\) to \(d_\lambda\) when the removed corner lies outside the first row. It improves the bound in \cite[Lemma 5.3]{comparison-nestoridi}, replacing the factor $4^x/n$, where $x=n-\lambda_1$, by $i\lambda_i/n$.

\begin{lem} \label{lem:bounding-di-vals}
    Let \(\lambda \vdash n\), and let \(i\in I(\lambda)\) satisfy \(i > 1\). Then \[d_{\lambda^{(i)}} \leq \frac{i \lambda_i}{n} d_\lambda.\]
\end{lem}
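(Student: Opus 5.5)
The plan is to use the hook length formula. It expresses the ratio $d_{\lambda^{(i)}}/d_\lambda$ as $\frac1n$ times a product of factors $h/(h-1)$ over the hooks in the row and column of the removed corner. We then bound that product by $i\lambda_i$.

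\textbf{The ratio.} For a box $(r,j)$ of the Young diagram of $\lambda$, write $h_\lambda(r,j)=\lambda_r-j+\lambda'_j-r+1$ for its hook length. The hook length formula gives $d_\lambda=n!/\prod_{(r,j)\in\lambda}h_\lambda(r,j)$.

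Removing the corner $(i,\lambda_i)$ changes only two kinds of hooks:
\begin{itemize}
\item the boxes $(i,j)$ with $j<\lambda_i$ in row $i$, and the boxes $(r,\lambda_i)$ with $r<i$ in column $\lambda_i$, each have their hook decrease by exactly $1$;
\item the corner itself, which had hook $1$, disappears.
\end{itemize}
All other hooks are unchanged. Therefore
\[
\frac{d_{\lambda^{(i)}}}{d_\lambda}=\frac{(n-1)!}{n!}\prod_{j=1}^{\lambda_i-1}\frac{h_\lambda(i,j)}{h_\lambda(i,j)-1}\;\prod_{r=1}^{i-1}\frac{h_\lambda(r,\lambda_i)}{h_\lambda(r,\lambda_i)-1}.
\]
Every denominator is positive, because each of these boxes has hook at least $2$: its hook contains the corner $(i,\lambda_i)$ as well as the box itself. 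It therefore suffices to show that the row product is at most $\lambda_i$ and the column product is at most $i$.

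\textbf{An elementary inequality.} Suppose $h_1>h_2>\cdots>h_m\ge 2$ are integers. Then $h_{m-k+1}\ge k+1$ for $k=1,\dots,m$. Since $x\mapsto x/(x-1)$ is decreasing on $(1,\infty)$, this gives
\[
\prod_{k=1}^m\frac{h_k}{h_k-1}\le\prod_{k=2}^{m+1}\frac{k}{k-1}=m+1.
\]

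\textbf{Applying it to the row and the column.}
\begin{itemize}
\item \emph{Row.} Moving one step right along row $i$ gives $h_\lambda(i,j)-h_\lambda(i,j+1)=1+(\lambda'_j-\lambda'_{j+1})\ge1$. So the hooks $h_\lambda(i,1),\dots,h_\lambda(i,\lambda_i-1)$ are strictly decreasing integers, all at least $2$. With $m=\lambda_i-1$ the row product is at most $\lambda_i$.
\item \emph{Column.} Moving down column $\lambda_i$ gives $h_\lambda(r,\lambda_i)-h_\lambda(r+1,\lambda_i)=1+(\lambda_r-\lambda_{r+1})\ge1$. So the $i-1$ column hooks are also strictly decreasing and at least $2$, and the column product is at most $i$.
\end{itemize}

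Combining the two bounds gives $d_{\lambda^{(i)}}\le \frac{i\lambda_i}{n}d_\lambda$. The hypothesis $i>1$ is not actually needed for the inequality; it only reflects the case in which the lemma is used. There is no real obstacle here. The only point needing care is checking that the hook sequences are strictly decreasing, so that the telescoping comparison applies.
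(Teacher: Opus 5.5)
Your proof is correct and follows the paper's argument: the hook-length formula reduces $d_{\lambda^{(i)}}/d_\lambda$ to $\frac1n$ times the row and column products of $h/(h-1)$, which telescope to at most $\lambda_i$ and $i$ respectively. The only cosmetic difference is how you get the hook lower bounds $h_\lambda(i,j)\ge \lambda_i-j+1$ and $h_\lambda(r,\lambda_i)\ge i-r+1$: you derive them from strict monotonicity of the hooks along the row and column, whereas the paper reads them off directly by counting the boxes to the right of, or below, each box.
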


\begin{proof}
    The hook \(h_{i, j}\) of the box in row \(i\), column \(j\) of a Young diagram for \(\lambda\) is the number of boxes directly to its right in the same row, plus the number of boxes directly below it in the same column, plus one for the box itself. The hook-length formula tells us {\cite[Theorem 3.10.2]{sagan2013symmetric}} \[d_\lambda = \frac{n!}{\prod_{(i, j) \in \lambda} h_{i, j}}.\]

    For \(\lambda^{(i)}\) to be a valid partition, the rightmost box in row \(i\) must be a corner. Since the last box in row $i$ has no boxes to its right or below it, its hook-length is $h_{i,\lambda_i}=1$. Removing that box reduces the hook-length by one for each box in row $i$ to its left and for each box in column $\lambda_i$ above it. Hence one finds \[\frac{d_{\lambda^{(i)}}}{d_\lambda} = \frac{1}{n} \prod_{k = 1}^{\lambda_i-1} \frac{h_{i, k}}{h_{i, k}-1} \prod_{\ell = 1}^{i-1} \frac{h_{\ell, \lambda_i}}{h_{\ell, \lambda_i}-1}.\] Note that at position $(i,k)$ there are $\lambda_i - k$ boxes to its right, so \(h_{i,k} \ge \lambda_i - k + 1\) and the factor \[\frac{h_{i, k}}{h_{i, k} -1} \leq \frac{\lambda_i - k + 1}{\lambda_i - k}.\] Similarly, at position \((\ell, \lambda_i)\), there are $i - \ell$ boxes below it, so \(h_{\ell, \lambda_i} \ge i - \ell + 1\) and the factor \[\frac{h_{\ell, \lambda_i}}{h_{\ell, \lambda_i} -1} \leq \frac{i - \ell + 1}{i - \ell}.\] Combining these observations we get \[\frac{d_{\lambda^{(i)}}}{d_\lambda} \leq \frac{1}{n} \frac{\lambda_i!}{(\lambda_i - 1)!} \frac{i!}{(i-1)!} = \frac{i\lambda_i}{n}.\]
\end{proof}

\subsection{Even-time comparison theorem} \label{subsection:even-time-comparison-thm}
In this subsection, our main goal is to prove that 
$d_{\mathrm{TV}}(X_t,Y_s)\le n^{-1/2+o(1)} $
for even \(t\) and \(s = 2t\) where \(X_t\) is the random transposition walk and \(Y_s\) is the star transposition walk. By Lemma~\ref{lem:bounding-TV-eigen}, it suffices to bound a spectral sum indexed by pairs \((\lambda,\lambda^{(i)})\), arising from the eigenvalues of the random- and star-transposition kernels. We decompose this sum into three regimes according to the size of the first row and first column of \(\lambda\). The regime \(\mathcal S\), where both the first row and column are small, makes a negligible contribution to the sum. The regime \(\mathcal L\), corresponding to partitions with large first row, is further divided into the cases \((\lambda,\lambda^{(1)})\) and \((\lambda,\lambda^{(i)})\) for \(i>1\); the latter is the main contribution to the argument. The remaining regime \(\mathcal L'\), corresponding to partitions with large first column, is handled by transposing partitions and reducing to the analysis of \(\mathcal L\).

\subsubsection{Reduction to three regimes}\label{subsection:reduction-to-three-regimes}
The next two results reduce the main spectral sum from Lemma~\ref{lem:bounding-TV-eigen} to three regimes and, in the \(\mathcal S\) regime, identify a useful partition pairing. We write
\[I(\lambda)=\{\,i:\lambda_i>\lambda_{i+1}\,\}\] 
for the set of rows whose rightmost cell is a corner.

\begin{cor}\label{cor:spectral-comparison-Sn}
    Let $P$ be the random transposition kernel and $Q$ the star-transposition kernel on $\mathfrak S_n$.
    Then for any $\sigma\in\mathfrak S_n$ and any $t,s\ge 0$,
    \[
    4\|P^t(\sigma,\cdot)-Q^{s}(\sigma,\cdot)\|_{\mathrm{TV}}^2
    \;\le\;
    \sum_{\lambda\vdash n} d_\lambda \sum_{i\in I(\lambda)}
    d_{\lambda^{(i)}}\,\bigl(\alpha_\lambda^{\,t}-\beta_{\lambda^{(i)}}^{\,s}\bigr)^2,
    \]
    where $\alpha_\lambda$ are the eigenvalues of $P$, $\beta_{\lambda^{(i)}}$ are the eigenvalues of $Q$,
    and $d_\lambda d_{\lambda^{(i)}}$ is the dimension of the joint eigenspace indexed by $(\lambda,\lambda^{(i)})$.
\end{cor}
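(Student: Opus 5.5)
This is a direct specialization of Lemma~\ref{lem:bounding-TV-eigen} to the pair \((P,Q)\), using the joint eigenspace decomposition of Lemma~\ref{lem:common-eigenbasis}.

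First we check the hypotheses of Lemma~\ref{lem:bounding-TV-eigen}. Both \(P\) and \(Q\) are random walks on \(\mathfrak S_n\) driven by symmetric measures, so each is reversible with respect to \(U_{\mathfrak S_n}\). By Lemma~\ref{lem:commuting-matrices}, since \(\rho\) is a class function and \(\rho^\star\) is symmetric, \(P\) and \(Q\) commute. Two commuting self-adjoint operators on \(\ell^2(U_{\mathfrak S_n})\) admit a common orthonormal eigenbasis \(\{\varphi_j\}\), and we may take \(\varphi_1\equiv 1\). Because both are random walks on a group, the group form of Lemma~\ref{lem:bounding-TV-eigen} applies for every starting point \(\sigma\):
\[
4\|P^t(\sigma,\cdot)-Q^s(\sigma,\cdot)\|_{\mathrm{TV}}^2\le \sum_{j\ge 2}(\alpha_j^t-\beta_j^s)^2 .
\]
Concretely, left translation by \(\sigma\) is an isometry that commutes with both kernels, so the bound at \(\sigma\) reduces to the bound at the identity.

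Next we regroup the sum by joint eigenspaces. Lemma~\ref{lem:common-eigenbasis} says \(\ell^2(\mathfrak S_n)\) splits into joint eigenspaces indexed by pairs \((\lambda,\lambda^{(i)})\) with \(i\in I(\lambda)\). On the space indexed by \((\lambda,\lambda^{(i)})\), \(P\) acts by \(\alpha_\lambda\), \(Q\) acts by \(\beta_{\lambda^{(i)}}\), and the dimension is \(d_\lambda d_{\lambda^{(i)}}\). These spaces exhaust the whole space: by the branching rule (Lemma~\ref{lem:branching-rule}),
\[
\sum_{\lambda\vdash n}\sum_{i\in I(\lambda)} d_\lambda d_{\lambda^{(i)}}=\sum_{\lambda\vdash n} d_\lambda^2=n!.
\]
Choosing the eigenbasis adapted to this decomposition, the sum over \(j\ge 2\) becomes a sum over pairs, each term weighted by its dimension \(d_\lambda d_{\lambda^{(i)}}\).

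Finally, we handle the trivial eigenvector, which is the only delicate point. The basis vector \(\varphi_1\) lies in the block \(\lambda=(n)\), \(i=1\). That block has dimension \(d_{(n)}d_{(n-1)}=1\), with \(\alpha_{(n)}=1\) and \(\beta_{(n-1)}=\tfrac{1}{n}\cdot n=1\). Its term is therefore \((1^t-1^s)^2=0\). Re-including it in the sum changes nothing, which gives the stated bound summed over all \(\lambda\vdash n\) and all \(i\in I(\lambda)\). No other step poses a real obstacle: everything else is bookkeeping of multiplicities, which Lemma~\ref{lem:common-eigenbasis} and Lemma~\ref{lem:branching-rule} supply.
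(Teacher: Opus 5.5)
Your proposal is correct and follows the paper's proof. You verify commutation via Lemma~\ref{lem:commuting-matrices}, apply the group form of Lemma~\ref{lem:bounding-TV-eigen} using invariance under left translation, regroup the sum into the blocks $(\lambda,\lambda^{(i)})$ of Lemma~\ref{lem:common-eigenbasis}, and observe that the trivial block contributes zero; your extra checks (that the block dimensions sum to $n!$ by the branching rule, and that $\alpha_{(n)}=\beta_{(n-1)}=1$) spell out details the paper leaves implicit.
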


\begin{proof}
    Since the random-transposition kernel \(P\) and the star-transposition kernel \(Q\) commute and admit a common eigenbasis (Lemmas~\ref{lem:commuting-matrices} and~\ref{lem:common-eigenbasis}), Lemma~\ref{lem:bounding-TV-eigen} applies. Moreover, left multiplication by \(\mathfrak S_n\) acts transitively on the state space and preserves both right-convolution kernels. Thus the two chains are transitive under a common action, and for any state \(x \in \mathfrak{S}_n\) and any times \(t,s \geq 0\),
    \[4\|P^t(x,\cdot)-Q^{s}(x,\cdot)\|_{TV}^2
    \;\le\;
    \sum_{i \geq 2}^{n!}\bigl(\alpha_i^{t}-\beta_i^{s}\bigr)^2. \] 

    By Lemma~\ref{lem:common-eigenbasis}, the joint spectral blocks of \(P\) and \(Q\) can be enumerated by the pairs \((\lambda,\lambda^{(i)})\) with \(\lambda\vdash n\) and \(i\in I(\lambda)\), each having dimension \(d_\lambda d_{\lambda^{(i)}}\). The trivial block contributes zero to the difference, so we may rewrite
    \begin{equation}\label{eq:double-sum-corner-partitions}
        \sum_{j=2}^{n!}\bigl(\alpha_j^{\,t}-\beta_j^{\,s}\bigr)^2
    \;=\;
    \sum_{\lambda\vdash n} 
       \;d_\lambda\;\sum_{i\in I(\lambda)} 
          d_{\lambda^{(i)}}\,
          \bigl(\alpha_\lambda^{\,t}-\beta_{\lambda^{(i)}}^{\,s}\bigr)^2.
    \end{equation}
\end{proof}

Using the bijection between corners of $\lambda$ and corners of its transpose $\lambda'$, we may pair each index $i\in I(\lambda)$ with the corresponding corner in $\lambda'$. Under this pairing, corners with $\lambda_i<i$ (``negative'' corners) are dominated by their partners with $\lambda_i\ge i$. 

\begin{lem} \label{lem:pairing-betas}
    Let $\lambda\vdash n$ and let $i\in I(\lambda)$ satisfy \(\lambda_i \geq i\). Then \(\beta_{\lambda^{(i)}} \ge|\beta_{(\lambda')^{(\lambda_i)}}|\).
\end{lem}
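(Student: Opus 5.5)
Identify the corner of \(\lambda'\) that corresponds to the corner of \(\lambda\) in row \(i\), compute the two \(\beta\)-values from Lemma~\ref{lem:eigenvalues-star-transp}, and compare. The rightmost box of row \(i\) of \(\lambda\) sits at position \((i,\lambda_i)\). Under transposition it becomes the box \((\lambda_i, i)\) of \(\lambda'\). So it is the rightmost box of row \(\lambda_i\) of \(\lambda'\), and \(\lambda'_{\lambda_i}=i\).

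Before applying the formula, check that this box is a corner of \(\lambda'\), i.e. \(\lambda_i\in I(\lambda')\). Since \((i,\lambda_i)\) is a corner of \(\lambda\), there is no box \((i+1,\lambda_i)\), and hence \(\lambda'_{\lambda_i}=i\). There is also no box \((i,\lambda_i+1)\), so \(\lambda'_{\lambda_i+1}\le i-1<i\). Thus \(\lambda_i\in I(\lambda')\), and removing this corner from \(\lambda'\) gives exactly \((\lambda^{(i)})'\). This is the bijection already used in Lemma~\ref{lem:branching-rule}.

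Apply Lemma~\ref{lem:eigenvalues-star-transp} to \(\lambda\) with row \(i\), and to \(\lambda'\) with row \(\lambda_i\). This gives
\[
\beta_{\lambda^{(i)}}=\frac{\lambda_i-i+1}{n},\qquad
\beta_{(\lambda')^{(\lambda_i)}}=\frac{\lambda'_{\lambda_i}-\lambda_i+1}{n}=\frac{i-\lambda_i+1}{n}.
\]
Write \(c=\lambda_i-i\), the content of the removed box. The hypothesis \(\lambda_i\ge i\) means \(c\ge 0\). Then \(\beta_{\lambda^{(i)}}=(c+1)/n\ge 1/n>0\), and \(\beta_{(\lambda')^{(\lambda_i)}}=(1-c)/n\). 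Consequently
\[
\bigl|\beta_{(\lambda')^{(\lambda_i)}}\bigr|=\frac{|1-c|}{n}\le\frac{1+c}{n}=\beta_{\lambda^{(i)}},
\]
since \(|1-c|\le 1+c\) for every \(c\ge0\). This proves the claim.

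The only step that needs care is the bookkeeping that the corner in row \(i\) of \(\lambda\) corresponds to row \(\lambda_i\) of \(\lambda'\) with \(\lambda'_{\lambda_i}=i\). One should also note that the eigenvalue formula \((\lambda_i-i+1)/n\) in Lemma~\ref{lem:eigenvalues-star-transp} holds for every corner row, including row \(1\), so it may be applied to \(\lambda'\) whichever row the corresponding corner lands in. Once this is in place, the inequality is elementary.
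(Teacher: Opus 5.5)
Your proof is correct and follows essentially the same route as the paper. Both compute the two star eigenvalues from Lemma~\ref{lem:eigenvalues-star-transp}, use that the content $\lambda_i-i$ of the removed box changes sign under transposition, and conclude with $|1-c|\le 1+c$ for $c\ge 0$; your version simply spells out the corner correspondence $\lambda'_{\lambda_i}=i$ and this final inequality, which the paper leaves implicit.
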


\begin{proof}
    By Lemma~\ref{lem:eigenvalues-star-transp}, observe that for any row \(i\) whose rightmost cell is a corner, \[\beta_{\lambda^{(i)}} =\frac1n+\frac{n-1}n\,b_{\lambda^{(i)}}, \qquad b_{\lambda^{(i)}} =\frac{\lambda_i-i}{\,n-1\,}, \] and that under diagram transpose \[ b_{(\lambda')^{(\lambda_i)}} =-\,b_{\lambda^{(i)}}. \]
    Therefore, if \(b_{\lambda^{(i)}} \ge 0\) (equivalently \(\lambda_i \geq i\)), we get that \( \beta_{\lambda^{(i)}} =|\beta_{\lambda^{(i)}}| \;\ge\;|\beta_{(\lambda')^{(\lambda_i)}}|. \)
\end{proof}

Consequently, for the transpose-invariant sums of nondecreasing functions of \(|\beta_{\lambda^{(i)}}|\) used below, it suffices to sum only over the set of positive corners
\begin{equation} \label{eq:J(lambda)}
J(\lambda)\;:=\;\{\,i\in I(\lambda): \lambda_i\ge i\,\}.
\end{equation}
We then multiply the resulting bound by \(2\).
To apply this pairing observation and split our sums based on the size of \(\lambda_1\), we need both \(\lambda\) and \(\lambda'\) to have similar size. Accordingly, we classify every partition into one of three types: those whose largest part is \enquote{large}, those whose transpose's largest part is \enquote{large}, and those whose largest part and whose transpose's largest part are both \enquote{small}. Define
\begin{equation}\label{eq:def-S-L-L'}
\mathcal L :=\bigl\{\lambda\vdash n : \lambda_1 \geq n - (\log n)^2\bigr\},
\quad
\mathcal L' :=\bigl\{\lambda\vdash n : \lambda'_1 \geq n - (\log n)^2\bigr\}, 
\quad 
\mathcal S := \bigl\{\lambda\vdash n : \lambda \not\in \mathcal L \cup \mathcal L'\bigr\}. 
\end{equation}
For all sufficiently large \(n\), the sets \(\mathcal L\) and
\(\mathcal L'\) are disjoint because
\(\lambda_1+\lambda'_1\le n+1\). Thus these three sets partition
\(\{\lambda\vdash n\}\), and we treat the corresponding sums separately.

\subsubsection{Bounding the \texorpdfstring{$\mathcal{S}$}{S} regime}

We bound the sum in the \(\mathcal S\) regime using Lemma~\ref{lem:random-transp-eigenval-bound}, which controls the random-transposition contribution \(\sum d_\lambda^2|\alpha_\lambda|^{2t},\) and Lemma~\ref{lem:bounding-d-di-beta-2s}, which controls the star-transposition contribution
\[\sum d_\lambda\sum_{i\in J(\lambda)} d_{\lambda^{(i)}}\beta_{\lambda^{(i)}}^{2s}.\] 
Combining these estimates in Lemma~\ref{lem:S-regime} gives an \(n^{-2+o(1)}\) contribution from \(\mathcal S\).

\begin{lem}[{\cite[Lemma 3.4]{jain2024hitting}}] \label{lem:random-transp-eigenval-bound}
    Let \(t = \lfloor \frac{n \log n}{2} \rfloor + t'\) be an integer and \(|t'| \leq n(\frac{\log \log n}{4} - \log \log \log n)\). Let \(\alpha_\lambda\) be the eigenvalues of \(P\), the transition kernel associated to the random transposition walk. Then
    \[\sum\limits_{\substack{\lambda \vdash n \\ \lambda_1 < n-(\log n)^2}} d_\lambda^2 |\alpha_\lambda|^{2t} \leq n^{-2 + o(1)}. \]
\end{lem}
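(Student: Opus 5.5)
This is the Diaconis--Shahshahani estimate restricted to partitions with small first row, so I would follow their approach. By the lemma on eigenvalues of \(P\), every \(\lambda\) has a conjugate \(\lambda'\) with \(d_{\lambda'}=d_\lambda\). Moreover \(\alpha_{\lambda'} = \frac{2}{n}-\alpha_\lambda\), up to the \(1/n\) laziness shift, so \(|\alpha_\lambda|\) is within \(O(1/n)\) of \(|\alpha_{\lambda'}|\). Write \(\lambda_1 = n-x\). The first step splits the sum into two pieces.

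\emph{Main piece: \(\lambda_1\ge n/2\) or \(\lambda_1'\ge n/2\).} By symmetry I reduce to \(\lambda_1 \ge n/2\), paying a factor of \(2\). The contribution of \(\lambda_1'\) large is controlled by the same bound with \(\alpha\) replaced by \(|\alpha_{\lambda'}|\), which is close to \(\alpha_\lambda\). The parity issue does not arise here because only \(|\alpha|^{2t}\) appears.

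\emph{Remaining piece: \(\lambda_1,\lambda_1' < n/2\).} Here I use the standard bound \(|\alpha_\lambda|\le \max(\lambda_1,\lambda_1')/n + O(1/n) \le 1/2+o(1)\). I then bound \(\sum d_\lambda^2\le n!\) and note that \(n!\,(1/2+o(1))^{n\log n} = e^{-c\,n\log n}\), which is negligible.

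For the main piece I use the Diaconis--Shahshahani monotonicity bound. For \(\lambda_1 = n-x\) with \(x\le n/2\),
\[
\alpha_\lambda \le 1-\frac{2x(n-x+1)}{n^2}+O(1/n),
\]
which comes from \(\sum_i \binom{\lambda_i}{2}\) being maximized by the partition \((n-x,x)\). Grouping by \(x\), the sum \(\sum_{\mu\vdash x} d_\mu^2 = x!\) and the bound on \(d_\lambda\) from Lemma~\ref{lem:bounds-d-lambda} give
\[
\sum_{\lambda_1=n-x} d_\lambda^2 \le \binom{n}{x}^2 x! \le \frac{n^{2x}}{x!}.
\]
So the main piece is bounded by
\[
\sum_{x>(\log n)^2}^{n/2} \frac{n^{2x}}{x!}\exp\Bigl(-\frac{4x(n-x+1)t}{n^2}\Bigr).
\]
With \(t = \tfrac12 n\log n + t'\), the factor \(n^{2x}\) is cancelled by \(e^{-2x\log n}\) up to the corrections \(e^{4x|t'|/n}\) and \(e^{2x^2\log n/n}\).

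I split the range of \(x\) at \(n/\log n\) or a similar threshold.
\begin{itemize}
\item For \((\log n)^2< x\le \epsilon n/\log n\), each term is at most \(\frac{1}{x!}\bigl(e^{4|t'|/n}e^{O(1)}\bigr)^x\). Since \(e^{4|t'|/n} \le \log n/(\log\log n)^4\), each term is at most \((C\log n/((\log\log n)^4 x))^x\). For \(x\ge(\log n)^2\) this is \(\le (1/\log n)^{x}\), giving \(n^{-\omega(1)}\), in particular \(n^{-2+o(1)}\).
\item For larger \(x\) up to \(n/2\), I use \(n-x+1\ge n/2\) together with Stirling's bound \(x!\ge (x/e)^x\). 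Each term is then at most \(\bigl(e\,n^{2}n^{-1-o(1)}/x\bigr)^x\), which is still fine since \(x\gg n^{1-o(1)}\) there. This needs care: the decay exponent \(2x(n-x)\log n/n\) gives roughly \(n^{-x(1-x/n)}\cdot n^{2x}/x!\). Writing \(x!\approx (x/e)^x\) yields \(\bigl(e\,n^{1+x/n}/x\bigr)^x\), and since \(x\ge n/\log n\) this is \(\le (e\log n\cdot n^{x/n})^x\). That is bad near \(x=n/2\), so there I would instead fall back on the sharper Diaconis--Shahshahani estimate for \(d_\lambda\) and their refined treatment.
\end{itemize}

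I expect this large-\(x\) range to be the main obstacle; the small-\(x\) range and the reflection are routine. Since the lemma is cited from \cite{jain2024hitting} and \cite{diaconis1981generating}, whose upper bound argument handles exactly this range, I would import that bookkeeping rather than redo it.
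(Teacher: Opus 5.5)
The paper gives no proof of this lemma; it is quoted from Jain--Sawhney, whose argument is the Diaconis--Shahshahani bookkeeping you outline. So your framework is the right one, but two steps fail, and they carry the content of the lemma.

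\textbf{The reflection.} The range $\{\lambda_1<n-(\log n)^2\}$ is not closed under conjugation. It contains every $\lambda$ with a long first column, e.g.\ $\lambda=(1^n)$. Their conjugates have long first rows, including $\lambda'=(n)$ with $\alpha_{\lambda'}=1$, and are excluded from your main piece, which only treats $x>(\log n)^2$. So ``by symmetry, paying a factor of $2$'' does not cover them. Likewise ``$|\alpha_\lambda|$ within $O(1/n)$ of $|\alpha_{\lambda'}|$'' is useless at exponent $2t\approx n\log n$, where a shift of $c/n$ in the base is a factor $n^{\pm 2c}$.

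These partitions in fact dominate the sum. The term $\lambda=(1^n)$ alone contributes $(1-\tfrac{2}{n})^{2t}=(1+o(1))\,n^{-2}e^{-4t'/n}\ge(1+o(1))\,n^{-2}$ for $t'\le 0$. Your argument, completed as described, would bound the whole sum by $n^{-\omega(1)}$, so it would prove a false bound.

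What is needed is the exact identity $\alpha_\lambda=\tfrac{2}{n}-\alpha_{\lambda'}$, together with the exact Diaconis--Shahshahani bound $\alpha_\mu\le 1-\tfrac{2y(n-y+1)}{n^2}$ for $\mu_1=n-y\ge n/2$. That bound holds with no $O(1/n)$ error term, and such an error would cost $n^{O(1)}$. For $\lambda'_1=n-y$ with $y=o(n)$ this gives $|\alpha_\lambda|\le 1-\tfrac{2}{n}-\tfrac{2y(n-y+1)}{n^2}$. Hence the contribution is of order $e^{-4t/n}\sum_y \frac{n^{2y}}{y!}e^{-4ty/n}\approx n^{-2}e^{-4t'/n}\exp\bigl(e^{-4t'/n}\bigr)$. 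This is where the exponent $-2$ comes from. It is also where the precise window matters: $e^{-4t'/n}\le \log n/(\log\log n)^4$ is exactly what makes $\exp(e^{-4t'/n})=n^{o(1)}$.

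\textbf{The piece $\lambda_1,\lambda'_1<n/2$.} The bound $|\alpha_\lambda|\le\max(\lambda_1,\lambda'_1)/n$ is fine, but pairing it with $\sum d_\lambda^2\le n!$ is not. Indeed $\log\bigl(n!\,2^{-2t}\bigr)=(1-\log 2)\,n\log n+O(n\log\log n)\to+\infty$, so this is far from negligible.

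You must group by $m=\max(\lambda_1,\lambda'_1)$ and use $\sum_{\lambda_1=m}d_\lambda^2\le\binom{n}{m}^2(n-m)!\le 4^n n^{n-m}$ against $(m/n)^{2t}$. Since $2t=(1+o(1))n\log n$ and $(1-\mu)+\log\mu\le\tfrac12-\log 2<0$ for $\mu=m/n\le\tfrac12$, this piece is $n^{-cn}$.

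A similar loss is what breaks your large-$x$ estimate near $x=n/2$, which you defer to the literature. There the relaxation $1-u\le e^{-u}$ at a base near $\tfrac12$ costs a factor $n^{(\log 2-1/2)n}$. As written, the proposal therefore establishes only the range $\lambda_1\ge n/2$, $(\log n)^2<x\le\epsilon n/\log n$. That is the easy, superpolynomially small part of the sum.
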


It remains to obtain the corresponding estimate for the star-transposition part. The next lemma shows that, after summing over the relevant corner removals indexed by
\[J(\lambda)=\{\,i\in I(\lambda):\lambda_i\ge i\,\},\]
as defined in \eqref{eq:J(lambda)}, the star-transposition contribution from \(\mathcal S\) is superpolynomially small.

\begin{lem} \label{lem:bounding-d-di-beta-2s}
    Let \(s\) be an integer satisfying
    \[
        \left|\frac{s-\lfloor n\log n\rfloor}{n}\right|
        \leq \frac{\log\log n}{2}-2\log\log\log n+\frac{5}{n}.
    \]
    Let \(\beta_{\lambda^{(i)}}\) be the eigenvalues of \(Q\), the transition kernel associated to the star transposition walk. Then
    \[\sum_{\lambda\in\cS}d_\lambda\sum_{i\in J(\lambda)}d_{\lambda^{(i)}}\beta_{\lambda^{(i)}}^{2s} \leq n^{-\omega(1)}. \]
\end{lem}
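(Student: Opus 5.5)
We bound the sum by a crude bound on the eigenvalues times a crude bound on the multiplicities. The basic bookkeeping is
\[
\sum_{i\in I(\lambda)} d_{\lambda^{(i)}}=d_\lambda
\]
(Lemma~\ref{lem:branching-rule}), so $\sum_{i\in J(\lambda)} d_{\lambda^{(i)}}\le d_\lambda$. Hence the whole sum is at most
\[
\sum_{\lambda\in\mathcal S} d_\lambda^2\,\max_{i\in J(\lambda)}\beta_{\lambda^{(i)}}^{2s}.
\]
For $i\in J(\lambda)$ we have $0\le\beta_{\lambda^{(i)}}=(\lambda_i-i+1)/n\le \lambda_1/n$. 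So it suffices to bound
\[
\sum_{\lambda\in\mathcal S} d_\lambda^2(\lambda_1/n)^{2s}.
\]
By the symmetry $\lambda\leftrightarrow\lambda'$ (which preserves $\mathcal S$ and $d_\lambda$), this reduces to bounding the same sum with $\lambda_1\ge\lambda_1'$. We will also use $\sum_{\lambda\vdash n} d_\lambda^2=n!$.

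Split $\mathcal S$ by the size of $\lambda_1$, writing $x=n-\lambda_1\ge(\log n)^2$.

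\emph{Case $\lambda_1\le n/2$.} Here $(\lambda_1/n)^{2s}\le 2^{-2s}$. Since $s\ge n\log n-n\log\log n$, we get $2^{-2s}=\exp(-2\log 2\cdot n\log n(1-o(1)))$. This beats $n!\le \exp(n\log n)$, because $2\log 2>1$. So the total contribution of this case is at most $n!\,2^{-2s}=n^{-\omega(1)}$, indeed $e^{-\Omega(n\log n)}$.

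\emph{Case $n/2<\lambda_1\le n-(\log n)^2$.} Apply Lemma~\ref{lem:bounds-d-lambda}: $d_\lambda^2\le\binom{n}{x}^2 x!\le n^{2x}/x!$. The number of $\lambda$ with a given $x$ is $p(x)\le e^{O(\sqrt x)}$. The eigenvalue factor is
\[
(1-x/n)^{2s}\le e^{-2sx/n}\le \exp\bigl(-2x\log n+x\log\log n+O(x\log\log\log n)\bigr),
\]
using the allowed window for $s$. So each term is at most
\[
\exp\bigl(x\log\log n-\log x!+O(\sqrt x)+O(x\log\log\log n)\bigr).
\]
Since $x\ge(\log n)^2$, we have $\log x!\ge x\log x-x\ge 2x\log\log n-x$. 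Hence the exponent is at most $-x\log\log n(1-o(1))$. Summing over $x\ge(\log n)^2$ gives at most $\exp(-(\log n)^2\log\log n/2)=n^{-\omega(1)}$.

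The main obstacle is making this exponent bookkeeping precise near the lower end $x\approx(\log n)^2$. There the $2x\log n$ gain from the eigenvalue must exactly cancel the $n^{2x}$ from $\binom nx^2$. The window width $\tfrac12\log\log n$ costs $x\log\log n$. The leftover saving $x\log x-x\log\log n\approx x\log\log n$ must absorb both the $O(x\log\log\log n)$ error and the $+5/n$ slack.

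If the decay from $(1-x/n)^{2s}$ with $x$ up to $n/2$ proves lossy, I would use $\log(1-u)\le -u-u^2/2$, or simply note that for $x\ge n/\log n$ we have $(1-x/n)^{2s}\le e^{-2x\log n(1-o(1))}$. That is still enough against $n^{2x}/x!$, since $\log x!\approx x\log n$ in that range.
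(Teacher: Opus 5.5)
Your proof is correct and follows essentially the paper's argument. You use the branching rule to reduce to $\sum_{\lambda\in\mathcal S} d_\lambda^2(\lambda_1/n)^{2s}$, and then combine $d_\lambda^2\le n^{2x}/x!$, Hardy--Ramanujan, $(1-x/n)^{2s}\le e^{-2sx/n}$ with the window, and Stirling using $x\ge(\log n)^2$. The paper applies your Case~2 estimate to all $x\ge(\log n)^2$ at once, since both bounds hold for every $x$, so your separate treatment of $\lambda_1\le n/2$ via $\sum d_\lambda^2=n!$ and the (unused) transposition reduction are valid but unnecessary.
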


\begin{proof}
    For \(i\in J(\lambda)\), Lemma~\ref{lem:eigenvalues-star-transp} gives
    \(0<\beta_{\lambda^{(i)}}\le(n-x)/n\), where
    \(x=n-\lambda_1\). By Lemma~\ref{lem:branching-rule},
    \(\sum_{i \in J(\lambda)} d_{\lambda^{(i)}} \leq
    \sum_{i \in I(\lambda)} d_{\lambda^{(i)}} = d_\lambda\).
    Putting these bounds together, we get the following:
    \begin{equation*} 
    2\sum_{\lambda\in\cS}d_\lambda\sum_{i\in J(\lambda)}d_{\lambda^{(i)}}\beta_{\lambda^{(i)}}^{2s} 
    \le 2\sum_{\lambda\in\cS}d_\lambda\bigl(\tfrac{n-x}{n}\bigr)^{2s}\sum_{i\in J(\lambda)}d_{\lambda^{(i)}} 
    \;\le\; 2\sum_{\lambda\in\cS}d_\lambda^2\bigl(\tfrac{n-x}{n}\bigr)^{2s}.
    \end{equation*} 
    By Lemma~\ref{lem:bounds-d-lambda}, \(d_\lambda\le\binom n x\sqrt{x!}\). Let \(p(x)\) count the number of partitions of \(x\). Since there are at most \(p(x)\) partitions with \(n-\lambda_1=x\), we may re-index the sum by \(x\ge(\log n)^2\), giving
    \[2\sum_{\lambda\in\cS}d_\lambda^2\bigl(\tfrac{n-x}{n}\bigr)^{2s} \le 2\sum_{x\ge(\log n)^2}p(x)\binom n x^2x!\bigl(\tfrac{n-x}{n}\bigr)^{2s}.\]  
 
    Taylor approximation tells us \(\log(1-u) \leq -u\) for \(u \in (0, 1)\), so we have
    \[
        \bigl(\tfrac{n-x}{n}\bigr)^{2s}
        \le \exp\left(\frac{-2sx}{n}\right)
        \le e^{O(x/n)}n^{-2x}(\log n)^x(\log\log n)^{-4x}.
    \]
    The partition function has upper bound given by the Hardy--Ramanujan formula: \(p(x)\le e^{\pi\sqrt{2x/3}}\).
    Moreover \(\binom n x^2x! \leq \frac{n^{2x}}{x!}\). Stirling's approximation formula gives us a lower bound on \(x!\): \(x!\ge(x/e)^x\sqrt{2\pi x}\). Combining these bounds, we get   
    \begin{align*}
        2\sum_{x\ge(\log n)^2}p(x)\binom n x^2x!\bigl(\tfrac{n-x}{n}\bigr)^{2s}
        &\le\; e^{O(1)}\sum_{x\ge(\log n)^2} e^{\pi\sqrt{2x/3}} \,\, \frac{n^{2x}}{x!} \,\, n^{-2x}(\log n)^x(\log\log n)^{-4x} \\
        &\le\; e^{O(1)}\sum_{x\ge(\log n)^2}\Bigl(\frac{e^2\log n}{x(\log\log n)^4}\Bigr)^x =\;n^{-\omega(1)}.
    \end{align*}
\end{proof}

The preceding two lemmas give separate bounds for the random-transposition and star-transposition parts of the \(\mathcal S\)-sum; the next lemma combines them.

\begin{lem}[Contribution of the $\mathcal S$ regime]\label{lem:S-regime}
    Under the assumptions of Lemmas~\ref{lem:random-transp-eigenval-bound} and~\ref{lem:bounding-d-di-beta-2s}, we have
    \[\sum_{\lambda\in\mathcal S} d_\lambda \sum_{i\in I(\lambda)} d_{\lambda^{(i)}}
    \bigl(\alpha_\lambda^{\,t}-\beta_{\lambda^{(i)}}^{\,s}\bigr)^2
    \le n^{-2+o(1)}.\]
\end{lem}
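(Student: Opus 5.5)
We use the elementary inequality \((a-b)^2\le 2a^2+2b^2\) to split the \(\mathcal S\)-sum into a random-transposition part and a star-transposition part:
\[
\sum_{\lambda\in\mathcal S} d_\lambda \sum_{i\in I(\lambda)} d_{\lambda^{(i)}}\bigl(\alpha_\lambda^{t}-\beta_{\lambda^{(i)}}^{s}\bigr)^2
\le 2\sum_{\lambda\in\mathcal S} d_\lambda|\alpha_\lambda|^{2t}\sum_{i\in I(\lambda)} d_{\lambda^{(i)}}
+2\sum_{\lambda\in\mathcal S} d_\lambda\sum_{i\in I(\lambda)} d_{\lambda^{(i)}}\beta_{\lambda^{(i)}}^{2s}.
\]

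For the first sum, the branching rule (Lemma~\ref{lem:branching-rule}) gives \(\sum_{i\in I(\lambda)}d_{\lambda^{(i)}}=d_\lambda\), so it equals \(2\sum_{\lambda\in\mathcal S}d_\lambda^2|\alpha_\lambda|^{2t}\). Every \(\lambda\in\mathcal S\) satisfies \(\lambda_1<n-(\log n)^2\), so this is bounded by the sum in Lemma~\ref{lem:random-transp-eigenval-bound}. Under its hypothesis on \(t\), that sum is at most \(n^{-2+o(1)}\).

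For the second sum, we reduce from \(I(\lambda)\) to the positive corners \(J(\lambda)\) using the transpose pairing. The set \(\mathcal S\) is closed under transposition, since \(\lambda\in\mathcal L\iff\lambda'\in\mathcal L'\). By Lemma~\ref{lem:branching-rule}, \(d_{\lambda'}=d_\lambda\) and \(d_{\lambda^{(i)}}=d_{(\lambda')^{(\lambda_i)}}\). Hence each negative corner \(i\) of \(\lambda\) (with \(\lambda_i<i\)) corresponds to the positive corner \(\lambda_i\) of \(\lambda'\), because \(\lambda'_{\lambda_i}=i>\lambda_i\). This correspondence preserves the weight \(d_\lambda d_{\lambda^{(i)}}\). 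By Lemma~\ref{lem:pairing-betas}, the paired eigenvalue dominates in absolute value. Since \(2s\) is even, \(\beta^{2s}=|\beta|^{2s}\) is nondecreasing in \(|\beta|\), so
\[
\sum_{\lambda\in\mathcal S} d_\lambda\sum_{i\in I(\lambda)} d_{\lambda^{(i)}}\beta_{\lambda^{(i)}}^{2s}\le 2\sum_{\lambda\in\mathcal S}d_\lambda\sum_{i\in J(\lambda)}d_{\lambda^{(i)}}\beta_{\lambda^{(i)}}^{2s}.
\]
Lemma~\ref{lem:bounding-d-di-beta-2s} bounds the right-hand side by \(n^{-\omega(1)}\).

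Adding the two contributions gives \(n^{-2+o(1)}+n^{-\omega(1)}=n^{-2+o(1)}\), which is the claim.

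The only delicate point is the pairing step: the correspondence between negative corners of \(\lambda\) and positive corners of \(\lambda'\) must be injective and must stay inside \(\mathcal S\). Injectivity holds because \((\lambda,i)\mapsto(\lambda',\lambda_i)\) is an involution on corner pairs. Staying inside \(\mathcal S\) holds because \(\mathcal S\) is closed under transposition, as noted above. The remaining steps are direct citations.
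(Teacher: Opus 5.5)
Your proof is correct and follows the paper's argument: the branching rule reduces the random-transposition part to Lemma~\ref{lem:random-transp-eigenval-bound}, and the transpose pairing reduces the star part to Lemma~\ref{lem:bounding-d-di-beta-2s}. The only difference is that you use $(a-b)^2\le 2a^2+2b^2$, whereas the paper expands the square and bounds the cross term by Cauchy--Schwarz; your version is a harmless simplification that costs only a constant factor.
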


\begin{proof}
    Starting from the right-hand side of Corollary~\ref{cor:spectral-comparison-Sn}, we expand the square into two diagonal terms and one cross term. We then apply Lemma~\ref{lem:branching-rule} to rewrite it as a sum over \(\lambda\) alone. This yields a decomposition into three parts:
    \begin{align}
    \sum_{\lambda \in \cS} \;d_\lambda\;\sum_{i\in I(\lambda)} d_{\lambda^{(i)}}\, \bigl(\alpha_\lambda^{\,t}-\beta_{\lambda^{(i)}}^{\,s}\bigr)^2 
    &\leq \sum_{\lambda \in \cS} \;d_\lambda\;\sum_{i\in I(\lambda)} d_{\lambda^{(i)}}\,
    \bigl(|\alpha_\lambda|^{2t} + 2|\alpha_\lambda|^t|\beta_{\lambda^{(i)}}|^s + |\beta_{\lambda^{(i)}}|^{2s}\bigr) \nonumber\\ 
    &\leq \sum_{\lambda \in \cS}d_\lambda\sum_{i\in I(\lambda)} d_{\lambda^{(i)}} |\beta_{\lambda^{(i)}}|^{2s}
    +\sum_{\lambda \in \cS} \;d_\lambda^2 |\alpha_\lambda|^{2t} \nonumber\\
    &\qquad+ 2\sum_{\lambda \in \cS}d_\lambda |\alpha_\lambda|^t\;\sum_{i\in I(\lambda)} d_{\lambda^{(i)}}|\beta_{\lambda^{(i)}}|^s \label{eq:small-lambda-divide-3-parts}.
    \end{align}
    The first sum, after pairing via Lemma~\ref{lem:pairing-betas}, is bounded by \(n^{-\omega(1)}\) using Lemma~\ref{lem:bounding-d-di-beta-2s}:
    \begin{equation} \label{eq:final-d^2-beta^2-bound}
    \sum_{\lambda\in\cS}\;d_\lambda\sum_{i\in I(\lambda)}d_{\lambda^{(i)}}|\beta_{\lambda^{(i)}}|^{2s} \;\leq\; 2\sum_{\lambda\in\cS}d_\lambda\sum_{i\in J(\lambda)}d_{\lambda^{(i)}}\beta_{\lambda^{(i)}}^{2s} \leq n^{-\omega(1)}.
    \end{equation}
    For the second term in Equation~\ref{eq:small-lambda-divide-3-parts}, we use Lemma~\ref{lem:random-transp-eigenval-bound} to get \begin{equation} \label{eq:applying-random-trans-bound}
        \sum_{\lambda \in \cS} d_\lambda^2 |\alpha_\lambda|^{2t} \leq n^{-2+o(1)}.
    \end{equation}

    We now consider the final term in Equation~\ref{eq:small-lambda-divide-3-parts}: \[\sum_{\lambda \in \cS} d_\lambda |\alpha_\lambda|^t \sum_{i \in I(\lambda)} d_{\lambda^{(i)}} |\beta_{\lambda^{(i)}}|^s.\] Applying Cauchy--Schwarz over the pairs \((\lambda,i)\), we have
    \[\left(\sum_{\lambda \in \cS} d_\lambda |\alpha_\lambda|^t \sum_{i \in I(\lambda)} d_{\lambda^{(i)}} |\beta_{\lambda^{(i)}}|^s\right)^2 \leq \left(\sum_{\lambda \in \cS} d_\lambda^2 |\alpha_\lambda|^{2t}\right) \left(\sum_{\lambda \in \cS}d_\lambda\sum_{i \in I(\lambda)} d_{\lambda^{(i)}} |\beta_{\lambda^{(i)}}|^{2s}\right).\]
    By Lemma~\ref{lem:random-transp-eigenval-bound}, the first term on the right hand side is bounded by \(n^{-2+o(1)}\). The second term is bounded by Equation~\ref{eq:final-d^2-beta^2-bound}. Thus, our original sum is at most \begin{equation} \label{eq:cauchy-combining-two}
        \sqrt{n^{-\omega(1)}n^{-2+o(1)}} = n^{-\omega(1)}.
    \end{equation}
    Combining Equations~\ref{eq:final-d^2-beta^2-bound},
    \ref{eq:applying-random-trans-bound}, and
    \ref{eq:cauchy-combining-two}, we obtain the bound \(n^{-2+o(1)}\) in
    Equation~\ref{eq:small-lambda-divide-3-parts}.
\end{proof}

\subsubsection{Bounding the \texorpdfstring{$\mathcal{L}$}{L} regime}
In the \(\mathcal L\) regime, we have \(\lambda_1 \ge n-(\log n)^2\), so \(\lambda\) is close to the one-row partition. Here the relevant eigenvalues may be large (in the sense that they can lie close to \(1\)) but their multiplicities are controlled by the small number \(n-\lambda_1\) of boxes outside the first row. The common spectral decomposition of the random- and star-transposition kernels is indexed by corner removals \(\lambda^{(i)}\), and we split the \(\mathcal L\)-sum according to whether the corner lies in the first row \((i=1)\) or in a lower row \((i>1)\). We record these two estimates as Lemmas~\ref{lem:L-i=1} and~\ref{lem:L-i>1}, and then combine them in Lemma~\ref{lem:L-regime}.

\begin{lem}\label{lem:L-i>1}
    Let $t=\lfloor \tfrac{n\log n}{2}\rfloor+t'$ be an integer with $|t'|\le n(\tfrac{\log\log n}{4}-\log\log\log n)$ and let $s=2t$.
    Then, \[\sum_{\lambda \in \cL} \;d_\lambda\;\sum_{i\in I(\lambda), i >1} d_{\lambda^{(i)}}\, \bigl(\alpha_\lambda^{\,t}-\beta_{\lambda^{(i)}}^{\,s}\bigr)^2 \leq n^{-1+o(1)}.\]
\end{lem}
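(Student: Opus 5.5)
The plan is to exploit the fact that for corners outside the first row the star eigenvalue is tiny, so that the $\beta$-terms are negligible. What remains is the random-transposition contribution, damped by the extra factor $d_{\lambda^{(i)}}/d_\lambda=O(\mathrm{poly}(x)/n)$ coming from Lemma~\ref{lem:bounding-di-vals}.

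Throughout, write $x=n-\lambda_1\le(\log n)^2$. The partition $\lambda=(n)$ has no corner with $i>1$, so we may assume $x\ge1$.

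\textbf{Step 1 (basic bounds on corners).} Fix $i\in I(\lambda)$ with $i>1$. Rows $2,\dots,i$ each contain at least $\lambda_i$ boxes, so $(i-1)\lambda_i\le x$. Hence $i\lambda_i\le 2x$, and also $\lambda_i\le x$ and $i\le x+1$. By Lemma~\ref{lem:eigenvalues-star-transp},
\[
|\beta_{\lambda^{(i)}}|=\frac{|\lambda_i-i+1|}{n}\le \frac{x+1}{n}\le \frac{2(\log n)^2}{n}.
\]
By Lemma~\ref{lem:bounding-di-vals}, $d_{\lambda^{(i)}}\le \frac{2x}{n}d_\lambda$. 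The number of corners of $\lambda$ is at most $\sqrt{2n}$, and more usefully at most $x+1$ among rows $i>1$. Therefore
\[
\sum_{i\in I(\lambda),\,i>1} d_{\lambda^{(i)}}\le \frac{2x(x+1)}{n}\,d_\lambda .
\]

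\textbf{Step 2 (split the square).} Use $(a-b)^2\le 2a^2+2b^2$. The $\beta$-part is at most
\[
\sum_{\lambda\in\mathcal L} d_\lambda^2\Bigl(\tfrac{2(\log n)^2}{n}\Bigr)^{2s}.
\]
Since $s\asymp n\log n$ and $\sum_\lambda d_\lambda^2=n!$, this is $n!\,n^{-\Omega(n\log n)}=n^{-\omega(1)}$. The $\alpha$-part is at most
\[
\frac{4}{n}\sum_{x=1}^{(\log n)^2}x^2\sum_{\lambda\in\mathcal L,\;n-\lambda_1=x} d_\lambda^2\,\alpha_\lambda^{2t}.
\]
So the whole claim reduces to showing that $\sum_{\lambda\in\mathcal L} x^2 d_\lambda^2\alpha_\lambda^{2t}\le n^{o(1)}$.

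\textbf{Step 3 (the random-transposition sum on $\mathcal L$).} By Lemma~\ref{lem:eigenvalue-transp-shuffle}, $\alpha_\lambda=1-2x/n+O(n^{-2+o(1)})$. Since $x\le(\log n)^2$, $\alpha_\lambda$ is positive, and using $2t\le n\log n$ the error term contributes only a factor $e^{o(1)}$. This gives
\[
\alpha_\lambda^{2t}\le e^{o(1)}\exp(-4tx/n)\le e^{O(x/n)+o(1)}\,n^{-2x}(\log n)^x(\log\log n)^{-4x},
\]
where we used the lower bound $t\ge \tfrac{n\log n}{2}-n(\tfrac{\log\log n}{4}-\log\log\log n)-1$. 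Combining this with Lemma~\ref{lem:bounds-d-lambda}, $d_\lambda^2\le \binom nx^2x!\le n^{2x}/x!$, and with at most $p(x)\le e^{\pi\sqrt{2x/3}}$ partitions for each $x$, we get
\[
\sum_{\lambda\in\mathcal L}x^2d_\lambda^2\alpha_\lambda^{2t}\le e^{o(1)}\sum_{x\ge1}\frac{x^2e^{\pi\sqrt{2x/3}}\,L^x}{x!},\qquad L:=\frac{\log n}{(\log\log n)^4}.
\]
The factor $x^2e^{O(\sqrt x)}$ is at most $e^{\varepsilon x}$ for large $x$, for any fixed $\varepsilon>0$. Hence the sum is at most $e^{O(1)}\exp(e^{\varepsilon}L)=n^{o(1)}$. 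Multiplying by $4/n$ gives the claimed $n^{-1+o(1)}$.

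\textbf{Main obstacle.} The delicate point is Step 3 at the most negative allowed $t'$. There, $\alpha^{2t}$ gains a factor $(\log n)^x$, and naively $\sum_x(\log n)^x/x!$ is of order $n$, which would destroy the $1/n$ gain from Lemma~\ref{lem:bounding-di-vals}. It is exactly the $(\log\log n)^{-4x}$ factor, coming from the $-\log\log\log n$ margin in the window for $t'$, that brings this down to $n^{o(1)}$. Consequently one must track the constant $4$ in the exponent carefully. One must also check that the polynomial factors in $x$ (from the corner count and from $i\lambda_i\le 2x$) and the $O(n^{-2+o(1)})$ eigenvalue error do not spoil this. Both are harmless, since $x\le(\log n)^2$ and $t\,n^{-2+o(1)}=o(1)$.
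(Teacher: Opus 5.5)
Your proof is correct and follows the paper's skeleton: bound the squared difference by essentially $\alpha_\lambda^{2t}\approx e^{-4tx/n}$, extract the factor $1/n$ from Lemma~\ref{lem:bounding-di-vals}, and sum over $\mathcal L$ using $d_\lambda^2\le n^{2x}/x!$ and the partition bound, with the $-\log\log\log n$ margin ensuring $e^{3-4t'/n}=o(\log n)$. The differences are cosmetic: you use $|\beta_{\lambda^{(i)}}|\le (x+1)/n$ for $i>1$ together with $(a-b)^2\le 2a^2+2b^2$, where the paper maximizes $b\mapsto(\alpha_\lambda^t-b^s)^2$ over $[-x/n,(n-x)/n]$, and you keep the weight $2x(x+1)/n$ where the paper uses the cruder $(\log n)^6/n$ (the factor $2$ dropped in Step 2, and the claim $2t\le n\log n$, which should read $2t=O(n\log n)$, are both harmless).
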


\begin{proof}
    By Lemma~\ref{lem:eigenvalue-transp-shuffle}, for \(n\) sufficiently large and \(\lambda \in \cL\), \(\alpha_\lambda = 1 - \tfrac{2x}{n} + O(n^{-2+o(1)})\). Therefore, \[\alpha_\lambda^t = \exp\left(\frac{-2tx}{n}\right)(1 + n^{-1+o(1)}).\]

   In order to bound \[\bigl(\alpha_\lambda^{\,t}-\beta_{\lambda^{(i)}}^{\,s}\bigr)^2,\] we first observe from Lemma~\ref{lem:eigenvalues-star-transp} that \(\beta_{\lambda^{(i)}}\) always lies between \(\tfrac{-x}{n}\) and \(\tfrac{n-x}{n}\). Since the map \(b \mapsto (\alpha_\lambda^t - b^s)^2\) is continuous on the closed interval \([\tfrac{-x}{n}, \tfrac{n-x}{n}]\), its maximum must occur either at an interior critical point or at one of the two endpoints. Differentiating shows that the interior critical points are \(b = 0, \pm \sqrt{\alpha_\lambda}\). The values of squared difference at these points are \(\alpha_{\lambda}^{2t} = \exp(-4tx/n)(1 + n^{-1+o(1)})\) and \(0\) respectively.
    At the right endpoint, set \(u=(1-x/n)^2\), so that
    \(((n-x)/n)^s=u^t\). If \(x=0\), then \(\lambda=(n)\) and
    \(\alpha_\lambda=u=1\), so the difference vanishes. For \(x\ge1\)
    and \(n\) sufficiently large, both \(\alpha_\lambda\) and \(u\) are
    positive, and Lemma~\ref{lem:eigenvalue-transp-shuffle} together with
    \(x\le(\log n)^2\) gives
    \[
        |\alpha_\lambda-u|=O(n^{-2+o(1)}),\qquad
        \max\{\alpha_\lambda,u\}
        \le \exp\!\left(-\frac{2x}{n}+O(n^{-2+o(1)})\right).
    \]
    Since \(t=\Theta(n\log n)\), the power-difference identity yields
    \[
        |\alpha_\lambda^t-u^t|
        \le t|\alpha_\lambda-u|
        \max\{\alpha_\lambda,u\}^{t-1}
        \le e^{-2tx/n}n^{-1+o(1)}.
    \]
    Consequently, at \(b = (n-x)/n\), the squared difference satisfies
    \begin{equation} \label{eq:b=n-x/n-squared-diff}
        \left(\alpha_\lambda^t-\left(\frac{n-x}{n}\right)^s\right)^2
        \le e^{\frac{-4tx}{n}}n^{-2+o(1)}, 
    \end{equation}
    whereas at \(b = -x/n\), the left boundary, the squared difference is at most \(\exp(-4tx/n)(1+n^{-1+o(1)})\).
    Thus, we obtain a uniform bound \[\max_{-x/n\le b \le(n-x)/n}\bigl(\alpha_\lambda^t-b^s\bigr)^2 \leq \exp\bigl(-4tx/n\bigr)\bigl(1 + n^{-1+o(1)}\bigr).\]

    Using Lemma~\ref{lem:bounding-di-vals} and the previous bound on the squared difference of \(\alpha_\lambda^t\) and \(\beta_{\lambda^{(i)}}^s\), we can write our sum as the following:
    \begin{equation} \label{eq:bounding-L-sum}
        \sum_{\lambda \in \cL}  d_\lambda \sum_{i\in I(\lambda); i > 1} d_{\lambda^{(i)}}\, \bigl(\alpha_\lambda^{\,t}-\beta_{\lambda^{(i)}}^{\,s}\bigr)^2 
        \leq \sum_{\lambda\in\mathcal L}d_\lambda^2\sum_{\substack{i\in I(\lambda) \\ i>1}}\frac{i\lambda_i}{n}\,e^{-4tx/n}\bigl(1+n^{-1+o(1)}\bigr).
    \end{equation}

    Let 
    \[S_{\mathcal L}^{>1}:=\sum_{\lambda\in\mathcal L}d_\lambda^2\sum_{\substack{i\in I(\lambda) \\ i>1}} \frac{i\lambda_i}{n}\,e^{-4tx/n} \bigl(1+n^{-1+o(1)}\bigr), \qquad x=n-\lambda_1. \]
    Since \(\lambda\in\mathcal L\), we have \(x=n-\lambda_1\le (\log n)^2\). Thus all boxes outside the first row are contained in at most \(x\) lower rows, and each lower row has length at most \(x\). In particular, for \(i>1\),
    \[i\le (\log n)^2,\qquad \lambda_i\le(\log n)^2, \qquad |I(\lambda)|\le(\log n)^2.\]
    This gives
    \[\sum_{\substack{i\in I(\lambda)\\ i>1}}\frac{i\lambda_i}{n} \le \frac{(\log n)^6}{n}.\]
    Therefore,
    \begin{equation} \label{eq:start-of-reduction}
        S_{\mathcal L}^{>1} \le \frac{(\log n)^6}{n} \sum_{\lambda\in\mathcal L} d_\lambda^2 e^{-4tx/n} \bigl(1+n^{-1+o(1)}\bigr).
    \end{equation}
    
    Next, we use the dimension bound from Lemma~\ref{lem:bounds-d-lambda}. Namely, for \(x=n-\lambda_1\), \(d_\lambda^2\le \binom{n}{x}^2x! \le \frac{n^{2x}}{x!}. \)
    Grouping partitions according to the value of \(x=n-\lambda_1\), and using that the number of possibilities for the remaining shape is at most the number of partitions of \(x\), \(p(x)\), we obtain
    \[S_{\mathcal L}^{>1} \le \frac{(\log n)^6}{n} \sum_{x<(\log n)^2} p(x)\frac{n^{2x}}{x!}e^{-4tx/n} \bigl(1+n^{-1+o(1)}\bigr).\]
    
    Since \(t=\lfloor n\log n/2\rfloor+t'\), 
    \[n^{2x}e^{-4tx/n} = e^{-4t'x/n}\bigl(1+n^{-1+o(1)}\bigr).\]
    Using the Hardy--Ramanujan bound \(p(x)\le e^{\pi\sqrt{2x/3}}\), this gives
    \[S_{\mathcal L}^{>1} \le \frac{(\log n)^6}{n} \sum_{x<(\log n)^2} \frac{e^{\pi\sqrt{2x/3}-4t'x/n}}{x!} \bigl(1+n^{-1+o(1)}\bigr). \]
    Finally, we use the crude bound \(e^{\pi\sqrt{2x/3}}\le e^{3x}\):
    \begin{equation}\label{eq:end-of-reduction}
        S_{\mathcal L}^{>1} \le \frac{(\log n)^6}{n} \sum_{x<(\log n)^2} \frac{(e^{3-4t'/n})^x}{x!} \bigl(1+n^{-1+o(1)}\bigr).
    \end{equation}
    Letting \(\gamma=\exp(3-4t'/n),\) we have
    \[S_{\mathcal L}^{>1} \le \frac{(\log n)^6}{n} \bigl(1+n^{-1+o(1)}\bigr)e^\gamma.\]
    Moreover, under our assumptions on \(t'\), \(\gamma=\exp(3-4t'/n)=n^{o(1)}\) so \(e^\gamma=n^{o(1)}\). Therefore,
    \[S_{\mathcal L}^{>1} \le \frac{(\log n)^6}{n}\,n^{o(1)} = n^{-1+o(1)}.\]

\end{proof}

We now handle the contribution from the first-row corner. In this case the relevant random- and star-transposition eigenvalues are both close to \(1\), but the choice \(s=2t\) makes their powers agree to sufficiently high precision.
\begin{lem}\label{lem:L-i=1}
    Let $t=\lfloor \tfrac{n\log n}{2}\rfloor+t'$ be an integer with $|t'|\le n(\tfrac{\log\log n}{4}-\log\log\log n)$ and let $s=2t$.
    Then, \[\sum_{\lambda \in \cL} \;d_\lambda\;d_{\lambda^{(1)}}\, \bigl(\alpha_\lambda^{\,t}-\beta_{\lambda^{(1)}}^{\,s}\bigr)^2 \leq n^{-2+o(1)}.\]
\end{lem}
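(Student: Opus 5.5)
For $i=1$ the removed corner lies in the first row, so Lemma~\ref{lem:eigenvalues-star-transp} gives
\[
\beta_{\lambda^{(1)}}=\frac{\lambda_1}{n}=1-\frac{x}{n},\qquad x=n-\lambda_1 .
\]
Since $s=2t$, we can write $\beta_{\lambda^{(1)}}^{\,s}=u^t$ with $u=(1-x/n)^2$. This is exactly the right-endpoint comparison already carried out in the proof of Lemma~\ref{lem:L-i>1}, and the plan is to reuse it.

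\textbf{Eigenvalue comparison.} If $x=0$, then $\lambda=(n)$ and both eigenvalues equal $1$, so the term vanishes. For $1\le x\le(\log n)^2$, Lemma~\ref{lem:eigenvalue-transp-shuffle} gives $\alpha_\lambda=1-2x/n+O(n^{-2+o(1)})$, while $u=1-2x/n+x^2/n^2$. Hence
\[
|\alpha_\lambda-u|=O(n^{-2+o(1)}),
\]
using $x^2/n^2\le(\log n)^4/n^2$. The power-difference bound $|a^t-b^t|\le t|a-b|\max\{a,b\}^{t-1}$ with $t=\Theta(n\log n)$ then yields, as in \eqref{eq:b=n-x/n-squared-diff},
\[
\bigl(\alpha_\lambda^{\,t}-\beta_{\lambda^{(1)}}^{\,s}\bigr)^2\le e^{-4tx/n}\,n^{-2+o(1)} .
\]
The $o(1)$ in the exponent is uniform in $x\le(\log n)^2$, because the error comes only from factors of $(\log n)^{O(1)}$.

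\textbf{Summation.} Bound $d_{\lambda^{(1)}}\le d_\lambda$ using Lemma~\ref{lem:branching-rule}, and $d_\lambda^2\le n^{2x}/x!$ using Lemma~\ref{lem:bounds-d-lambda}. Group the partitions by $x$, with at most $p(x)\le e^{3x}$ shapes for each value. Exactly as in \eqref{eq:start-of-reduction}--\eqref{eq:end-of-reduction}, this gives
\[
\sum_{\lambda\in\mathcal L}d_\lambda d_{\lambda^{(1)}}\bigl(\alpha_\lambda^{\,t}-\beta_{\lambda^{(1)}}^{\,s}\bigr)^2
\le n^{-2+o(1)}\sum_{x<(\log n)^2}\frac{\bigl(e^{3-4t'/n}\bigr)^x}{x!}(1+o(1))
\le n^{-2+o(1)}e^{\gamma},
\]
with $\gamma=e^{3-4t'/n}$. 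The identity $n^{2x}e^{-4tx/n}=e^{-4t'x/n}\bigl(1+n^{-1+o(1)}\bigr)$ used here needs only $|t-(t_n+t')|\le1$, so that the rounding error is $O(x/n)$ in the exponent.

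\textbf{Main obstacle.} The step that needs care is checking that $e^\gamma=n^{o(1)}$. Under the hypothesis $|t'|\le n\bigl(\tfrac{\log\log n}{4}-\log\log\log n\bigr)$ we get
\[
\gamma\le e^{3}\,\frac{\log n}{(\log\log n)^{4}},
\]
so $e^\gamma=\exp\bigl(O(\log n/(\log\log n)^4)\bigr)=n^{o(1)}$. This gives the claimed bound $n^{-2+o(1)}$. A secondary point is to keep the $n^{-2+o(1)}$ error in $\alpha_\lambda$ uniform in $x$, which Lemma~\ref{lem:eigenvalue-transp-shuffle} provides.
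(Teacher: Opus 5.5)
Your proposal is correct and follows the paper's proof essentially step for step. You identify $\beta_{\lambda^{(1)}}^{\,s}=u^t$ with $u=(1-x/n)^2$, reuse the right-endpoint estimate \eqref{eq:b=n-x/n-squared-diff} from Lemma~\ref{lem:L-i>1}, bound $d_{\lambda^{(1)}}\le d_\lambda$ by the branching rule, and then run the reduction \eqref{eq:start-of-reduction}--\eqref{eq:end-of-reduction}. Your explicit bound $\gamma\le e^{3}\log n/(\log\log n)^{4}$ also justifies $e^{\gamma}=n^{o(1)}$ more cleanly than the paper's remark that $\gamma=n^{o(1)}$, which would not suffice on its own.
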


\begin{proof}
    In this case,
    \(\beta_{\lambda^{(1)}}=\lambda_1/n=(n-x)/n\). Therefore,
    \[
        \sum_{\lambda \in \cL}d_\lambda d_{\lambda^{(1)}}
        \bigl(\alpha_\lambda^{\,t}-\beta_{\lambda^{(1)}}^{\,s}\bigr)^2
        \le \sum_{\lambda \in \cL}d_\lambda^2
        \bigl(\alpha_\lambda^{\,t}-(\tfrac{n-x}{n})^s\bigr)^2.
    \]
    As shown in Equation~\eqref{eq:b=n-x/n-squared-diff}, the squared
    difference is at most
    \(\exp(-4tx/n)n^{-2+o(1)}\). Following the reductions from
    Equation~\eqref{eq:start-of-reduction} to
    Equation~\eqref{eq:end-of-reduction}, we obtain
    \[
        \sum_{\lambda \in \cL}d_\lambda^2
        \bigl(\alpha_\lambda^{\,t}-(\tfrac{n-x}{n})^s\bigr)^2
        \le n^{-2+o(1)}
        \sum_{x<(\log n)^2}\frac{e^{3x-4t'x/n}}{x!}
        =n^{-2+o(1)}.
    \]
\end{proof}

The previous two lemmas control two parts of the \(\mathcal L\)-sum: the first-row corner (Lemma~\ref{lem:L-i=1}) and the lower-row corners (Lemma~\ref{lem:L-i>1}). Combining these estimates gives the following bound for the full \(\mathcal L\) regime.  
\begin{lem}[Contribution of the $\mathcal L$ regime]\label{lem:L-regime}
    Under the assumptions of Lemmas~\ref{lem:L-i=1} and~\ref{lem:L-i>1}, we have
    \[\sum_{\lambda\in\mathcal L} d_\lambda \sum_{i\in I(\lambda)} d_{\lambda^{(i)}}
    \bigl(\alpha_\lambda^{\,t}-\beta_{\lambda^{(i)}}^{\,s}\bigr)^2
    \;\le\; n^{-1+o(1)}.\]
\end{lem}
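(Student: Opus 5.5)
The bound follows by splitting the inner sum over \(I(\lambda)\) into the first-row corner \(i=1\) and the lower-row corners \(i>1\), then adding the two estimates already established. Fix \(\lambda\in\mathcal L\). Since \(\lambda_1\ge n-(\log n)^2>\lambda_2\) for \(n\) large, the first row always ends in a corner, so \(1\in I(\lambda)\). This gives the exact decomposition
\[
\sum_{i\in I(\lambda)} d_{\lambda^{(i)}}\bigl(\alpha_\lambda^{\,t}-\beta_{\lambda^{(i)}}^{\,s}\bigr)^2
= d_{\lambda^{(1)}}\bigl(\alpha_\lambda^{\,t}-\beta_{\lambda^{(1)}}^{\,s}\bigr)^2
+\sum_{\substack{i\in I(\lambda)\\ i>1}} d_{\lambda^{(i)}}\bigl(\alpha_\lambda^{\,t}-\beta_{\lambda^{(i)}}^{\,s}\bigr)^2 .
\]
We multiply by \(d_\lambda\) and sum over \(\lambda\in\mathcal L\). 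All terms are nonnegative, so the whole \(\mathcal L\)-sum is the sum of the two pieces.

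Lemma~\ref{lem:L-i=1} bounds the first piece by \(n^{-2+o(1)}\). Lemma~\ref{lem:L-i>1} bounds the second piece by \(n^{-1+o(1)}\). Both lemmas are stated under the same hypotheses: \(t=\lfloor n\log n/2\rfloor+t'\) with \(|t'|\le n(\tfrac{\log\log n}{4}-\log\log\log n)\) and \(s=2t\). These are exactly the assumptions of the present lemma.

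Adding the two bounds gives \(n^{-2+o(1)}+n^{-1+o(1)}\le n^{-1+o(1)}\), which is the claim. The only point needing care is that \(i=1\) indeed belongs to \(I(\lambda)\), so that no corner is missed and none is counted twice. This follows from the definition of \(\mathcal L\). There is no real obstacle beyond this bookkeeping; all the analytic work has already been done in the two preceding lemmas.
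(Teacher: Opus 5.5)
Your proposal is correct and matches the paper's proof: split the inner sum into the first-row corner $i=1$ and the lower-row corners $i>1$, then add the bounds from Lemmas~\ref{lem:L-i=1} and~\ref{lem:L-i>1}. Your check that $1\in I(\lambda)$ for every $\lambda\in\mathcal L$ makes explicit a piece of bookkeeping the paper leaves implicit.
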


\begin{proof}
    Split the sum into the contribution from $i=1$ and from $i>1$, and apply Lemmas~\ref{lem:L-i=1} and~\ref{lem:L-i>1}.
\end{proof}

\subsubsection{Bounding the \texorpdfstring{$\mathcal{L}'$}{L-prime} regime}
In the \(\mathcal L'\) regime, we have \(\lambda'_1 \ge n-(\log n)^2\), i.e.\ \(\lambda\) has a long first column. In this regime the relevant eigenvalues are close to \(-1\), and this is where the parity assumption on \(t\) enters: since \(s=2t\) is even, taking \(t\) even ensures that the signs of the corresponding eigenvalue powers are compatible. By transposing partitions and using the partial symmetry of the spectral data under \(\lambda\mapsto\lambda'\), we reduce the contribution of \(\mathcal L'\) to the corresponding bound for the \(\mathcal L\) regime.

We next record a simple estimate that will be used in the transposed regime.
\begin{lem}\label{lem:max-L'-bound-inner-sum}
    Let $t=\lfloor \tfrac{n\log n}{2}\rfloor+t'$ be an integer with
    $|t'|\le n(\tfrac{\log\log n}{4}-\log\log\log n)$, let
    $0\le x\le(\log n)^2$, and suppose, uniformly over this range of $x$,
    \[
        A=\exp\!\left(-\frac{2t(1+x)}{n}\right)
        \bigl(1+O(n^{-1+o(1)})\bigr).
    \]
    Uniformly for
    \(b\in[(2+x-n)/n,(2+x)/n]\),
    \[
        (A-b^{2t})^2
        \le \exp\!\left(-\frac{4t(1+x)}{n}\right)
        \bigl(1+n^{-1+o(1)}\bigr).
    \]
\end{lem}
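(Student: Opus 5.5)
The plan is to reuse the argument from the proof of Lemma~\ref{lem:L-i>1}: the map \(b\mapsto (A-b^{2t})^2\) is continuous on the closed interval \([(2+x-n)/n,(2+x)/n]\), so its maximum is attained either at an interior critical point or at an endpoint. Since \(2t\) is even, \(b^{2t}=|b|^{2t}\ge 0\), and the function depends only on \(|b|\). Write \(\lambda^{\ast}:=\max\{|b|\}\) over the interval. For large \(n\) and \(x\le(\log n)^2\), the left endpoint has absolute value \((n-2-x)/n\) and the right endpoint is \((2+x)/n\), so \(\lambda^\ast=(n-2-x)/n\). Because \(A>0\), the quantity \((A-y)^2\) for \(y=|b|^{2t}\in[0,(\lambda^\ast)^{2t}]\) is maximized at \(y=0\) or \(y=(\lambda^\ast)^{2t}\). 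Hence
\[
(A-b^{2t})^2\le \max\Bigl\{A^2,\ \bigl(A-(\lambda^{\ast})^{2t}\bigr)^2\Bigr\},
\]
and \(A^2=\exp(-4t(1+x)/n)(1+n^{-1+o(1)})\) is exactly the target bound. This is the same structure as before, and the evenness of the exponent \(2t\) is what makes the negative end harmless.

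The remaining task is to show that \((A-(\lambda^\ast)^{2t})^2\) is no larger than the same quantity. I will compare \((\lambda^\ast)^{2t}=(1-\tfrac{2+x}{n})^{2t}\) with \(A\). By \(\log(1-u)\le -u\),
\[
(\lambda^\ast)^{2t}\le \exp\!\Bigl(-\frac{2t(2+x)}{n}\Bigr)=A\,e^{-2t/n}\bigl(1+O(n^{-1+o(1)})\bigr).
\]
Since \(t=\Theta(n\log n)\), the factor \(e^{-2t/n}\) is \(n^{-1+o(1)}\), so \(0\le(\lambda^\ast)^{2t}\le A\) for large \(n\). Then \((A-(\lambda^\ast)^{2t})^2\le A^2\), which already suffices.

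I do not expect a real obstacle here. The only points needing care are uniformity in \(x\) and the edge cases. For uniformity, \(x\le(\log n)^2\) keeps \((2+x)/n\) small, so \(\lambda^\ast\in(0,1)\). For the edge cases, \(A\) must be positive, which follows from its stated asymptotic form for large \(n\); the endpoint with larger absolute value must be identified correctly; and the \(O(n^{-1+o(1)})\) error in \(A\) must be uniform over the range of \(x\), which is assumed in the statement. Squaring \(A\) converts the \((1+O(n^{-1+o(1)}))\) factor into \((1+n^{-1+o(1)})\), which gives the claim.
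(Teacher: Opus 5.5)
Your proof is correct and follows essentially the same route as the paper: both reduce the maximization of $(A-b^{2t})^2$ to the candidates $b=0$ and the endpoints, and both use the evenness of $2t$ to neutralize the negative endpoint. The only deviation is at the left endpoint. There you use the one-sided bound $(1-\tfrac{2+x}{n})^{2t}\le e^{-2t(2+x)/n}\le A$ to get $0\le b^{2t}\le A$, and hence $(A-b^{2t})^2\le A^2$ directly, whereas the paper computes that endpoint value asymptotically; this is a slightly cleaner way to reach the same conclusion.
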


\begin{proof}
    We maximize $(A-b^{2t})^2$ over the indicated interval. Its only
    possible interior critical values occur at $b=0$ or when $b^{2t}=A$.
    At a point of the latter type the squared difference is zero, while at
    $b=0$ it is
    \[
        A^2=\exp\!\left(-\frac{4t(1+x)}{n}\right)
        \bigl(1+n^{-1+o(1)}\bigr).
    \]
    At the left endpoint \(\frac{2+x-n}{n}\), we get 
    \[\left(\exp\left(\tfrac{-2t(1+x)}{n}\right)(1 + n^{-1+o(1)}) - \exp\left(\tfrac{-2t(2+x)}{n}\right)(1 + n^{-1+o(1)})\right)^2 = \exp\left(\tfrac{-4t(1+x)}{n}\right)(1 + n^{-1+o(1)}).\] 
    At the right endpoint \(\tfrac{2+x}{n}\), we have \((\tfrac{2+x}{n})^{2t} = \exp(-\Theta(n(\log n)^2))\), giving again 
    \[\exp\left(\tfrac{-4t(1+x)}{n}\right)(1 + n^{-1+o(1)}).\] 
    Therefore, \begin{equation*}
        \max_{b\in[\frac{2+x-n}{n}, \frac{2+x}{n}]}(A - b^{2t})^2 \leq \exp\left(\frac{-4t(1+x)}{n}\right)(1 + n^{-1+o(1)}).
    \end{equation*}
\end{proof}

\begin{lem}[Contribution of the $\mathcal L'$ regime]\label{lem:L'-regime}
    For even $t=\lfloor \tfrac{n\log n}{2}\rfloor+t'$ with $|t'|\le n(\tfrac{\log\log n}{4}-\log\log\log n)$ and $s=2t$, we have
    \[\sum_{\lambda \in \cL'} \;d_\lambda\;\sum_{i\in I(\lambda)} d_{\lambda^{(i)}}\, \bigl(\alpha_\lambda^{\,t}-\beta_{\lambda^{(i)}}^{\,s}\bigr)^2 \leq n^{-2+o(1)}.\]
\end{lem}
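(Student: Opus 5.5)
We reduce the $\mathcal L'$ sum to the $\mathcal L$ estimates by reindexing over transposes. Write $\mu=\lambda'$, so $\mu\in\mathcal L$, and set $x=n-\mu_1=n-\lambda'_1\le(\log n)^2$. Since $d_\lambda=d_\mu$, and by Lemma~\ref{lem:branching-rule} $d_{\lambda^{(i)}}=d_{\mu^{(\lambda_i)}}$, the corner bijection $i\leftrightarrow\lambda_i$ reindexes the inner sum over $I(\lambda)$ as a sum over $j\in I(\mu)$. Under this bijection the star eigenvalue satisfies $\beta_{\lambda^{(i)}}=\frac1n+\frac{n-1}{n}b$ with $b=-b_{\mu^{(j)}}$; equivalently $\beta_{\lambda^{(i)}}=\frac{2}{n}-\beta_{\mu^{(j)}}$. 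For the random transposition eigenvalue, Lemma~\ref{lem:eigenvalue-transp-shuffle} gives $\alpha_{\lambda}=\frac2n-\alpha_{\mu}$ (the bracketed sum changes sign). Hence
\[
\alpha_\lambda=-\Bigl(1-\tfrac{2(1+x)}{n}\Bigr)+O(n^{-2+o(1)}),
\]
and since $t$ is even,
\[
\alpha_\lambda^t=\exp\!\bigl(-2t(1+x)/n\bigr)\bigl(1+O(n^{-1+o(1)})\bigr).
\]
This is exactly the quantity $A$ of Lemma~\ref{lem:max-L'-bound-inner-sum}, and it is the one place where the parity of $t$ is used.

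Next we check the range of $\beta$. By Lemma~\ref{lem:eigenvalues-star-transp}, $\beta_{\mu^{(j)}}\in[-x/n,(n-x)/n]$, so $\beta_{\lambda^{(i)}}=\frac2n-\beta_{\mu^{(j)}}\in[(2+x-n)/n,(2+x)/n]$. This is precisely the interval in Lemma~\ref{lem:max-L'-bound-inner-sum}, and since $s=2t$, that lemma gives uniformly
\[
\bigl(\alpha_\lambda^t-\beta_{\lambda^{(i)}}^s\bigr)^2\le e^{-4t(1+x)/n}\bigl(1+n^{-1+o(1)}\bigr).
\]
Using $\sum_i d_{\lambda^{(i)}}=d_\lambda$ (Lemma~\ref{lem:branching-rule}), the whole $\mathcal L'$ sum is at most
\[
\sum_{\mu\in\mathcal L} d_\mu^2\, e^{-4t(1+x)/n}\bigl(1+n^{-1+o(1)}\bigr).
\]

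Finally we run the same reduction as in the proof of Lemma~\ref{lem:L-i>1}, from \eqref{eq:start-of-reduction} to \eqref{eq:end-of-reduction}: group by $x$, bound $d_\mu^2\le n^{2x}/x!$, and use $p(x)\le e^{3x}$ and $n^{2x}e^{-4tx/n}=e^{-4t'x/n}(1+o(1))$. The extra factor $e^{-4t/n}=n^{-2}e^{-4t'/n}(1+o(1))$ is $n^{-2+o(1)}$, because $|t'|/n=O(\log\log n)$. The remaining sum $\sum_x (e^{3-4t'/n})^x/x!\le e^{\gamma}=n^{o(1)}$, exactly as before. This yields the claimed bound $n^{-2+o(1)}$.

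The main obstacle is getting the sign and shift identities exactly right: the $2/n$ shifts in $\alpha_\lambda$ and $\beta_{\lambda^{(i)}}$, and the uniformity of the $1+O(n^{-1+o(1)})$ error after raising to the power $t$. The $O(n^{-2+o(1)})$ error in $\alpha_\lambda$ times $t=\Theta(n\log n)$ gives only $n^{-1+o(1)}$ relative error, which is what Lemma~\ref{lem:max-L'-bound-inner-sum} assumes.
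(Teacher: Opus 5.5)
Your proposal is correct and follows the paper's proof essentially step for step. You transpose to $\mu=\lambda'\in\mathcal L$, use $\alpha_\lambda=\frac2n-\alpha_\mu$ and $\beta_{\lambda^{(i)}}=\frac2n-\beta_{\mu^{(j)}}$ together with the evenness of $t$, apply Lemma~\ref{lem:max-L'-bound-inner-sum} on $[(2+x-n)/n,(2+x)/n]$, collapse the corner sum by the branching rule, and extract $e^{-4t/n}=n^{-2+o(1)}$ from the same grouping-by-$x$ estimate used in the $\mathcal L$ regime, exactly as the paper does.
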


\begin{proof}
    Let $\lambda\in\mathcal L'$, so $\lambda'_1\ge n-(\log n)^2$, and set $\mu:=\lambda'$. Then $\mu\in\mathcal L$ with $x:=n-\mu_1=n-\lambda'_1\le (\log n)^2$. The corners correspond under transposition: if $i\in I(\lambda)$ and $j:=\lambda_i$, then $j\in I(\mu)$.
    The eigenvalues satisfy transpose identities (Lemma~\ref{lem:eigenvalue-transp-shuffle} and Lemma~\ref{lem:eigenvalues-star-transp}):
    \[\alpha_\lambda=\frac{2}{n}-\alpha_\mu,
    \qquad
    \beta_{\lambda^{(i)}}=\frac{2}{n}-\beta_{\mu^{(j)}}.\]

    Since $\mu\in\mathcal L$, Lemma~\ref{lem:eigenvalue-transp-shuffle} gives
    $\alpha_\mu = 1-\frac{2x}{n}+O(n^{-2+o(1)})$, hence
    \[\alpha_\lambda = -1+\frac{2(1+x)}{n}+O(n^{-2+o(1)}). \]
    In particular, for even $t$,
    \[\alpha_\lambda^{\,t} = \exp\!\Big(-\frac{2t(1+x)}{n}\Big)\,(1+n^{-1+o(1)}).\]

    Applying Lemma~\ref{lem:eigenvalues-star-transp} to $\mu\in\mathcal L$ and using the identity $\beta_{\lambda^{(i)}}=\frac{2}{n}-\beta_{\mu^{(j)}}$, we have
    \[\frac{2+x-n}{n} \leq \beta_{\lambda^{(i)}} \leq \frac{2 + x}{n}.\] 
    Therefore, for \(\lambda \in \mathcal{L}'\), and for every admissible corner, Lemma~\ref{lem:max-L'-bound-inner-sum} gives
    \begin{equation}
        \max_{i \in I(\lambda)}(\alpha_{\lambda}^t - \beta_{\lambda^{(i)}}^s)^2 
        \leq \exp\left(\frac{-4t(1+x)}{n}\right)(1 + n^{-1+o(1)}).
    \end{equation}

    Finally, by the hook-length formula, \(d_{\lambda} = d_{\lambda'}\), and by Lemma~\ref{lem:branching-rule},
    \(d_{\lambda^{(i)}}=d_{(\lambda')^{(\lambda_i)}}\). Using the bijection $(\lambda,i)\leftrightarrow(\mu=\lambda',\,j=\lambda_i)$ between corners under transposition, we obtain
    \begin{align*}
    \sum_{\lambda\in\mathcal L'} d_\lambda \sum_{i\in I(\lambda)} d_{\lambda^{(i)}}
    \bigl(\alpha_\lambda^{\,t}-\beta_{\lambda^{(i)}}^{\,s}\bigr)^2
    &\;\le\;
    (1 + n^{-1+o(1)}) \sum_{\mu \in \cL} d_\mu \sum_{j \in I(\mu)} d_{\mu^{(j)}} \exp\left(\tfrac{-4t(1+x)}{n}\right)\\
    &\;\le\;
    e^{-4t/n}(1+n^{-1+o(1)})\sum_{\mu\in\mathcal L} d_\mu \sum_{j\in I(\mu)} d_{\mu^{(j)}}
    \exp\!\Big(-\frac{4tx}{n}\Big).
    \end{align*}
    By the branching rule, the remaining sum is at most
    \[
        \sum_{\mu\in\mathcal L}d_\mu^2
        \exp\!\left(-\frac{4tx}{n}\right).
    \]
    Grouping by $x=n-\mu_1$, using
    $d_\mu^2\le n^{2x}/x!$ and
    $p(x)\le e^{3x}$, gives
    \begin{align*}
        \sum_{\mu\in\mathcal L}d_\mu^2e^{-4tx/n}
        &\le (1+n^{-1+o(1)})
        \sum_{0\le x\le(\log n)^2}
        \frac{e^{(3-4t'/n)x}}{x!}\\
        &\le \exp\!\bigl(e^{3-4t'/n}\bigr)n^{o(1)}
        =n^{o(1)}.
    \end{align*}
    Since $e^{-4t/n}=n^{-2+o(1)}$, the $\mathcal L'$ contribution is
    $n^{-2+o(1)}$, as claimed.
\end{proof}

\subsubsection{Combining the regime bounds}
\begin{proof}[ of Theorem~\ref{thm:even-time-comparison-thm}]
    By Corollary~\ref{cor:spectral-comparison-Sn}, for any $\sigma\in\mathfrak S_n$,
    \[4\|P^t(\sigma,\cdot)-Q^{s}(\sigma,\cdot)\|_{\mathrm{TV}}^2
    \le
    \sum_{\lambda\vdash n} d_\lambda \sum_{i\in I(\lambda)} d_{\lambda^{(i)}}\bigl(\alpha_\lambda^{\,t}-\beta_{\lambda^{(i)}}^{\,s}\bigr)^2. \]
    We decompose the right-hand side into the $\mathcal S$, $\mathcal L$, and $\mathcal L'$ regimes (see~\eqref{eq:def-S-L-L'}). The corresponding contributions are bounded in Lemmas~\ref{lem:S-regime},~\ref{lem:L-regime}, and~\ref{lem:L'-regime}, yielding
    \[4\|P^t(\sigma,\cdot)-Q^{s}(\sigma,\cdot)\|_{\mathrm{TV}}^2 \le n^{-1+o(1)}.\]
    Taking square roots completes the proof.
\end{proof}

\subsection{Removing the parity constraint} \label{subsection:removing-parity-constraint}
We record the stability estimates that remove the even-time restriction: first for \(\nu_t\), then for the star walk \(Y_s\).

To apply the results of \cite{jain2024hitting}, we retain their notation (see \eqref{eq:def-t-n}, \eqref{eq:def-gamma-n-t}, Definition~\ref{def:nu-t}). In particular, recall that
\[ t_n=\lfloor n\log n/2\rfloor,\qquad t=t_n+nc_n(t), \]
with \(nc_n(t)\in\mathbb Z\), and that
\[ \gamma_{n,t}=e^{-2(t-t_n)/n}=e^{-2c_n(t)}. \]
Also recall that \(\nu_t\) is the measure obtained by first sampling the number of forced fixed points from a truncated \(\mathrm{Pois}(\gamma_{n,t})\) distribution, then choosing this set uniformly, and finally placing a uniform permutation on its complement. Thus, within the cutoff window, the dependence of \(\nu_t\) on \(t\) is captured by the Poisson parameter \(\gamma_{n,t}\). 

\begin{thm}[{\cite[Theorem 1.3]{jain2024hitting}}] \label{thm:discrete-time-approx-random-trans}
    If \(|c_n(t)| \leq (\log \log n)/4 - \log \log \log n \), then \[d_{\mathrm{TV}}(X_t, \nu_t) \leq n^{-1 + o(1)}.\]
\end{thm}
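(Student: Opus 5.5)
The plan is to use that both measures are class functions and to compare Fourier coefficients. The increment measure $\rho$ is conjugacy invariant, so the law of $X_t$ is too. The measure $\nu_t$ is also conjugacy invariant: it is a mixture over $m$ of uniform measures on $\{\sigma:\sigma|_S=\mathrm{id}\}$ with $S$ uniform of size $m$. Schur's lemma then makes every Fourier transform a scalar on each irreducible $\lambda$, namely $\alpha_\lambda^t$ for $X_t$ and some $\widehat{\nu_t}(\lambda)$ for $\nu_t$. The Plancherel/Cauchy--Schwarz bound (the group case of Lemma~\ref{lem:bounding-TV-eigen}) gives
\[
4\,d_{\mathrm{TV}}(X_t,\nu_t)^2\le \sum_{\lambda\ne(n)} d_\lambda^2\bigl(\alpha_\lambda^t-\widehat{\nu_t}(\lambda)\bigr)^2 .
\]
So the first task is to compute $\widehat{\nu_t}(\lambda)$ exactly.

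For fixed $m$, averaging $\chi_\lambda$ over a copy of $\mathfrak S_{n-m}$ (the permutations fixing $S$) gives the multiplicity of the trivial representation of $\mathfrak S_{n-m}$ in $\mathrm{Res}\,\lambda$. By iterating the branching rule (Lemma~\ref{lem:branching-rule}), this multiplicity is the number $f^{\lambda/(n-m)}$ of standard skew tableaux of shape $\lambda/(n-m)$. It vanishes unless $\lambda_1\ge n-m$. Hence
\[
\widehat{\nu_t}(\lambda)=\mathbb E\bigl[f^{\lambda/(n-M_t)}\bigr]/d_\lambda .
\]

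Next I split the partitions into the regimes $\mathcal L,\mathcal L',\mathcal S$ of \eqref{eq:def-S-L-L'}.

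\emph{Regime $\mathcal L$.} Here $x=n-\lambda_1\le(\log n)^2$, and $\bar\lambda\vdash x$ denotes $\lambda$ with its first row removed. Tableau counting (or the hook-length formula) gives
\[
d_\lambda=\binom{n}{x}d_{\bar\lambda}\bigl(1+O(x^2/n)\bigr),\qquad f^{\lambda/(n-m)}=\binom{m}{x}d_{\bar\lambda}\bigl(1+O(x^2/m)\bigr)
\]
for $m\ge x$. The truncated-Poisson factorial moment is $\mathbb E\binom{M_t}{x}=\gamma_{n,t}^x/x!$ up to negligible truncation error, and $\binom nx^{-1}=x!\,n^{-x}(1+O(x^2/n))$. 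Together these give $\widehat{\nu_t}(\lambda)=\gamma_{n,t}^x n^{-x}(1+O(x^2/n))$. On the other hand, Lemma~\ref{lem:eigenvalue-transp-shuffle} gives $\alpha_\lambda^t=e^{-2tx/n}(1+n^{-1+o(1)})=\gamma_{n,t}^x n^{-x}(1+n^{-1+o(1)})$. The squared difference is therefore $n^{-2+o(1)}\gamma_{n,t}^{2x}n^{-2x}$. Summing with $d_\lambda^2\le n^{2x}/x!$ and $p(x)\le e^{3x}$, exactly as in Lemma~\ref{lem:L-i=1}, gives $n^{-2+o(1)}e^{e^3\gamma_{n,t}^2}$. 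This is $n^{-2+o(1)}$ because $\gamma_{n,t}\le(\log n)^{1/2}(\log\log n)^{-2}$ in the allowed window.

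\emph{Regime $\mathcal L'$.} Here $\widehat{\nu_t}(\lambda)=0$ for $n$ large, because $\lambda_1\le(\log n)^2$ while $M_t\le (\log n)^2$ except with superpolynomially small probability. By the transpose identity $\alpha_\lambda=2/n-\alpha_{\lambda'}$ we get $|\alpha_\lambda|^t\le e^{-2t(1+x)/n}(1+o(1))$. The bookkeeping of Lemma~\ref{lem:L'-regime} then bounds this contribution by $n^{-2+o(1)}$.

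\emph{Regime $\mathcal S$.} The $\alpha$ part is $n^{-2+o(1)}$ by Lemma~\ref{lem:random-transp-eigenval-bound}. For the $\nu$ part I use
\[
\sum_{\lambda\in\mathcal S}d_\lambda^2\widehat{\nu_t}(\lambda)^2\le \mathbb E\Bigl[\sum_{\lambda:\,\lambda_1\ge n-M}\bigl(f^{\lambda/(n-M)}\bigr)^2\Bigr]\cdot \mathbf 1\{\cdots\},
\]
together with the restriction $\lambda_1<n-(\log n)^2$. This forces $M_t>(\log n)^2$, an event of probability $\le(e\gamma_{n,t}/(\log n)^2)^{(\log n)^2}$. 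The inner sum is at most $\binom{n}{M}M!\le n^{M}$. A short tail computation should then make the whole term $n^{-\omega(1)}$.

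The main obstacle is the $\mathcal L$ regime. It needs a skew-tableau asymptotic $f^{\lambda/(n-m)}=\binom mx d_{\bar\lambda}(1+O(x^2/m))$ that is uniform in $m$, including small $m$ and $m<x$ (where it vanishes), and the factorial-moment identity for the truncated Poisson with error small enough to retain the $n^{-1}$ relative precision. I would first try a direct bijective count: place the $x$ non-first-row entries among the $m$ smallest labels, then subtract shapes where the skew constraint interacts with the first row. If a direct count gives only $O(x^2/m)$ for small $m$, I would absorb small $m$ by using $\mathbb P(M=m)\gamma^{-x}$ weights. Those cases are at most $\gamma^{x}$-weighted, which still sums to $n^{o(1)}$.
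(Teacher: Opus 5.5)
The paper does not prove this statement; it imports it from \cite{jain2024hitting}. Your Fourier set-up is sound: both laws are conjugacy invariant, $d_\lambda\widehat{\nu_t}(\lambda)=\mathbb E\bigl[f^{\lambda/(n-M_t)}\bigr]$, and the $\alpha$-parts can be handled via Lemma~\ref{lem:random-transp-eigenval-bound} and transposition. The genuine gap is the $\nu$-part of the $\mathcal S$ regime. The quantity you reduce to, $\mathbb E\bigl[n^{M_t}\mathbf 1\{M_t>(\log n)^2\}\bigr]$, is astronomically large. Indeed $\mathbb P(M_t=m)\,n^m\approx e^{-\gamma_{n,t}}(\gamma_{n,t}n)^m/m!$ increases in $m$ as long as $m<\gamma_{n,t}n$, and already at $m=(\log n)^2+1$ it is $\exp\bigl((1-o(1))(\log n)^3\bigr)$. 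Nor is the crude bound $n^{M}$ the only culprit; the Jensen step $(\mathbb E f)^2\le\mathbb E f^2$ is itself fatal. Take $\lambda=(n-x,\bar\lambda)$ with $(\log n)^2<x\le n/2$ and $\bar\lambda\vdash x$. Then $\lambda\in\mathcal S$ and $f^{\lambda/(n-x)}=d_{\bar\lambda}$, so
\[
\mathbb E\sum_{\lambda\in\mathcal S}\bigl(f^{\lambda/(n-M_t)}\bigr)^2\ \ge \sum_{(\log n)^2<x\le n/2}\mathbb P(M_t=x)\,x!\ \ge \sum_{(\log n)^2<x\le n/2}e^{-\gamma_{n,t}}\gamma_{n,t}^{x}.
\]
This is exponentially large whenever $\gamma_{n,t}>1$, i.e.\ for $c_n(t)<0$, which lies inside the window. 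The missing idea is to bound the mean before squaring. Write $x=n-\lambda_1$ and $\bar\lambda=(\lambda_2,\lambda_3,\dots)\vdash x$. A standard filling of $\lambda/(n-m)$ is determined by the set of labels in rows $\ge 2$ together with the standardized filling of $\bar\lambda$, so $f^{\lambda/(n-m)}\le\binom mx d_{\bar\lambda}$ for every $m$. Hence $d_\lambda\widehat{\nu_t}(\lambda)\le d_{\bar\lambda}\,\mathbb E\binom{M_t}{x}\le 2d_{\bar\lambda}\gamma_{n,t}^x/x!$. Using $\sum_{\bar\lambda\vdash x}d_{\bar\lambda}^2=x!$,
\[
\sum_{\lambda_1<n-(\log n)^2}d_\lambda^2\,\widehat{\nu_t}(\lambda)^2\le 4\sum_{x>(\log n)^2}\frac{\gamma_{n,t}^{2x}}{x!}\le 4\sum_{x>(\log n)^2}\Bigl(\frac{e\,\gamma_{n,t}^{2}}{x}\Bigr)^{x}=n^{-\omega(1)},
\]
since $\gamma_{n,t}^2\le\log n$. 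This handles $\mathcal S$ and $\mathcal L'$ at once. In $\mathcal L'$, $\widehat{\nu_t}(\lambda)$ is not exactly $0$ as you assert, only superpolynomially small.

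There is also a non sequitur in $\mathcal L$, though it is easily repaired. Your stated skew count $f^{\lambda/(n-m)}=\binom mx d_{\bar\lambda}(1+O(x^2/m))$ does not yield $\widehat{\nu_t}(\lambda)=\gamma_{n,t}^xn^{-x}(1+O(x^2/n))$. The weights $\binom mx\mathbb P(M_t=m)$ place $m-x$ at a $\mathrm{Pois}(\gamma_{n,t})$ distance above $x$. So a relative error $O(x^2/m)$ averages to something of order $x^2/(x+\gamma_{n,t})$, nowhere near $n^{-1+o(1)}$. Your reweighting fallback does not fix this. However, no error term is needed. For $x\le m\le n-\lambda_2$, the first-row cells of $\lambda/(n-m)$ occupy columns $n-m+1,\dots,n-x$, strictly to the right of every column of $\bar\lambda$. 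So the skew shape is a disjoint union of a single row and $\bar\lambda$, and $f^{\lambda/(n-m)}=\binom mx d_{\bar\lambda}$ exactly; both sides vanish for $m<x$. For $m>n-\lambda_2$, the inequality above together with the Poisson tail makes those terms negligible. With this exact identity, the rest of your $\mathcal L$ computation goes through: the hook-length estimate $d_\lambda=\binom nx d_{\bar\lambda}(1+O(x/n))$, the truncated factorial moment, the matching with $\alpha_\lambda^t$, and the bound $n^{-2+o(1)}$. Combined with the corrected $\mathcal S\cup\mathcal L'$ estimate, this gives $d_{\mathrm{TV}}(X_t,\nu_t)\le n^{-1+o(1)}$.
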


We next need a continuity estimate for the approximating measures \(\nu_t\) as the time parameter varies. Since \(\nu_t\) depends on \(t\) through the Poisson parameter \(\gamma_{n,t}\), the following lemma gives a Lipschitz-type bound in \(\gamma_{n,t}\).

\begin{lem} \label{lem:comparison-poisson-general}
    With the above notation, fix a constant \(C>0\). For times \(t,\widetilde t\) satisfying
    \[
        |c_n(t)|,|c_n(\widetilde t)|
        \le \frac{\log\log n}{4}-\log\log\log n+\frac{C}{n},
    \]
    we have
    \[
        \|\nu_t-\nu_{\widetilde t}\|_{\mathrm{TV}}
        \le |\gamma_{n,t}-\gamma_{n,\widetilde t}|+n^{-\omega(1)}.
    \]
\end{lem}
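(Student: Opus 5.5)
Both measures are produced by the same three-step procedure and differ only in the law of the number of forced fixed points. I will therefore couple them through that first step. Write \(M_t\) and \(M_{\widetilde t}\) for the truncated Poisson variables with parameters \(\gamma:=\gamma_{n,t}\) and \(\widetilde\gamma:=\gamma_{n,\widetilde t}\). Conditional on \(M=m\), both constructions output the same law: choose a uniform \(m\)-subset \(S\), then place a uniform permutation on \([n]\setminus S\) fixing \(S\). Hence \(\nu_t\) is the image of \(\mathcal L(M_t)\) under a fixed Markov kernel \(K(m,\cdot)\), and likewise \(\nu_{\widetilde t}\). Since total variation contracts under a Markov kernel,
\[
\|\nu_t-\nu_{\widetilde t}\|_{\mathrm{TV}}\le \|\mathcal L(M_t)-\mathcal L(M_{\widetilde t})\|_{\mathrm{TV}}.
\]

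Next I remove the truncation. For a Poisson variable \(Z\) and an event \(A\) (here \(A=\{Z\le n\}\)), one has \(\|\mathcal L(Z\mid A)-\mathcal L(Z)\|_{\mathrm{TV}}\le \bbP(A^c)\). The hypothesis \(|c_n|\le \tfrac{\log\log n}{4}+O(1/n)\) gives \(\gamma,\widetilde\gamma\le e^{\log\log n/2+o(1)}=(\log n)^{1/2+o(1)}\). The Poisson tail then satisfies
\[
\bbP(\mathrm{Pois}(\gamma)>n)\le \frac{\gamma^{n}}{n!}e^{\gamma}\cdot O(1)\le \Bigl(\frac{e\gamma}{n}\Bigr)^{n}=n^{-\omega(1)}.
\]
By the triangle inequality,
\[
\|\mathcal L(M_t)-\mathcal L(M_{\widetilde t})\|_{\mathrm{TV}}\le \|\mathrm{Pois}(\gamma)-\mathrm{Pois}(\widetilde\gamma)\|_{\mathrm{TV}}+n^{-\omega(1)}.
\]

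The remaining step is the standard Lipschitz bound \(\|\mathrm{Pois}(\gamma)-\mathrm{Pois}(\widetilde\gamma)\|_{\mathrm{TV}}\le |\gamma-\widetilde\gamma|\). To see it, assume without loss of generality that \(\widetilde\gamma\ge\gamma\). Write \(\mathrm{Pois}(\widetilde\gamma)=\mathrm{Pois}(\gamma)+\mathrm{Pois}(\widetilde\gamma-\gamma)\) with independent summands. Under this coupling the two variables differ only if the second summand is nonzero, which happens with probability \(1-e^{-(\widetilde\gamma-\gamma)}\le \widetilde\gamma-\gamma\). Combining the three displayed estimates gives the lemma.

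None of these steps is a genuine obstacle. The only points needing care are that the conditional-on-\(m\) construction is identical for the two times, which holds because the kernel \(K\) does not depend on \(t\), and that the truncation error is uniform over the allowed window. The extra \(C/n\) slack changes \(\gamma\) only by a factor \(1+O(1/n)\), so the tail bound above is unaffected.
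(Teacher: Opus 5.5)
Your proposal is correct and follows essentially the same route as the paper. Both reduce to the laws of the forced-fixed-point counts via the common conditional family (mixture/kernel contraction), compare each truncated law to the untruncated Poisson at cost $\mathbb{P}(\mathrm{Pois}(\gamma)>n)=n^{-\omega(1)}$, and bound $d_{\mathrm{TV}}(\mathrm{Pois}(\gamma),\mathrm{Pois}(\widetilde\gamma))\le|\gamma-\widetilde\gamma|$ by the independent-sum coupling; your tail display is loose (the natural prefactor is $e^{-\gamma}$, not $e^{\gamma}$), but the $n^{-\omega(1)}$ conclusion is unaffected.
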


\begin{proof}
    For \(0\le m\le n\), let \(\mu_m\) denote the law obtained by
    choosing \(m\) forced fixed points uniformly and placing a uniform
    permutation on their complement. The component measures \(\mu_m\) do not depend on
    time. We use the following elementary mixture bound: if \(M\) and
    \(\widetilde M\) have laws \(p\) and \(q\), respectively, and \(\mu_m\)
    is a common family of conditional laws, then
    \[
        d_{\mathrm{TV}}\left(\sum_m p_m\mu_m,\sum_m q_m\mu_m\right)
        \leq d_{\mathrm{TV}}(p,q).
    \]
 Applying this
    bound to the mixture representations of \(\nu_t\) and
    \(\nu_{\widetilde t}\) gives
    \[
        \|\nu_t-\nu_{\widetilde t}\|_{\mathrm{TV}}
        \le d_{\mathrm{TV}}\bigl(\mathcal L(M_t),
        \mathcal L(M_{\widetilde t})\bigr).
    \]

    We first compare untruncated Poisson laws. If
    \(0\le\gamma_1\le\gamma_2\), take independent random variables
    \(N_1\sim\mathrm{Pois}(\gamma_1)\) and
    \(K\sim\mathrm{Pois}(\gamma_2-\gamma_1)\). Then
    \(N_1+K\sim\mathrm{Pois}(\gamma_2)\), and hence
    \[
        d_{\mathrm{TV}}\bigl(\mathrm{Pois}(\gamma_1),
        \mathrm{Pois}(\gamma_2)\bigr)
        \le\mathbb P(K>0)
        =1-e^{-(\gamma_2-\gamma_1)}
        \le\gamma_2-\gamma_1.
    \]
    The same conclusion follows after interchanging \(\gamma_1\) and
    \(\gamma_2\).

    If \(N_\gamma\sim\mathrm{Pois}(\gamma)\), then
    \[
        d_{\mathrm{TV}}\bigl(\mathcal L(N_\gamma),
        \mathcal L(N_\gamma\mid N_\gamma\le n)\bigr)
        =\mathbb P(N_\gamma>n).
    \]
    Throughout the stated window, \(\gamma=n^{o(1)}\), and the standard
    Poisson tail estimate gives
    \[
        \mathbb P(N_\gamma>n)
        \le \left(\frac{e\gamma}{n+1}\right)^{n+1}
        =n^{-\omega(1)}.
    \]
    Applying the triangle inequality to the two truncated laws proves the
    claim.
\end{proof}

As an immediate consequence, consecutive times within the cutoff window give approximating
measures that differ by only \(n^{-1+o(1)}\). 
\begin{cor} \label{cor:one-step-comparison-poisson}
    With the above notation, suppose \(|t-\widetilde t|=1\) and both
    times satisfy the window in Lemma~\ref{lem:comparison-poisson-general}.
    Then
    \[
        \|\nu_t-\nu_{\widetilde t}\|_{\mathrm{TV}}
        \le n^{-1+o(1)}.
    \]
\end{cor}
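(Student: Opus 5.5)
The plan is to apply Lemma~\ref{lem:comparison-poisson-general} directly with \(\widetilde t=t\pm1\). Under the assumed window, both times are admissible (with \(C=1\)), so
\[
    \|\nu_t-\nu_{\widetilde t}\|_{\mathrm{TV}}
    \le |\gamma_{n,t}-\gamma_{n,\widetilde t}|+n^{-\omega(1)}.
\]
Everything therefore reduces to bounding the difference of the two Poisson parameters.

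By \eqref{eq:def-gamma-n-t}, we have \(\gamma_{n,\widetilde t}=\gamma_{n,t}\,e^{\mp 2/n}\). Hence
\[
    |\gamma_{n,t}-\gamma_{n,\widetilde t}|
    =\gamma_{n,t}\,\bigl|1-e^{\mp2/n}\bigr|
    \le \gamma_{n,t}\cdot\frac{2e^{2/n}}{n}
    =O\!\left(\frac{\gamma_{n,t}}{n}\right),
\]
using \(|1-e^{y}|\le |y|e^{|y|}\).

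It remains to control \(\gamma_{n,t}\) on the window. The bound \(|c_n(t)|\le \tfrac{\log\log n}{4}-\log\log\log n+\tfrac{C}{n}\) gives
\[
    \gamma_{n,t}=e^{-2c_n(t)}\le e^{2/n}\,\frac{(\log n)^{1/2}}{(\log\log n)^2}=n^{o(1)}.
\]
Consequently \(|\gamma_{n,t}-\gamma_{n,\widetilde t}|\le n^{-1+o(1)}\). Adding the \(n^{-\omega(1)}\) truncation error, which is absorbed, gives the claim.

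There is no real obstacle here. The only care needed is checking that \(t\pm1\) stays inside the window of Lemma~\ref{lem:comparison-poisson-general}, which is exactly what the slack term \(C/n\) provides, and that \(\gamma_{n,t}\) is subpolynomial throughout that window.
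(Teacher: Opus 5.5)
Your proposal is correct and follows essentially the paper's proof: both apply Lemma~\ref{lem:comparison-poisson-general} and bound the parameter difference by a factor of order $2/n$ times $\gamma_{n,t}$ (or the larger of the two parameters), which is $n^{o(1)}$ on the window. One small remark: the corollary already assumes both times lie in the window, so your check that $t\pm1$ stays admissible is not needed.
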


\begin{proof}
    The two Poisson parameters differ by at most
    \[
        \max\{\gamma_{n,t},\gamma_{n,\widetilde t}\}
        (e^{2/n}-1)=n^{-1+o(1)}.
    \]
    Apply Lemma~\ref{lem:comparison-poisson-general}.
\end{proof}

We now prove the analogous stability estimate for the star walk; the proof reuses the same spectral decomposition as in \textsection~\ref{subsection:reduction-to-three-regimes} with a different weight.

\begin{lem} \label{lem:one-step-comparison-star}
    Let \(s\in\mathbb Z\) satisfy
    $
        |c_n^\star(s)|
        \le \frac12\log\log n-2\log\log\log n+\frac5n.
    $
    Then
    \[\|Y_s-Y_{s+1}\|_{\mathrm{TV}}\le n^{-1+o(1)}.\]
\end{lem}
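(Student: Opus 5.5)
We bound \(\|Y_s-Y_{s+1}\|_{\mathrm{TV}}\) through the spectral decomposition of \(Q\) alone. Since \(Q\) is reversible with respect to the uniform measure and the walk is transitive, the same \(\ell^2\) argument as in Lemma~\ref{lem:bounding-TV-eigen} (applied with \(P=Q\), times \(s\) and \(s+1\)) gives
\[
4\|Y_s-Y_{s+1}\|_{\mathrm{TV}}^2\le \sum_{\lambda\vdash n} d_\lambda\sum_{i\in I(\lambda)} d_{\lambda^{(i)}}\,\beta_{\lambda^{(i)}}^{2s}\bigl(1-\beta_{\lambda^{(i)}}\bigr)^2 .
\]
The trivial block \(\lambda=(n)\), \(i=1\), has \(\beta=1\) and contributes zero. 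We then split the sum into the three regimes \(\mathcal S\), \(\mathcal L\), \(\mathcal L'\) of \eqref{eq:def-S-L-L'}, exactly as in \textsection\ref{subsection:reduction-to-three-regimes}. The target is a total of \(n^{-2+o(1)}\), whose square root gives the claim.

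\emph{\(\mathcal S\) regime.} We use \((1-\beta)^2\le 4\), since \(|\beta|\le1\), and pair negative corners with positive ones via Lemma~\ref{lem:pairing-betas}. Because \(\beta^{2s}\) is even in \(\beta\), this pairing is legitimate. The contribution is then at most \(8\sum_{\lambda\in\mathcal S}d_\lambda\sum_{i\in J(\lambda)}d_{\lambda^{(i)}}\beta^{2s}_{\lambda^{(i)}}\), which is \(n^{-\omega(1)}\) by Lemma~\ref{lem:bounding-d-di-beta-2s}. The hypothesis on \(s\) matches that lemma exactly, including the \(5/n\) slack.

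\emph{\(\mathcal L\) regime.} Write \(x=n-\lambda_1\ge1\).
\begin{itemize}
\item For \(i=1\), \(\beta=(n-x)/n\), so \((1-\beta)^2=x^2/n^2\) and \(\beta^{2s}\le e^{-2sx/n}\).
\item For \(i>1\), Lemma~\ref{lem:eigenvalues-star-transp} gives \(|\beta|\le (n-x)/n\), so \(\beta^{2s}\le e^{-2sx/n}\), while \((1-\beta)^2\le 4\). The weight \(d_{\lambda^{(i)}}\le (i\lambda_i/n)d_\lambda\) from Lemma~\ref{lem:bounding-di-vals} supplies a factor \((\log n)^6/n\).
\end{itemize}
Neither subcase alone yields \(n^{-2}\), so the \(i>1\) case needs a sharper bound on \(\beta\). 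Since \(\beta_{\lambda^{(i)}}=(\lambda_i-i+1)/n\) with \(\lambda_i\le x\), we get \(|\beta|\le x/n\le (\log n)^2/n\). Hence \(\beta^{2s}\le ((\log n)^2/n)^{2s}\), which is superpolynomially small even after multiplying by \(d_\lambda^2\le n^{2x}\). The \(i>1\) corners are therefore negligible.

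The \(i=1\) part is then bounded by
\[
\sum_{x\le(\log n)^2}p(x)\frac{n^{2x}}{x!}\frac{x^2}{n^2}e^{-2sx/n}.
\]
With \(s=n\log n+nc\), we have \(n^{2x}e^{-2sx/n}=e^{-2cx}\), which is at most \(((\log n)^{1}(\log\log n)^{-4})^x\) in the window. Following the reductions from \eqref{eq:start-of-reduction} to \eqref{eq:end-of-reduction}, the sum is \(n^{-2}\exp(n^{o(1)})\cdot\mathrm{poly}\log=n^{-2+o(1)}\). We check that \(e^{e^{3}\log n(\log\log n)^{-4}}=n^{o(1)}\), which holds.

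\emph{\(\mathcal L'\) regime.} By the transpose identity \(\beta_{\lambda^{(i)}}=2/n-\beta_{\mu^{(j)}}\), all star eigenvalues lie in \([(2+x-n)/n,(2+x)/n]\). The factor \((1-\beta)^2\) is at most \(4\), and \(\beta^{2s}\le(1-(x+2)/n)^{2s}\le e^{-2s(x+2)/n}\), which carries an extra \(e^{-4s/n}=n^{-4+o(1)}\). Summing against \(d_\mu^2\le n^{2x}/x!\) as in Lemma~\ref{lem:L'-regime} gives \(n^{-4+o(1)}\).

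This regime is where the proof differs from the even-time comparison. Parity causes no trouble here, because \(\beta^{2s}\) and \(\beta^{2s+2}\) have the same sign, so the factor \((1-\beta)^2\le4\) suffices. The main obstacle is the \(\mathcal L\) bookkeeping: making sure the \(i>1\) corners are handled by the sharp bound \(|\beta|\le x/n\), and that the \(i=1\) factor \(x^2/n^2\) is retained rather than bounded crudely.
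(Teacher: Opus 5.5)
Your proposal is correct and follows the paper's proof essentially step for step. It uses the same self-comparison spectral sum $\sum d_\lambda\sum_{i} d_{\lambda^{(i)}}\beta_{\lambda^{(i)}}^{2s}(1-\beta_{\lambda^{(i)}})^2$, handles the $\mathcal S$ regime by pairing and Lemma~\ref{lem:bounding-d-di-beta-2s}, uses the sharp bound $|\beta_{\lambda^{(i)}}|\le x/n$ for lower-row corners in $\mathcal L$, and keeps the factor $(x/n)^2$ for the first-row corner. The one cosmetic difference is that in $\mathcal L'$ you bound every corner uniformly by $|\beta|\le 1-(x+2)/n$ instead of splitting into $j=1$ and $j>1$, which yields the same $n^{-4+o(1)}$.
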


\begin{proof}
    To bound \(4\|Q^s(x,\cdot)-Q^{s+1}(x,\cdot)\|_{\mathrm{TV}}^2\), where \(Q\) is the star-transposition kernel, we use the same eigenvalue indexing and partition decomposition as in \textsection~\ref{subsection:reduction-to-three-regimes}. In particular, the nontrivial eigenvalues are indexed by pairs \((\lambda,\lambda^{(i)})\) with \(\lambda\vdash n\) and \(i\in I(\lambda)\), where \(I(\lambda)\) denotes the set of row indices whose rightmost cell is a corner. We decompose the resulting sum according to \(\lambda\in\mathcal L\), \(\mathcal L'\), or \(\mathcal S\) (see \eqref{eq:def-S-L-L'}). With this notation, we consider
    \[ \sum_{\lambda\vdash n} d_\lambda \sum_{i\in I(\lambda)} d_{\lambda^{(i)}} \beta_{\lambda^{(i)}}^{\,2s}\bigl(1-\beta_{\lambda^{(i)}}\bigr)^2, \] 
    which we analyze separately on each of the three regimes.
    
    We begin with the contribution from \(\mathcal S\), the set of partitions \(\lambda\vdash n\) such that \(\lambda_1<n-(\log n)^2\) and \(\lambda_1'<n-(\log n)^2\).
    Since \(|\beta_{\lambda^{(i)}}|\le 1\), we have \(\bigl(1-\beta_{\lambda^{(i)}}\bigr)^2\le 4\). Therefore, 
    \[ \sum_{\lambda \in \mathcal S} d_\lambda \sum_{i\in I(\lambda)} d_{\lambda^{(i)}} \beta_{\lambda^{(i)}}^{\,2s}\bigl(1-\beta_{\lambda^{(i)}}\bigr)^2 
    \le 4\sum_{\lambda \in \mathcal S} d_\lambda \sum_{i\in I(\lambda)} d_{\lambda^{(i)}} \beta_{\lambda^{(i)}}^{\,2s}.\]
    Using the pairing argument in Lemma~\ref{lem:pairing-betas}, we reduce to considering the sum over \(J(\lambda)\) as defined in \eqref{eq:J(lambda)}:
    \[4\sum_{\lambda \in \mathcal S} d_\lambda \sum_{i\in I(\lambda)} d_{\lambda^{(i)}} \beta_{\lambda^{(i)}}^{\,2s} \leq 8\sum_{\lambda \in \mathcal S} d_\lambda \sum_{i\in J(\lambda)} d_{\lambda^{(i)}} \beta_{\lambda^{(i)}}^{\,2s}. \] 
    The right-hand side is a constant multiple of the quantity bounded in Lemma~\ref{lem:bounding-d-di-beta-2s}, and is \(n^{-\omega(1)}\).

    We now bound the $\mathcal L$ contribution. Recall that in this regime $\lambda_1=n-x$ with $x\le (\log n)^2$. We split according to whether the removed corner lies in the first row ($i=1$) or below the first row ($i>1$). The case $i>1$ is negligible because the corresponding star eigenvalues are small: for $i>1$,
    \begin{equation}\label{eq:beta-small}
        |\beta_{\lambda^{(i)}}|\;\le\;\frac{x}{n}.
        \end{equation}
    Indeed, $\beta_{\lambda^{(i)}}=\frac{1}{n}(\lambda_i-i+1)$. Since $i>1$, we have $\lambda_i\le \lambda_2\le x$ and, because there are only $x$ boxes below the first row, $i\le x+1$. Hence $|\lambda_i-i+1|\le x$.

    Using \eqref{eq:beta-small}, the branching rule $\sum_{i\in I(\lambda)} d_{\lambda^{(i)}}=d_\lambda$ (Lemma~\ref{lem:branching-rule}), and the crude bound $(1-\beta_{\lambda^{(i)}})^2\le 4$, we obtain
    \[\sum_{\lambda\in\mathcal L} d_\lambda \sum_{\substack{i\in I(\lambda)\\ i>1}} d_{\lambda^{(i)}}
    \,\beta_{\lambda^{(i)}}^{\,2s}\,(1-\beta_{\lambda^{(i)}})^2
    \;\le\;
    4\sum_{\lambda\in\mathcal L} d_\lambda^2\left(\frac{x}{n}\right)^{2s}.\]
    
    We now sum over $\lambda$ by grouping according to $x=n-\lambda_1$. For fixed $x$, we have the dimension bound
    $d_\lambda^2\le \binom{n}{x}^2\,x!\le n^{2x}/x!$, and the number of partitions with $\lambda_1=n-x$ is at most $p(x)$.
    Using the Hardy--Ramanujan bound $p(x)\le \exp(\pi\sqrt{2x/3})\le e^{3x}$ for $x\le (\log n)^2$, we get
    \[
    4\sum_{\lambda\in\mathcal L} d_\lambda^2\left(\frac{x}{n}\right)^{2s}
    \;\le\;
    4\sum_{1\le x\le (\log n)^2} p(x)\,\frac{n^{2x}}{x!}\left(\frac{x}{n}\right)^{2s}
    \;\le\;
    4\sum_{1\le x\le (\log n)^2} \frac{e^{3x}n^{2x}}{x!}\left(\frac{x}{n}\right)^{2s}.
    \]
    For each $1\le x\le (\log n)^2$, the factor $(x/n)^{2s}$ dominates: since $s=n\log n+O(n\log\log n)$,
    \[
    \left(\frac{x}{n}\right)^{2s}
    =
    \exp\bigl(2s(\log x-\log n)\bigr)
    \le
    \exp\bigl(-(2+o(1))n(\log n)^2\bigr),\]
    uniformly over $x\le (\log n)^2$. The remaining factor $\frac{e^{3x}n^{2x}}{x!}$ is at most $\exp(O((\log n)^3))$ in this range,
    so each summand is $\exp(-(2+o(1))n(\log n)^2)$. Summing over at most $(\log n)^2$ values of $x$ yields
    that the $\mathcal L$ contribution with $i>1$ is $n^{-\omega(1)}$.

    We now turn to the contribution from $\mathcal L$ with $i=1$. Write $\lambda_1=n-x$ with $0\le x\le(\log n)^2$. By Lemma~\ref{lem:eigenvalues-star-transp},
    \[\beta_{\lambda^{(1)}}=\frac{1}{n}(\lambda_1-1+1)=\frac{\lambda_1}{n}=1-\frac{x}{n}.\]
    Our goal is to bound
    \[\sum_{\lambda\in\mathcal L} d_\lambda\,d_{\lambda^{(1)}}\, \beta_{\lambda^{(1)}}^{\,2s}\bigl(1-\beta_{\lambda^{(1)}}\bigr)^2.\]
    Using $d_{\lambda^{(1)}}\le d_\lambda$ (branching rule) and the standard bound $d_\lambda^2\le \binom{n}{x}^2 x!\le n^{2x}/x!$, and grouping partitions by $x=n-\lambda_1$, we obtain
    \[\sum_{\lambda\in\mathcal L} d_\lambda d_{\lambda^{(1)}}
    \beta_{\lambda^{(1)}}^{\,2s}\bigl(1-\beta_{\lambda^{(1)}}\bigr)^2
    \;\le\;
    \sum_{x\le(\log n)^2} p(x)\,\frac{n^{2x}}{x!}\,
    \left(1-\frac{x}{n}\right)^{2s}\left(\frac{x}{n}\right)^2,\]
    where $p(x)$ is the partition function. Using $p(x)\le e^{3x}$ for $x\le(\log n)^2$, it remains to control the two factors $\left(1-\frac{x}{n}\right)^{2s}$ and $\left(\frac{x}{n}\right)^2$.
    
    For the first, using $s=s_n+nc_n^\star(s)$ and the assumed window,
    \[\left(1-\frac{x}{n}\right)^{2s} 
    =\exp\!\left(-\frac{2xs}{n}\right)\bigl(1+n^{-1+o(1)}\bigr)
    \le n^{-2x}\,(\log n)^x\,(\log\log n)^{-4x}\bigl(1+n^{-1+o(1)}\bigr).
    \]
    Moreover, since $x\le(\log n)^2$,
    \[\left(\frac{x}{n}\right)^2 \le \frac{(\log n)^4}{n^2}=n^{-2+o(1)}.\]
    Combining these bounds yields
    \[\sum_{x\le(\log n)^2} p(x)\,\frac{n^{2x}}{x!}\,\left(1-\frac{x}{n}\right)^{2s}\left(\frac{x}{n}\right)^2
    \;\le\;
    n^{-2+o(1)}\sum_{x\le(\log n)^2}\frac{e^{3x}}{x!}
    \left(\frac{\log n}{(\log\log n)^4}\right)^x.\]
    The remaining sum is $n^{o(1)}$ (indeed it is dominated by $\exp\bigl(e^3\frac{\log n}{(\log\log n)^4}\bigr)$), so the $i=1$ contribution is $n^{-2+o(1)}$.

    We now turn to the contribution from $\mathcal L'$. Let $\lambda\in\mathcal L'$ and set $\mu:=\lambda'$, so that $\mu\in\mathcal L$ and $y:=n-\mu_1=n-\lambda'_1\le (\log n)^2$. As before, we transfer eigenvalue estimates to the conjugate partition $\mu$, whose first row is large.
    For $i\in I(\lambda)$, writing $j:=\lambda_i\in I(\mu)$, the transpose relation for star eigenvalues gives
    \[\beta_{\lambda^{(i)}}=\frac{2}{n}-\beta_{\mu^{(j)}}.\]
    We split into the cases $j>1$ and $j=1$.
    
    First consider $j>1$. Then by the $\mathcal L$-analysis we have $|\beta_{\mu^{(j)}}|\le y/n$. Consequently,
    \[|\beta_{\lambda^{(i)}}| =\left|\frac{2}{n}-\beta_{\mu^{(j)}}\right| \le \frac{2+y}{n}. \]
    Since
    \[
        \left(\frac{2+y}{n}\right)^{2s}
        \le \exp\bigl(-(2+o(1))n(\log n)^2\bigr),
    \]
    the $j>1$ contribution is bounded exactly as in the corresponding $\mathcal L$ case.
    
    Next consider $j=1$. In this case,
    \[
        \beta_{\lambda^{(\lambda'_1)}}
        =\frac{2}{n}-\beta_{\mu^{(1)}}
        =-\left(1-\frac{2+y}{n}\right).
    \]
    Here the factor $(1-\beta)^2$ is bounded by $4$, rather than supplying the factor $n^{-2+o(1)}$ used in the $\mathcal L$ estimate. Nevertheless, the shift by \(2/n\) in the eigenvalue gives a stronger exponential decay. By transposition, the relevant corner dimension is $d_{\mu^{(1)}}$, and hence this contribution is at most
    \begin{align*}
        4\sum_{\mu\in\mathcal L}d_\mu d_{\mu^{(1)}}
        \left(1-\frac{2+y}{n}\right)^{2s}
        &\le 4e^{-4s/n}
        \sum_{y\le(\log n)^2}\frac{e^{3y}}{y!}
        n^{2y}e^{-2sy/n}\\
        &\le n^{-4+o(1)}
        \sum_{y\le(\log n)^2}
        \frac{\exp((3-2c_n^\star(s))y)}{y!}
        =n^{-4+o(1)}.
    \end{align*}
    In the last step we used
    \[
        \exp(3-2c_n^\star(s))
        \le \frac{e^3\log n}{(\log\log n)^4}\,e^{O(1/n)},
    \]
    so the exponential generating function on the preceding line is $n^{o(1)}$.
    
    Combining the bounds for \(\mathcal S\), \(\mathcal L\), and \(\mathcal L'\),
    and taking the square root, we obtain
    \[\|Q^s(x,\cdot)-Q^{s+1}(x,\cdot)\|_{\mathrm{TV}} \le n^{-1+o(1)}, \] 
    as claimed.
\end{proof}

We now pass from the even-time comparison to the full deterministic-time approximation by matching each time in the cutoff window to a nearby even time.

\begin{proof}[ of Corollary~\ref{cor:final-tv-distance-star-walk}]
    In this proof, we combine the even-time comparison theorem (Theorem~\ref{thm:even-time-comparison-thm}), the deterministic-time approximation for random transpositions (Theorem~\ref{thm:discrete-time-approx-random-trans}), and the stability estimates for $\nu_t$ and $Y_s$ (Corollary~\ref{cor:one-step-comparison-poisson} and Lemma~\ref{lem:one-step-comparison-star}). 
    Since $t=\lceil s/2\rceil$,
    \[
        \left|c_n(t)-\frac12c_n^\star(s)\right|\le\frac1n.
    \]
    Write
    \[
        R_n:=\frac14\log\log n-\log\log\log n.
    \]
    Because \(|c_n(t)|\le R_n+1/n\) and changing time by one changes
    \(c_n\) by \(1/n\), there is an even integer \(\widehat t\) such that
    \[
        |\widehat t-t|\le2,\qquad |c_n(\widehat t)|\le R_n.
    \]
    Indeed, if \(t\) is odd, one of \(t-1,t+1\) works; if \(t\) is even,
    one may use \(t\) unless it lies outside the window, in which case
    shifting it two steps toward the window works. In particular,
    \(|2\widehat t-s|\le5\).

    The triangle inequality now gives
    \begin{align*}
        d_{\mathrm{TV}}(Y_s,\nu_t)
        &\le d_{\mathrm{TV}}(Y_s,Y_{2\widehat t})
        +d_{\mathrm{TV}}(Y_{2\widehat t},X_{\widehat t})
        +d_{\mathrm{TV}}(X_{\widehat t},\nu_{\widehat t})
        +d_{\mathrm{TV}}(\nu_{\widehat t},\nu_t).
    \end{align*}
    Every intermediate star-shuffle time differs from \(s\) by at most
    five, so Lemma~\ref{lem:one-step-comparison-star} bounds the first
    term by \(n^{-1+o(1)}\). The second term is \(n^{-1/2+o(1)}\) by
    Theorem~\ref{thm:even-time-comparison-thm}, and the third is
    \(n^{-1+o(1)}\) by Theorem~\ref{thm:discrete-time-approx-random-trans}.
    Finally, at most two applications of
    Corollary~\ref{cor:one-step-comparison-poisson} bound the last term by
    \(n^{-1+o(1)}\). Summing these bounds proves the result.
\end{proof}

With the deterministic-time approximation in hand, we now turn to the conditioning argument needed for hitting-time mixing.

\section{Untouched sets as an approximate sufficient statistic} \label{sec:approximate-sufficient-statistic}
The goal of this section is to show that, for times in the star-shuffle window, the untouched set is an approximate sufficient statistic: conditioned on a small untouched set \(T\) the law of the walk is close to the uniform law on permutations fixing \(T\). The notation and organization in this section follow Jain--Sawhney \cite{jain2024hitting} closely, but the reduced distributions and conditioning statements must be adapted to the star-transposition setting.

The proof estimates \(\mathcal G(T)\) and \(\mathcal F(T)\), uses inclusion--exclusion to condition on the exact untouched set, and compares the resulting law with the reduced-space approximation on \([n]\setminus T\).

\subsection{Untouched set notation}
We begin by introducing notation that will be used throughout this section. A superscript or subscript \(\star\) denotes the star-shuffle analogue of the corresponding object in \cite{jain2024hitting}. Let \((1,i_1),\ldots,(1,i_s)\) denote the (random) sequence of star transpositions chosen by the star-transposition walk up to time \(s\). For \(1\le M\le n\), write
\[
    [M]_\star:=\{2,\ldots,M\},
\]
with \([1]_\star=\varnothing\); in particular, \([n]_\star=\{2,\ldots,n\}\) is the set of non-hub elements. For a subset \(T\subseteq[n]_\star\), let \(\mathcal G(T)=\mathcal G^s(T)\) denote the event that
\[ \{i_1,\ldots,i_s\}\cap T =\varnothing, \]
that is, the walk does not touch any element of \(T\) up to and including time \(s\). 
Define \(\mathcal F(T)=\mathcal F^s(T)\) by
\[\mathcal F(T) = \mathcal G(T)\cap \bigcap_{j\in [n]_{\star}\setminus T} \mathcal G^{c}(\{j\}), \]
so that \(\mathcal F(T)\) is the event that the walk touches every element of \([n]_\star \setminus T\) and does not touch \(T\) by time \(s\). For convenience, when \(T=[M]_\star\), we write \(\mathcal G(M)\) and \(\mathcal F(M)\) in place of \(\mathcal G([M]_\star)\) and \(\mathcal F([M]_\star)\).

Recall that \(s_n=\lfloor n\log n\rfloor\), and write
\begin{equation}\label{eq:def-of-sn}
    s=s_n+nc_n^\star(s),\qquad nc_n^\star(s)\in\mathbb Z.
\end{equation}
Let
\begin{equation}\label{eq:def-of-gamma-n-s-star}
    \gamma_{n,s}^\star=e^{-(s-s_n)/n}=e^{-c_n^\star(s)},
\end{equation}
and set
\begin{equation}\label{eq:def-K-n}
    K_n:=\left\lceil\frac{\log n}{\log\log n}\right\rceil.
\end{equation}
Let \(\nu^\star_s\) be the distribution given by the following procedure. First, sample \(M_s \in \{0, 1, \ldots, n-1\}\) according to the distribution 
\[\bbP(M_s = m) = \frac{\bbP(\mathrm{Pois}(\gamma^\star_{n,s})=m)}{\bbP(\mathrm{Pois}(\gamma^\star_{n,s})\leq n-1)}.\]
Then, sample a random set \(S_s \subset [n]_\star\) of size \(M_s\) uniformly at random. The set \(S_s\) is the set of forced fixed points in \([n]_{\star}\). Finally, sample a random element of \(\mathfrak{S}_{[n] \setminus S_s}\) uniformly and view it as an element of \(\mathfrak{S}_n\) by fixing all the elements in \(S_s\).
If \(t=\lceil s/2\rceil\), then
\(\gamma_{n,t}=(1+O(n^{-1}))\gamma_{n,s}^\star\), since both
\(2\lceil s/2\rceil-s\) and \(s_n-2t_n\) belong to \(\{0,1\}\).
Thus \(\nu_s^\star\) is the non-hub analogue of \(\nu_t\), with the
Poisson parameter changed only by a relative \(O(n^{-1})\) factor.

\subsection{Estimates for \texorpdfstring{\(\mathcal G(T)\) and \(\mathcal F(T)\)}{G(T) and F(T)}}\label{subsection:G-F-estimates}

The next lemma packages the probability estimates for the events \(\cG(T)\) and \(\cF(M)\) needed in the remainder of the argument. These estimates will later be combined with inclusion--exclusion to control the law of the star transposition shuffle at time \(s\) conditioned on a small untouched set. It is the star-transposition analogue of \cite[Lemma 2.2]{jain2024hitting}, with only minor changes to adapt to the present setting.
\begin{lem} \label{lem:relationship-f(T)-g(T)}
    Let \(s_n\), \(s\), \(\gamma_{n,s}^\star\), and \(c_n^\star(s)\) be
    defined as in equations \eqref{eq:def-of-sn} and
    \eqref{eq:def-of-gamma-n-s-star}. Let \(1\le M_n\le n\) and let
    \(T_n \subseteq [n]_\star\) satisfy
    \([M_n]_\star\subseteq T_n\) and \(|T_n|=n^{o(1)}\). Suppose that
    \(|c_n^\star(s)| \leq (\log \log n)/2 - 4 \log \log \log n\). Then
    the following hold:
    \begin{enumerate}
        \item \(\bbP[\cG(T_n)] = (1 + O(n^{-1+o(1)})) \, \bbP[\cG(M_n)] \, (\gamma_{n, s}^{\star})^{|T_n| - M_n + 1} \, n^{M_n - 1 - |T_n|}\),
        \item \(\bbP[\cF(M_n)] = (1 + O(n^{-1+o(1)})) \bbP[\cG(M_n)] \exp(-\gamma_{n, s}^\star)\),
        \item for every \(L_n=n^{o(1)}\),
        \(\sum_{S \supseteq [M_n]_\star,\, |S| \leq L_n} \bbP[\cG(S)] \leq n^{o(1)} \bbP[\cF(M_n)]\).
    \end{enumerate}
\end{lem}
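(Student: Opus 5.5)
Everything here is explicit, because the indices \(I_1,\dots,I_s\) are i.i.d. uniform on \([n]\). For any \(T\subseteq[n]_\star\),
\[
    \mathbb P[\mathcal G(T)]=\Bigl(1-\frac{|T|}{n}\Bigr)^{s}.
\]
I would therefore prove all three items by direct computation with this formula and with inclusion--exclusion, mimicking \cite[Lemma 2.2]{jain2024hitting}. Throughout, \(s=n\log n+nc_n^\star(s)+O(1)\), so
\[
    e^{-s/n}=(1+O(n^{-1}))\,\gamma^\star_{n,s}\,n^{-1}.
\]

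\textbf{Item 1.} Set \(k=|T_n|\) and \(m=M_n-1=|[M_n]_\star|\). Then
\[
    \frac{\mathbb P[\mathcal G(T_n)]}{\mathbb P[\mathcal G(M_n)]}
    =\Bigl(\frac{n-k}{n-m}\Bigr)^{s}
    =\Bigl(1-\frac{k-m}{n-m}\Bigr)^{s}.
\]
Using \(\log(1-u)=-u+O(u^2)\) with \(u=(k-m)/(n-m)\), \(k=n^{o(1)}\) and \(s=O(n\log n)\), the exponent equals
\[
    -\frac{s(k-m)}{n}+O\!\Bigl(\frac{s(k-m)m}{n^2}+\frac{s(k-m)^2}{n^2}\Bigr)
    =-\frac{s(k-m)}{n}+O(n^{-1+o(1)}).
\]
Finally, \(e^{-s(k-m)/n}=(\gamma^\star_{n,s}/n)^{k-m}(1+O(n^{-1}))^{k-m}\), and \((k-m)/n=n^{-1+o(1)}\). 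This gives item 1, since \(k-m=|T_n|-M_n+1\).

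There is one caveat. If \(M_n\) is not \(n^{o(1)}\), the hypothesis \(|T_n|=n^{o(1)}\) still forces \(m\le k=n^{o(1)}\), so the error terms above are fine.

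\textbf{Item 2.} By inclusion--exclusion over the untouched elements of \([n]_\star\setminus[M_n]_\star\), a set of size \(N=n-M_n\),
\[
    \mathbb P[\mathcal F(M_n)]=\sum_{j=0}^{N}(-1)^j\binom{N}{j}\Bigl(1-\frac{m+j}{n}\Bigr)^{s}.
\]
Dividing by \(\mathbb P[\mathcal G(M_n)]\), the \(j\)-th term is \((-1)^j\binom Nj(1-\frac{j}{n-m})^s\). I would truncate at \(j\le J:=\lceil(\log n)^{2}\rceil\), or any \(J\) with \(J=n^{o(1)}\) and \(\gamma^J/J!=n^{-\omega(1)}\). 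The tail is handled by the Bonferroni inequalities, or by bounding \(\binom Nj(1-j/(n-m))^s\le (e\gamma^\star_{n,s}/j)^j\), which is superpolynomially small since \(\gamma^\star_{n,s}\le (\log n)^{1/2}\) in the window.

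For \(j\le J\), item 1's computation gives
\[
    \binom Nj\Bigl(1-\frac{j}{n-m}\Bigr)^s=(1+O(n^{-1+o(1)}))\frac{(\gamma^\star_{n,s})^j}{j!},
\]
using \(\binom Nj=\frac{n^j}{j!}(1+O(j^2/n+jm/n))\). Summing recovers \(e^{-\gamma^\star_{n,s}}\) up to a relative error. The \textbf{main obstacle} is exactly this point. The alternating sum has terms of size up to \(e^{\gamma}\), while the target is \(e^{-\gamma}\), so a relative \(n^{-1+o(1)}\) error per term yields an absolute error of order \(n^{-1+o(1)}e^{\gamma}\). Dividing by \(e^{-\gamma}\) leaves a relative error \(n^{-1+o(1)}e^{2\gamma}\).

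That is harmless because the window gives \(\gamma^\star_{n,s}\le e^{(\log\log n)/2}=(\log n)^{1/2}\), so \(e^{2\gamma}=n^{o(1)}\). This is where the hypothesis \(|c_n^\star(s)|\le \frac12\log\log n-4\log\log\log n\) enters. If sharper control were needed, I would instead keep the per-term error in the form \((\gamma(1+\epsilon))^j/j!\) and sum it into \(e^{-\gamma(1+\epsilon)}\).

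\textbf{Item 3.} Group the sets \(S\) by \(r=|S|-(M_n-1)\). There are at most \(n^{r}/r!\) supersets of a given size, and by item 1 each has probability \((1+o(1))\mathbb P[\mathcal G(M_n)](\gamma^\star_{n,s}/n)^{r}\). The sum is therefore at most
\[
    (1+o(1))\,\mathbb P[\mathcal G(M_n)]\sum_{r\ge0}\frac{(\gamma^\star_{n,s})^r}{r!}
    =(1+o(1))\,\mathbb P[\mathcal G(M_n)]\,e^{\gamma^\star_{n,s}}.
\]
Applying item 1 uniformly requires \(|S|\le L_n=n^{o(1)}\), which is why the sum is restricted. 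By item 2 this equals \(e^{2\gamma^\star_{n,s}}(1+o(1))\,\mathbb P[\mathcal F(M_n)]\), and \(e^{2\gamma^\star_{n,s}}=n^{o(1)}\) again by the window. Hence item 3 holds.
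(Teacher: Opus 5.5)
Your proposal is correct and follows essentially the same route as the paper: item 1 from $(1-x/n)^s=(1+O(n^{-1+o(1)}))e^{-sx/n}$ for $x=n^{o(1)}$, and item 2 by inclusion--exclusion truncated after about $(\log n)^2$ terms, with per-term relative errors summed in absolute value to $n^{-1+o(1)}e^{\gamma}$ and absorbed via $e^{2\gamma}=n^{o(1)}$. This is exactly the cancellation issue the paper flags; item 3 follows, as in the paper, by grouping supersets by size and combining items 1 and 2, and your direct tail bound $(e\gamma/j)^j$ is a harmless alternative to the paper's Bonferroni truncation.
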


\begin{proof}
    Write \(M=M_n\), \(T=T_n\), and \(\gamma=\gamma_{n,s}^\star\). Uniformly for \(x=n^{o(1)}\),
    \[
        \left(1-\frac{x}{n}\right)^s
        =\bigl(1+O(n^{-1+o(1)})\bigr)\exp\!\left(-\frac{sx}{n}\right).
    \]
    Since \(|[M]_\star|=M-1\), applying this estimate first with \(x=|T|\) and then with \(x=M-1\) gives
    \begin{equation}\label{eq:G(T)-G(M)-relationship}
        \bbP(\cG(T))
        =\bigl(1+O(n^{-1+o(1)})\bigr)\bbP(\cG(M))
        \left(\frac{\gamma}{n}\right)^{|T|-M+1}.
    \end{equation}
    This proves the first claim.

    For the second claim, inclusion--exclusion gives the exact indicator identity
    \begin{equation}\label{eq:g-and-f-relationship-indicator}
        \mathbf 1_{\cF(M)}
        =\sum_{[M]_\star\subseteq T\subseteq[n]_\star}
        (-1)^{|T|-M+1}\mathbf 1_{\cG(T)}.
    \end{equation}
    Apply the Bonferroni inequalities to truncate this sum after \(2k+1\) terms, where \(k=\lfloor(\log n)^2\rfloor\). Taking expectations and using \eqref{eq:G(T)-G(M)-relationship}, the upper and lower bounds differ by at most
    \[
        \bigl(1+O(n^{-1+o(1)})\bigr)\bbP(\cG(M))
        \binom{n-M}{2k+1}\left(\frac{\gamma}{n}\right)^{2k+1}.
    \]
    In the stated window, \(\gamma\le\sqrt{\log n}\), so this error is \(n^{-\omega(1)}\bbP(\cG(M))\). Moreover, uniformly for \(j\le2k+1\),
    \[
        \binom{n-M}{j}n^{-j}
        =\frac{1+O(n^{-1+o(1)})}{j!}.
    \]
    The absolute sum of all approximation errors in the truncated
    inclusion--exclusion series, after division by \(\bbP(\cG(M))\), is at
    most
    \[
        n^{-1+o(1)}\sum_{j\le2k+1}\frac{\gamma^j}{j!}
        \le n^{-1+o(1)}e^\gamma.
    \]
    Since the target is \(e^{-\gamma}\) and \(e^{2\gamma}=n^{o(1)}\),
    this is a relative error \(n^{-1+o(1)}\). The omitted
    exponential-series tail and the Bonferroni gap are
    \(n^{-\omega(1)}e^{-\gamma}\). Thus the alternating sum equals
    \(\bigl(1+O(n^{-1+o(1)})\bigr)e^{-\gamma}\), proving the second
    claim without passing an uncontrolled relative error through the
    cancellation.

    Finally, group the sets in the third sum according to \(r=|S|-M+1\). Parts~(1) and~(2) imply
    \begin{align*}
        \sum_{\substack{S\supseteq[M]_\star\\ |S|\le L_n}}
        \bbP(\cG(S))
        &\le \bigl(1+O(n^{-1+o(1)})\bigr)\bbP(\cF(M))e^\gamma
        \sum_{r\ge0}\binom{n-M}{r}\left(\frac{\gamma}{n}\right)^r \\
        &\le \bbP(\cF(M))\exp(O(\gamma))
        =n^{o(1)}\bbP(\cF(M)),
    \end{align*}
    because \(\gamma\le\sqrt{\log n}\).
\end{proof}

\subsection{Inclusion--exclusion identity}\label{subsection:inclusion-exclusion}
For every positive integer \(m\), write \(s_m:=\lfloor m\log m\rfloor\).
For any \(T \subseteq [n]_\star\), define the auxiliary real parameter
\[s_T = s_{n - |T|} + (n - |T|) c_n^\star(s).\]
It is used only to parameterize the approximating measure, so no integrality assumption on \(s_T\) is needed.
Analogous to our definition of \(\gamma_{n, s}^\star\), we define 
\begin{equation} \label{eq:gamma-n-T-s-T}
\gamma_{n - |T|, s_T}^\star = \exp(-(s_{T} - s_{n-|T|})/(n-|T|)) = \exp(-c_n^\star(s)).
\end{equation}
Therefore, \(\gamma_{n - |T|, s_T}^\star = \gamma_{n, s}^\star\).
The following reduced approximation is the star-transposition analogue of
\cite[Definition 2.3]{jain2024hitting}.
\begin{defn} \label{def:nu-star-T}
    Let \(\nu^{\star, T} = \nu^{\star, T}_{s_T}\) denote the distribution on \(\mathfrak{S}_{[n] \setminus T}\), which is defined as the same distribution as \(\nu^\star_s\) except that time \(s\) is replaced by \(s_T\) and the ground set \([n]\) is replaced by \([n]\setminus T\). Using the natural inclusion \(\mathfrak{S}_{[n]\setminus T} \hookrightarrow \mathfrak{S}_n\), we view \(\nu^{\star, T}\) as a distribution on \(\mathfrak{S}_n\).

    In particular, we first sample \(M_{s_T} \in \{0, 1, \ldots, n-|T|-1\}\) according to the distribution 
    \[\bbP(M_{s_T} = m) = \frac{\bbP(\mathrm{Pois}(\gamma^\star_{n-|T|,s_T})=m)}{\bbP(\mathrm{Pois}(\gamma^\star_{n-|T|,s_T})\leq n-|T|-1)}.\]
    Then, sample a random set \(S_{s_T} \subset [n]_\star \setminus T\) of size \(M_{s_T}\) uniformly at random. Finally, sample a random element of \(\mathfrak{S}_{[n] \setminus (T \cup S_{s_T})}\) uniformly and view it as an element of \(\mathfrak{S}_n\) by fixing all the elements in \(T \cup S_{s_T}\). 
\end{defn}

The next lemma is the inclusion--exclusion step. It shows that the alternating sum of the terms \(\mathbb P(\mathcal G(T))\,\nu^{\star,T}(\sigma_n)\) collapses to the main term \(\mathbb P(\mathcal F(M))/(n-M+1)!\). It is the analogue of \cite[Lemma 2.4]{jain2024hitting}, with minor changes to adapt to the star-transposition setting.

\begin{lem}\label{lem:relate-f-to-g-dot-nu}
    Suppose that
    $
        |c_n^\star(s)|\le \frac12\log\log n-4\log\log\log n.$
    Let \(\mathrm{Fix}_{\star}(\sigma)\) denote the set of fixed points of \(\sigma\) excluding \(1\). For any permutation \(\sigma_n\in\mathfrak S_n\) and any integer \(1\le M_n\le n\) satisfying \(M_n=n^{o(1)}\), \([M_n]_{\star}\subseteq\mathrm{Fix}_{\star}(\sigma_n)\), and
    \[
        |\mathrm{Fix}_{\star}(\sigma_n)\setminus[M_n]_\star|\le K_n,
    \]
    we have
    \[\sum_{[M_n]_\star \subseteq T \subseteq \mathrm{Fix}_{\star}(\sigma_n)} (-1)^{|T|-M_n+1} \, \mathbb{P}(\mathcal{G}(T)) \, \nu^{\star, T}(\sigma_n) 
    = \left(1 + O(n^{-1/2+o(1)})\right) \frac{\mathbb{P}(\mathcal{F}(M_n))}{(n-M_n+1)!}.\]
\end{lem}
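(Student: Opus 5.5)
Write \(M=M_n\), \(F=\mathrm{Fix}_\star(\sigma_n)\), \(\gamma=\gamma^\star_{n,s}\), and \(r=|T|-M+1\) for \([M]_\star\subseteq T\subseteq F\). The plan is to compute \(\nu^{\star,T}(\sigma_n)\) explicitly, substitute Lemma~\ref{lem:relationship-f(T)-g(T)}(1) for \(\mathbb P(\mathcal G(T))\), and recognize the resulting alternating sum over \(T\) as a product of two exponential-type series in \(\gamma\). The main term should then collapse, via Lemma~\ref{lem:relationship-f(T)-g(T)}(2), to \(\mathbb P(\mathcal F(M))/(n-M+1)!\).

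\textbf{Computing \(\nu^{\star,T}(\sigma_n)\).} Since \(T\subseteq F\), the permutation \(\sigma_n\) fixes \(T\). Its non-hub fixed points in \([n]\setminus T\) form \(F\setminus T\), a set of size \(f-|T|\), where \(f:=|F|\le M-1+K_n=n^{o(1)}\). From Definition~\ref{def:nu-star-T}, conditioning on the forced set \(S\subseteq F\setminus T\) gives
\[
\nu^{\star,T}(\sigma_n)=\frac{1}{Z_T}\sum_{m=0}^{f-|T|}\frac{e^{-\gamma}\gamma^m}{m!}\binom{f-|T|}{m}\binom{n-1-|T|}{m}^{-1}\frac{1}{(n-|T|-m)!},
\]
where \(Z_T=1-n^{-\omega(1)}\) because \(\gamma\le\sqrt{\log n}\). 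The key observation is that
\[
\binom{n-1-|T|}{m}^{-1}\frac{1}{(n-|T|-m)!}=\frac{m!\,(n-1-|T|-m)!}{(n-1-|T|)!\,(n-|T|-m)!}.
\]
Writing \(N_T:=n-|T|\), this equals \(\dfrac{m!}{N_T!}\cdot\dfrac{N_T}{N_T-m}\), and the last factor is \(1+O(n^{-1+o(1)})\) since \(m\le f=n^{o(1)}\). Hence
\[
\nu^{\star,T}(\sigma_n)=\bigl(1+O(n^{-1+o(1)})\bigr)\frac{e^{-\gamma}}{N_T!}\sum_{m}\binom{f-|T|}{m}\gamma^m=\bigl(1+O(n^{-1+o(1)})\bigr)\frac{e^{-\gamma}(1+\gamma)^{f-|T|}}{N_T!}.
\]

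\textbf{Assembling the alternating sum.} By Lemma~\ref{lem:relationship-f(T)-g(T)}(1), \(\mathbb P(\mathcal G(T))=(1+O(n^{-1+o(1)}))\,\mathbb P(\mathcal G(M))\,\gamma^{r}n^{-r}\). Using \((n-M+1)!/(n-|T|)!=n^{r}(1+O(n^{-1+o(1)}))\), the powers of \(n\) cancel, so each term becomes
\[
\bigl(1+O(n^{-1+o(1)})\bigr)\frac{\mathbb P(\mathcal G(M))\,e^{-\gamma}}{(n-M+1)!}\,(-\gamma)^{r}(1+\gamma)^{k-r},\qquad k:=f-M+1\le K_n.
\]
Summing over the \(\binom{k}{r}\) sets \(T\) of each size gives the exact main term \(\sum_r\binom kr(-\gamma)^r(1+\gamma)^{k-r}=1\). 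With Lemma~\ref{lem:relationship-f(T)-g(T)}(2), this yields \(\mathbb P(\mathcal F(M))/(n-M+1)!\).

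\textbf{Main obstacle: the errors.} The multiplicative errors cannot be pushed through the cancellation, so I bound them absolutely. Their total is at most
\[
n^{-1+o(1)}\sum_r\binom kr\gamma^r(1+\gamma)^{k-r}=n^{-1+o(1)}(1+2\gamma)^{K_n}
\]
relative to the main term, further multiplied by \(e^{\gamma}\) when converting \(\mathbb P(\mathcal G(M))\) to \(\mathbb P(\mathcal F(M))\). In the window, \(\gamma\le\sqrt{\log n}/(\log\log n)^{4}\), so
\[
(1+2\gamma)^{K_n}\le\exp\Bigl(\tfrac{\log n}{\log\log n}\cdot\tfrac12\log\log n\,(1+o(1))\Bigr)=n^{1/2+o(1)}.
\]
This is exactly where the \(n^{-1/2+o(1)}\) loss arises, and it requires the precise choice of \(K_n\) and the \(\log\log\log n\) margin. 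I expect verifying this bookkeeping, including that all the \(O(n^{-1+o(1)})\) factors are uniform in \(T\), to be the delicate step.
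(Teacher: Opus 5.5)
Your proposal is correct and follows essentially the same route as the paper. The shared steps are the explicit formula $\nu^{\star,T}(\sigma_n)=(1+O(n^{-1+o(1)}))\,e^{-\gamma}n^{r}(1+\gamma)^{k-r}/(n-M+1)!$, substitution of Lemma~\ref{lem:relationship-f(T)-g(T)}(1)--(2), the exact collapse $(1+\gamma-\gamma)^{k}=1$, and the absolute error bound $n^{-1+o(1)}(1+2\gamma)^{K_n}\le n^{-1/2+o(1)}$. The extra factor $e^{\gamma}$ you mention is not actually needed, because the $e^{-\gamma}$ from $\nu^{\star,T}$ pairs with $\mathbb{P}(\mathcal{G}(M))$ to give $\mathbb{P}(\mathcal{F}(M))$ directly; since $e^{\gamma}=n^{o(1)}$, including it is harmless.
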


\begin{proof}
    Set
    \[
        b:=M-1,\qquad f:=|\mathrm{Fix}_\star(\sigma)|,\qquad
        D:=f-b\le K_n,\qquad \gamma:=\gamma_{n,s}^\star.
    \]
    For \(T\subseteq\mathrm{Fix}_\star(\sigma)\) containing \([M]_\star\), write
    \(q:=|T|-b\). By definition,
    \[\nu^{\star,T}(\sigma)
    = \sum_{r=0}^{D-q}
    \frac{\mathbb P(\mathrm{Pois}(\gamma)=r)}
    {\mathbb P(\mathrm{Pois}(\gamma)\le n-1-b-q)}
    \frac{\binom{D-q}{r}}{\binom{n-1-b-q}{r}}
    \frac{1}{(n-b-q-r)!}.\]
    The Poisson truncation probability is \(1-n^{-\omega(1)}\). Uniformly for
    \(b,q,r=n^{o(1)}\), the exact factorial ratio gives
    \[
        \frac{\mathbb P(\mathrm{Pois}(\gamma)=r)}
        {\mathbb P(\mathrm{Pois}(\gamma)\le n-1-b-q)
        \binom{n-1-b-q}{r}(n-b-q-r)!}
        =
        \left(1+O(n^{-1+o(1)})\right)
        \frac{e^{-\gamma}\gamma^r n^q}{(n-b)!}.
    \]
    Consequently,
    \begin{equation}\label{eq:rewrite-nu-star-T}
        \nu^{\star,T}(\sigma)
        =
        \left(1+O(n^{-1+o(1)})\right)
        \frac{e^{-\gamma}n^q}{(n-b)!}
        \sum_{r=0}^{D-q}\binom{D-q}{r}\gamma^r.
    \end{equation}

    Lemma~\ref{lem:relationship-f(T)-g(T)} gives, uniformly for \(q\le D\),
    \[
        \mathbb P(\mathcal G(T))
        =\left(1+O(n^{-1+o(1)})\right)
        \mathbb P(\mathcal G(M))\left(\frac{\gamma}{n}\right)^q
    \]
    and
    \[
        \mathbb P(\mathcal F(M))
        =\left(1+O(n^{-1+o(1)})\right)
        \mathbb P(\mathcal G(M))e^{-\gamma}.
    \]
    We cannot simply pull these relative errors through the alternating sum.
    Instead, summing their absolute contributions and using
    \eqref{eq:rewrite-nu-star-T} shows that the normalized accumulated error is at most
    \begin{align*}
        n^{-1+o(1)}
        \sum_{q=0}^{D}\binom Dq\gamma^q
        \sum_{r=0}^{D-q}\binom{D-q}{r}\gamma^r
        &=n^{-1+o(1)}(1+2\gamma)^D\\
        &\le n^{-1/2+o(1)}.
    \end{align*}
    For the last inequality, the window assumption gives
    \(\gamma\le\sqrt{\log n}/(\log\log n)^4\), while
    \(D\le K_n\); hence \((1+2\gamma)^D\le n^{1/2+o(1)}\).

    After removing this error, the normalized main term is
    \[
        \sum_{q=0}^{D}\binom Dq(-\gamma)^q(1+\gamma)^{D-q}
        =(1+\gamma-\gamma)^D=1.
    \]
    Multiplying back by
    \(\mathbb P(\mathcal F(M))/(n-b)!\) proves the claim.
\end{proof}

\subsection{Conditioned approximation on \texorpdfstring{\([n]\setminus T\)}{the reduced ground set}}\label{subsection:conditioned-approximation}
\begin{lem} \label{lem:TV-of-mixtures}
    Let $S$ be a random variable taking values in a finite set $\mathcal{S}$, and let $A$ and $B$ be random variables taking values in a common finite space $\Omega$. Suppose $A$ and $B$ are defined on the same probability space. Then
    \[d_{\mathrm{TV}}(\mathcal{L}(A), \mathcal{L}(B)) \le 
    \mathbb{E}\!\left[d_{\mathrm{TV}}\big(\mathcal{L}(A\mid S), \mathcal{L}(B\mid S)\big)\right].\]
\end{lem}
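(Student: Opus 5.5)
The plan is to condition on $S$ and exploit the elementary coupling characterization of total variation together with convexity. Since $\Omega$ and $\mathcal S$ are finite, write $p_s:=\mathbb P(S=s)$ and restrict to those $s$ with $p_s>0$. By the law of total probability, for every $\omega\in\Omega$,
\[
\mathbb P(A=\omega)=\sum_{s} p_s\,\mathbb P(A=\omega\mid S=s),\qquad
\mathbb P(B=\omega)=\sum_{s} p_s\,\mathbb P(B=\omega\mid S=s).
\]
Thus $\mathcal L(A)$ and $\mathcal L(B)$ are mixtures of the conditional laws with the \emph{same} mixing weights $p_s$. This is the key structural point, and it is why we need $A$ and $B$ on a common probability space with a common $S$.

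Next, I will use the formula $d_{\mathrm{TV}}(\mu,\nu)=\frac12\sum_\omega|\mu(\omega)-\nu(\omega)|$ and the triangle inequality inside the absolute value:
\[
\frac12\sum_{\omega}\Bigl|\sum_s p_s\bigl(\mathbb P(A=\omega\mid S=s)-\mathbb P(B=\omega\mid S=s)\bigr)\Bigr|
\le \sum_s p_s\cdot\frac12\sum_\omega\bigl|\mathbb P(A=\omega\mid S=s)-\mathbb P(B=\omega\mid S=s)\bigr|.
\]
The inner quantity is exactly $d_{\mathrm{TV}}(\mathcal L(A\mid S=s),\mathcal L(B\mid S=s))$. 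The right-hand side is therefore $\mathbb E[d_{\mathrm{TV}}(\mathcal L(A\mid S),\mathcal L(B\mid S))]$, with the expectation taken over the law of $S$.

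There is essentially no obstacle. The only care needed is interpretive: the conditional laws are defined only on atoms with $p_s>0$, and these are the only ones contributing to the expectation. Interchanging the finite sums is trivially justified. An equivalent route would be joint convexity of total variation under mixtures with equal weights, which is the same computation.
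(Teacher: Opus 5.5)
Your proposal is correct and is essentially the paper's proof: both write $\mathcal L(A)$ and $\mathcal L(B)$ as mixtures of the conditional laws with the common weights $p_s=\mathbb P(S=s)$, then apply the triangle inequality inside $\frac12\sum_\omega|\cdot|$. Your opening mention of the coupling characterization is never actually used, and the argument does not need it.
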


\begin{proof}
    Let \(p_s=\bbP(S=s)\), \(\mu_s=\mathcal L(A\mid S=s)\), and \(\nu_s=\mathcal L(B\mid S=s)\). By the law of total probability and the triangle inequality,
    \begin{align*}
        d_{\mathrm{TV}}(\mathcal L(A),\mathcal L(B))
        &=\frac12\sum_{x\in\Omega}\left|\sum_{s\in\mathcal S}p_s(\mu_s(x)-\nu_s(x))\right|\\
        &\le\sum_{s\in\mathcal S}p_s d_{\mathrm{TV}}(\mu_s,\nu_s),
    \end{align*}
    which is the claimed bound.
\end{proof}

The next lemma is the key self-reducibility step, corresponding to
\cite[Lemma 2.5]{jain2024hitting}. Conditioned on \(\mathcal G(T)\), the
star-transposition walk evolves only on \([n]\setminus T\). This allows us to
import the deterministic time approximation from
Section~\ref{sec:deterministic-time-approx-for-star} on the reduced space.

\begin{lem} \label{lem:self-reducibility}
    Let \(s\in\mathbb Z_{\ge0}\), let \(s_n=\lfloor n\log n\rfloor\), and suppose
    \[s=s_n+nc_n^\star(s), \qquad |c_n^\star(s)|\le \tfrac{1}{2}\log\log n-4\log\log\log n. \]
    Let \(T\subseteq [n]_{\star}\) with \(|T| \leq n^{o(1)}\), and let \(\widetilde{Y}_s^{\,T}\) denote the star transposition walk \(Y_s\) conditioned on the event \(\mathcal G(T)\). Let \(\nu^{\star,T}\) be the distribution defined in Definition~\ref{def:nu-star-T}. Then
    \[d_{\mathrm{TV}}\!\left(\mathcal L(\widetilde{Y}_s^{\,T}),\nu^{\star,T}\right) \le n^{-1/2+o(1)}.\]
\end{lem}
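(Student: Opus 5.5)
The plan is to condition on the number of steps at which the walk is actually active on \([n]\setminus T\), identify the resulting process as a star-transposition walk on a smaller deck, and then invoke Corollary~\ref{cor:final-tv-distance-star-walk} on that deck.

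Conditioned on \(\mathcal G(T)\), the indices \(I_1,\dots,I_s\) are i.i.d.\ uniform on \([n]\setminus T\), since \(1\notin T\). Relabel \([n]\setminus T\) order-preservingly as \([n']\), where \(n':=n-|T|\). The hub \(1\) stays \(1\), and conjugation by this relabeling commutes with the fixed-point structure. Under this identification, \(\widetilde Y_s^{\,T}\) is exactly the star-transposition walk on \(\mathfrak S_{n'}\) run for \(s\) steps, extended by fixing \(T\). Likewise, \(\nu^{\star,T}\) is precisely the non-hub measure \(\nu^\star_{s_T}\) on the reduced ground set. So it suffices to show
\[
d_{\mathrm{TV}}\bigl(Y^{(n')}_s,\nu^{\star}_{s_T}\bigr)\le n^{-1/2+o(1)}.
\]
There is no random number of active steps to average over here, because every step touches \([n]\setminus T\). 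Lemma~\ref{lem:TV-of-mixtures} is therefore not even needed.

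Next I would check that the time \(s\) lies in the window for deck size \(n'\). We have \(c^\star_{n'}(s)=(s-s_{n'})/n'\). Since \(|T|=n^{o(1)}\), it follows that \(s_n-s_{n'}=|T|(\log n+1)+O(1)=n^{o(1)}\). Hence \(c^\star_{n'}(s)=c^\star_n(s)\,n/n'+n^{-1+o(1)}=c^\star_n(s)+n^{-1+o(1)}\). The assumed margin \(4\log\log\log n\) versus \(2\log\log\log n'\) gives \(|c^\star_{n'}(s)|\le\frac12\log\log n'-2\log\log\log n'\) for large \(n\). Corollary~\ref{cor:final-tv-distance-star-walk} on \(\mathfrak S_{n'}\) then yields \(d_{\mathrm{TV}}(Y^{(n')}_s,\nu^{(n')}_{t})\le n'^{-1/2+o(1)}=n^{-1/2+o(1)}\), where \(t=\lceil s/2\rceil\).

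It remains to compare \(\nu^{(n')}_t\), with Poisson parameter \(\gamma_{n',t}\) and forced fixed points chosen from all of \([n']\), against \(\nu^\star_{s_T}\), with parameter \(\gamma^\star_{n,s}\) and forced fixed points chosen from \([n']\setminus\{1\}\). I would handle the two discrepancies separately.

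\emph{Parameter mismatch.} We have \(\gamma_{n',t}=e^{-2c_{n'}(t)}\), where \(c_{n'}(t)=\frac12 c^\star_{n'}(s)+O(1/n')\). Hence \(|\gamma_{n',t}-\gamma^\star_{n,s}|\le n^{-1+o(1)}\gamma=n^{-1+o(1)}\). By the mixture bound and the Poisson coupling in the proof of Lemma~\ref{lem:comparison-poisson-general}, this costs \(n^{-1+o(1)}\). The truncation levels \(n'\) versus \(n'-1\) cost only \(n^{-\omega(1)}\).

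\emph{Hub mismatch.} This is the step I expect to be the main obstacle, because it is a genuine structural difference rather than a perturbation. Both measures are mixtures over \(m\sim\mathrm{Pois}(\gamma)\) of ``fix a uniform \(m\)-set, uniform on the rest.'' Couple the two measures by using the same \(m\). Then couple a uniform \(m\)-subset of \([n']\) with a uniform \(m\)-subset of \([n']\setminus\{1\}\) so that they agree unless the former contains \(1\). This happens with probability \(m/n'\). Averaging over \(m\), the total variation cost is at most \(\mathbb E[M]/n'=\gamma/n'=n^{-1+o(1)}\).

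Summing the three contributions gives \(n^{-1/2+o(1)}\), which proves the lemma.
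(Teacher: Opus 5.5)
Your proposal is correct and follows the paper's proof essentially step by step. The paper also identifies $\widetilde Y_s^{\,T}$ with the $(n-|T|)$-point star walk and checks the shifted window coordinate in order to invoke Corollary~\ref{cor:final-tv-distance-star-walk}; it then pays $n^{-1+o(1)}$ each for removing the hub from the forced-fixed-point set (via the same subset coupling) and for matching the Poisson parameters and truncation levels.
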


\begin{proof}
    Set \(m:=n-|T|\), \(u:=\lceil s/2\rceil\), and
    \(u_m:=\lfloor m\log m/2\rfloor\). Define
    \[
        \bar\gamma_{m,u}:=\exp\!\left(-\frac{2(u-u_m)}m\right).
    \]
    Let \(\bar\nu_{m,u}\) be the random-transposition mixture on
    \([n]\setminus T\) with parameter \(\bar\gamma_{m,u}\), and let
    \(\bar\nu^\star_{m,u}\) be the analogous mixture in which forced fixed
    points are selected only from the non-hub elements and the Poisson law is
    truncated at \(m-1\). We use
    \[
        d_{\mathrm{TV}}(\mathcal L(\widetilde Y_s^{\,T}),\nu^{\star,T})
        \le d_{\mathrm{TV}}(\mathcal L(\widetilde Y_s^{\,T}),\bar\nu_{m,u})
        +d_{\mathrm{TV}}(\bar\nu_{m,u},\bar\nu^\star_{m,u})
        +d_{\mathrm{TV}}(\bar\nu^\star_{m,u},\nu^{\star,T}).
    \]

    \smallskip
    \noindent\emph{Step 1: the reduced star walk.}
    Conditional on \(\mathcal G(T)\), every selected index is uniform on
    \([n]\setminus T\), so \(\widetilde Y_s^{\,T}\) is exactly the
    \(m\)-point star-transposition walk run for \(s\) steps. Its star-window
    coordinate satisfies
    \begin{align*}
        \left|\frac{s-\lfloor m\log m\rfloor}{m}\right|
        &\le |c_n^\star(s)|
        +\frac{1+|T|\bigl(|c_n^\star(s)|+1+\log n\bigr)}m\\
        &=|c_n^\star(s)|+n^{-1+o(1)},
    \end{align*}
    where we used the mean value theorem for \(x\log x\). Since
    \(\log\log m=\log\log n+o(1)\), the slack in the hypothesis places this
    reduced walk in the window of
    Corollary~\ref{cor:final-tv-distance-star-walk}. Therefore
    \[
        d_{\mathrm{TV}}(\mathcal L(\widetilde Y_s^{\,T}),\bar\nu_{m,u})
        \le n^{-1/2+o(1)}.
    \]

    \smallskip
    \noindent\emph{Step 2: removing the hub from the forced set.}
    Let \(N\sim\mathrm{Pois}(\bar\gamma_{m,u})\), and couple
    \(\bar K\sim(N\mid N\le m)\) and
    \(K^\star\sim(N\mid N\le m-1)\) maximally. Their laws differ by
    \(n^{-\omega(1)}\). Conditional on a common value \(k\le m-1\), the
    subset choices can differ only when the subset selected from all \(m\)
    points contains the hub, an event of probability \(k/m\). Hence
    \[
        d_{\mathrm{TV}}(\bar\nu_{m,u},\bar\nu^\star_{m,u})
        \le n^{-\omega(1)}
        +\frac{\mathbb E[N\mid N\le m]}{m}
        =n^{-1+o(1)}.
    \]

    \smallskip
    \noindent\emph{Step 3: matching the Poisson parameters.}
    Put
    \[
        \varepsilon_{m,s}:=
        2u-s+\lfloor m\log m\rfloor-2u_m\in\{0,1,2\}.
    \]
    Then
    \begin{align*}
        \Delta
        &:=\frac{2(u-u_m)}m-c_n^\star(s)\\
        &=\frac{s_n-\lfloor m\log m\rfloor
        +|T|c_n^\star(s)+\varepsilon_{m,s}}m
        =n^{-1+o(1)}.
    \end{align*}
    The target parameter is
    \(\gamma^\star_{m,s_T}=e^{-c_n^\star(s)}=n^{o(1)}\), so
    \[
        |\bar\gamma_{m,u}-\gamma^\star_{m,s_T}|
        =e^{-c_n^\star(s)}|e^{-\Delta}-1|
        =n^{-1+o(1)}.
    \]
    The coupling proof of Lemma~\ref{lem:comparison-poisson-general}, now
    with common truncation at \(m-1\), yields
    \[
        d_{\mathrm{TV}}(\bar\nu^\star_{m,u},\nu^{\star,T})
        \le n^{-1+o(1)}.
    \]
    Combining the three steps proves the lemma.
\end{proof}

\subsection{Approximate sufficient statistic}\label{subsection:approximate-sufficient-statistic}
We now combine the previous ingredients to prove that the untouched set is an approximate sufficient statistic. This is the star-transposition analogue of \cite[Proposition 2.1]{jain2024hitting}. As in Jain--Sawhney, the proof splits according to whether a permutation has many additional fixed points or not.

\begin{prop} \label{prop:main-prop-bound}
    Let \(s\in\mathbb Z_{\ge0}\), let \(s_n=\lfloor n\log n\rfloor\), and suppose \(s=s_n+nc_n^\star(s)\), where
    \(|c_n^\star(s)|\le \tfrac12\log\log n-4\log\log\log n\).
    Let \(T_n\subseteq[n]_\star\) satisfy \(|T_n|\le(\log n)^2\), and let \(Y_s^{\,T_n}\) denote the random variable \(Y_s\) conditioned on \(\mathcal F(T_n)\).
    Let \(\mu^{T_n}\) be the probability measure on \(\mathfrak S_n\) obtained by embedding the uniform distribution on \(\mathfrak S_{[n]\setminus T_n}\) into \(\mathfrak S_n\) via the natural inclusion.
    Then
    \[ d_{\mathrm{TV}}\!\left(\mathcal L(Y_s^{\,T_n}),\mu^{T_n}\right) \le n^{-1/2+o(1)}, \]
    uniformly over all such \(T_n\).
\end{prop}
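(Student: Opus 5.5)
By symmetry among the non-hub labels, we may assume $T_n=[M]_\star$ with $M=|T_n|+1\le(\log n)^2+1$. Relabeling $\{2,\dots,n\}$ by a permutation fixing $1$ commutes with the star walk and preserves the events $\mathcal G,\mathcal F$, so the bound becomes uniform in $T_n$ automatically. Write $\gamma=\gamma^\star_{n,s}$. The target is $\mu^{T_n}(\sigma)=1/(n-M+1)!$ for $\sigma$ fixing $[M]_\star$. We need to show that
\[
\mathbb P(Y_s=\sigma,\ \mathcal F(M))\approx \frac{\mathbb P(\mathcal F(M))}{(n-M+1)!}
\]
for most such $\sigma$. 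As in Jain--Sawhney, we split the permutations $\sigma$ fixing $[M]_\star$ according to $D(\sigma):=|\mathrm{Fix}_\star(\sigma)\setminus[M]_\star|$, with threshold $K_n$.

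\emph{Good permutations} ($D(\sigma)\le K_n$). On $\mathcal F(M)$, every non-hub card outside $[M]_\star$ is touched, while touched positions may still be fixed points. So we use inclusion--exclusion over untouched sets $T$ with $[M]_\star\subseteq T\subseteq\mathrm{Fix}_\star(\sigma)$. Any untouched non-hub position is a fixed point of $Y_s$, and applying \eqref{eq:g-and-f-relationship-indicator} pointwise on $\{Y_s=\sigma\}$ gives the exact identity
\[
\mathbb P(Y_s=\sigma,\mathcal F(M))=\sum_{[M]_\star\subseteq T\subseteq \mathrm{Fix}_\star(\sigma)}(-1)^{|T|-M+1}\,\mathbb P(\mathcal G(T))\,\mathbb P(\widetilde Y_s^{T}=\sigma).
\]
Replacing each $\mathbb P(\widetilde Y^T_s=\sigma)$ by $\nu^{\star,T}(\sigma)$, Lemma~\ref{lem:relate-f-to-g-dot-nu} turns the main term into $(1+O(n^{-1/2+o(1)}))\,\mathbb P(\mathcal F(M))/(n-M+1)!$. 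The error from this replacement cannot be controlled pointwise, so we sum it over $\sigma$. It is at most
\[
\sum_{T\supseteq[M]_\star,\ |T|\le M-1+K_n}\mathbb P(\mathcal G(T))\sum_\sigma\bigl|\mathbb P(\widetilde Y_s^T=\sigma)-\nu^{\star,T}(\sigma)\bigr|.
\]
By Lemma~\ref{lem:self-reducibility}, each inner sum is $n^{-1/2+o(1)}$. This requires $|T|=n^{o(1)}$, which holds since $K_n,(\log n)^2=n^{o(1)}$. By Lemma~\ref{lem:relationship-f(T)-g(T)}(3) with $L_n=M-1+K_n$, the outer sum is $n^{o(1)}\mathbb P(\mathcal F(M))$. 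Dividing by $\mathbb P(\mathcal F(M))$, the total $\ell^1$ error on good $\sigma$ is $n^{-1/2+o(1)}$.

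\emph{Bad permutations} ($D(\sigma)>K_n$). Under $\mu^{T_n}$, these have probability at most $\mathbb P(\mathrm{fixed\ points}>K_n)\le 1/K_n!=n^{-\omega(1)}$, since $K_n!\ge e^{(1-o(1))\log n\cdot(\text{growing})}$; more precisely, $K_n!=\exp((1+o(1))\log n)$ only gives $n^{-1+o(1)}$, which still suffices. For the conditioned walk, the good-$\sigma$ estimate already shows that $\mathcal L(Y_s^{T_n})$ puts mass $1-\mu^{T_n}(\mathrm{bad})-O(n^{-1/2+o(1)})$ on good permutations. Hence its bad mass is also $O(n^{-1/2+o(1)})$, and no separate estimate is needed. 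Summing the contributions of good and bad permutations gives the total variation bound.

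The step I expect to be the main obstacle is the exact inclusion--exclusion identity together with the summation over $\sigma$ of the replacement errors. In particular, one must check that $\mathcal L(Y_s\mid\mathcal G(T))$ on $\{\sigma:\ T\subseteq\mathrm{Fix}_\star(\sigma)\}$ is exactly $\widetilde Y^T_s$, and that swapping the order of summation keeps each $T$ paired with its own conditional law. Both hold because $\mathcal G(T)$ forces $T\subseteq\mathrm{Fix}_\star(Y_s)$.
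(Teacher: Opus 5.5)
Your proposal is correct. For permutations with at most $K_n$ extra fixed points you argue exactly as the paper does: the pointwise inclusion--exclusion identity, Lemma~\ref{lem:relate-f-to-g-dot-nu} for the main term, and the replacement error summed over $\sigma$ after swapping the order of summation, controlled by Lemma~\ref{lem:self-reducibility} and Lemma~\ref{lem:relationship-f(T)-g(T)}(3).

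Where you genuinely differ is on the permutations with more than $K_n$ extra fixed points (the paper's $\mathcal L_{\mathrm{fix}}$). The paper bounds the walk's mass there directly, in four steps:
\begin{enumerate}
\item it passes from $\mathcal F(M)$ to $\mathcal G(M)$;
\item it applies Lemma~\ref{lem:self-reducibility} once more to compare $\widetilde Y_s^{M^\star}$ with $\nu^{\star,M}$;
\item it proves $\nu^{\star,M}(\mathcal L_{\mathrm{fix}})\le n^{-1/2+o(1)}$ by an exponential-moment estimate for the Poisson forced points plus the fixed points of a uniform permutation;
\item it converts back using $\mathbb P(\mathcal G(M))\le n^{o(1)}\mathbb P(\mathcal F(M))$.
\end{enumerate}
You instead use that $\mathcal L(Y_s^{T_n})$ and $\mu^{T_n}$ are both probability measures on permutations fixing $[M]_\star$, which holds because $\mathcal F(M)\subseteq\mathcal G(M)$. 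The $\ell^1$ bound on good permutations then forces the walk's bad mass to be at most $\mu^{T_n}(\text{bad})+n^{-1/2+o(1)}$. After that you only need the elementary bound $\mu^{T_n}(\text{bad})\le 1/(K_n+1)!=n^{-1+o(1)}$.

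Your complement-mass argument is shorter and avoids the tail analysis of $\nu^{\star,M}$ entirely. The paper's route bounds the bad mass of the walk independently of the good-set estimate, so the two halves of its argument are decoupled. One small correction: delete the initial claim that $1/K_n!=n^{-\omega(1)}$. Since $K_n\sim\log n/\log\log n$, one has $K_n!=n^{1+o(1)}$, so the tail is only $n^{-1+o(1)}$, as you then note; this still suffices.
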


\begin{proof}
    The star-transposition walk is invariant under relabelings of the non-hub elements that fix \(1\). We may therefore assume that \(T_n=[M_n]_\star\), where \(M_n=|T_n|+1\). As before, suppress the dependence on \(n\) and write \(M=M_n\), \(Y_s^{M^\star}=Y_s^{[M]_\star}\), \(\mu^{M^\star}=\mu^{[M]_\star}\), and \(\nu^{\star,M}=\nu^{\star,[M]_\star}\). Let \(\mathfrak S_n^{M^\star}\) denote the set of permutations that fix \([M]_{\star}\). 
    By the definition of total variation distance 
    \[d_{\mathrm{TV}} \! \left(\mathcal L(Y_s^{\,M^\star}),\mu^{M^\star}\right) 
    = \frac{1}{2} \sum_{\sigma \in \mathfrak{S}_n^{M^\star}} |\bbP(Y_s^{M^\star} = \sigma) - \mu^{M^\star}(\sigma)| 
    = \frac{1}{2} \sum_{\sigma \in \mathfrak{S}_n^{M^\star}} \left|\frac{\bbP(\{Y_s = \sigma\}\cap\mathcal{F}(M))}{\bbP(\mathcal{F}(M))} - \mu^{M^\star}(\sigma) \right|.\]
    Multiplying both sides by \(\bbP(\mathcal{F}(M))\), it suffices to show 
    \begin{equation}\label{eq:rewrite-eq-prop2.1}
    \sum_{\sigma \in \mathfrak{S}_n^{M^\star}} \left|\bbP(\{Y_s = \sigma\}\cap\mathcal{F}(M)) - \bbP(\mathcal{F}(M)) \cdot \mu^{M^\star}(\sigma) \right| \leq n^{-1/2 + o(1)}\bbP(\mathcal{F}(M)).
    \end{equation}

    We decompose according to the number of fixed non-hub elements beyond
    the forced set \([M]_\star\):
    \[
        \mathcal S_{\mathrm{fix}}:=\left\{\sigma\in\mathfrak S_n^{M^\star}:
        |\mathrm{Fix}_\star(\sigma)\setminus[M]_\star|\le K_n\right\},
        \qquad
        \mathcal L_{\mathrm{fix}}:=\mathfrak S_n^{M^\star}\setminus\mathcal S_{\mathrm{fix}}.
    \]
    Recall that for any \(T \subseteq [n]_\star\),
    \(\widetilde{Y}_s^T\) denotes the random variable \(Y_s\) conditioned
    on \(\mathcal{G}(T)\). In particular, we write
    \(\widetilde{Y}_s^{M^\star} := \widetilde{Y}_s^{[M]_\star}\).

    \smallskip

    \noindent\textbf{Contribution from \(\mathcal L_{\mathrm{fix}}\).}
    By Lemma~\ref{lem:self-reducibility},
    \[ d_{\mathrm{TV}}\!\left(\mathcal L(\widetilde{Y}_s^{\,M^\star}),\nu^{\star,M}\right) \le n^{-1/2+o(1)}.\]
    Restricting the sum to \(\mathcal L_{\mathrm{fix}} \subset \mathfrak S_n^{M^\star}\), using \(|a-b|\ge a-b\), and rearranging the terms, we obtain
    \[ \sum_{\sigma\in\mathcal L_{\mathrm{fix}}} \mathbb P(\widetilde{Y}_s^{\,M^\star}=\sigma) \le n^{-1/2+o(1)} + \nu^{\star,M}(\mathcal L_{\mathrm{fix}}).\]

    It remains to bound \(\nu^{\star,M}(\mathcal L_{\mathrm{fix}})\). Under
    \(\nu^{\star,M}\), let \(Q\) be the number of additional forced fixed
    points, and let \(F\) be the number of fixed points of the uniform
    permutation subsequently sampled on the remaining ground set. Then
    \[
        |\mathrm{Fix}_\star(\sigma)\setminus[M]_\star|\le Q+F.
    \]
    If \(F_r\) is the number of fixed points of a uniform permutation of
    \(r\) elements, then
    \(\mathbb E[(F_r)_j]\le1\) for every \(j\), and therefore, for
    \(\theta\ge0\),
    \[
        \mathbb E e^{\theta F_r}
        =\sum_{j\ge0}\frac{(e^\theta-1)^j}{j!}
        \mathbb E[(F_r)_j]
        \le \exp(e^\theta-1).
    \]
    The truncation probability for
    \(Q\sim\mathrm{Pois}(\gamma_{n,s}^\star)\) is
    \(1-n^{-\omega(1)}\). It follows, uniformly in the size of the
    remaining ground set, that
    \[
        \mathbb E_{\nu^{\star,M}}e^{\theta(Q+F)}
        \le 2\exp\!\left((\gamma_{n,s}^\star+1)(e^\theta-1)\right).
    \]
    Taking \(e^\theta=K_n/(\gamma_{n,s}^\star+1)\), which exceeds \(1\)
    for all sufficiently large \(n\), gives
    \begin{align*}
        \nu^{\star,M}(\mathcal L_{\mathrm{fix}})
        &\le 2\left(\frac{e(\gamma_{n,s}^\star+1)}{K_n}\right)^{K_n}
        \le n^{-1/2+o(1)}.
    \end{align*}
    Here we used
    \(\gamma_{n,s}^\star\le
    \sqrt{\log n}/(\log\log n)^4\). Consequently,
    \begin{equation}\label{eq:final-tilde-X-bound}
        \sum_{\sigma\in\mathcal L_{\mathrm{fix}}}
        \mathbb P(\widetilde Y_s^{\,M^\star}=\sigma)
        \le n^{-1/2+o(1)}.
    \end{equation}

    We now bound the portion of the sum in \eqref{eq:rewrite-eq-prop2.1} indexed by \(\mathcal L_{\mathrm{fix}}\). Since \(\mathcal F(M)\subseteq \mathcal G(M)\),
    \[\sum_{\sigma\in\mathcal L_{\mathrm{fix}}} \mathbb P(\{Y_s=\sigma\}\cap\mathcal F(M)) 
    \le \sum_{\sigma\in\mathcal L_{\mathrm{fix}}} \mathbb P(\{Y_s=\sigma\}\cap\mathcal G(M)).\]
    Conditioning on \(\mathcal G(M)\) and using \eqref{eq:final-tilde-X-bound},
    \[ \sum_{\sigma\in\mathcal L_{\mathrm{fix}}} \mathbb P(\{Y_s=\sigma\}\cap\mathcal G(M))
    \le \mathbb P(\mathcal G(M))\,n^{-1/2+o(1)}
    \le \mathbb P(\mathcal F(M))\,n^{-1/2+o(1)}, \]
    where the last inequality uses \(\mathbb P(\mathcal G(M))\le n^{o(1)}\mathbb P(\mathcal F(M))\), which follows from Lemma~\ref{lem:relationship-f(T)-g(T)}(2) and \(e^{\gamma_{n,s}^\star}=n^{o(1)}\).
    The same exponential-moment estimate with \(Q=0\) gives
    \[
        \mu^{M^\star}(\mathcal L_{\mathrm{fix}})
        \le \left(\frac e{K_n}\right)^{K_n}
        =n^{-1+o(1)}.
    \]
    Therefore
    \[
        \sum_{\sigma \in \mathcal L_{\mathrm{fix}}} \bbP(\mathcal{F}(M))
        \mu^{M^\star}(\sigma)
        \le n^{-1+o(1)}\bbP(\mathcal F(M)).
    \]
    Combining these estimates shows that the total contribution from \(\mathcal L_{\mathrm{fix}}\) is \(n^{-1/2+o(1)}\bbP(\mathcal{F}(M))\).

    \smallskip

    \noindent\textbf{Contribution from \(\mathcal S_{\mathrm{fix}}\).}
    By inclusion--exclusion (Equation~\ref{eq:g-and-f-relationship-indicator}), for each \(\sigma\) we have
    \[\bbP(\{Y_s = \sigma\} \cap \mathcal{F}(M)) = \sum_{[M]_\star \subseteq T \subseteq \mathrm{Fix}_\star(\sigma)} (-1)^{|T| - M + 1} \bbP(\{Y_s = \sigma\} \cap \mathcal{G}(T)).\]
    Since \(\mu^{M^\star}(\sigma)=1/(n-M+1)!\), Lemma~\ref{lem:relate-f-to-g-dot-nu} gives
    \[\bbP(\mathcal{F}(M)) \cdot \mu^{M^\star}(\sigma) = \sum_{[M]_\star \subseteq T \subseteq \mathrm{Fix}_{\star}(\sigma)} (-1)^{|T|-M+1} \, \mathbb{P}(\mathcal{G}(T)) \, \nu^{\star, T}(\sigma)
    +O\!\left(\frac{n^{-1/2+o(1)}\bbP(\mathcal{F}(M))}{(n-M+1)!}\right).\]
    Subtracting these expressions and applying the triangle inequality yields
    \begin{align*}
    \sum_{\sigma \in \mathcal S_{\mathrm{fix}}} &\left|\bbP(\{Y_s = \sigma\}\cap\mathcal{F}(M)) - \bbP(\mathcal{F}(M)) \cdot \mu^{M^\star}(\sigma) \right| \\
    &\leq n^{-1/2+o(1)}\bbP(\mathcal{F}(M)) + \sum_{\sigma \in \mathcal S_{\mathrm{fix}}}\sum_{[M]_\star \subseteq T \subseteq \mathrm{Fix}_{\star}(\sigma)} \left|\bbP(\{Y_s = \sigma\} \cap \mathcal{G}(T)) - \mathbb{P}(\mathcal{G}(T)) \, \nu^{\star, T}(\sigma)\right|.
    \end{align*}
    Switching the order of summation and applying Lemma~\ref{lem:self-reducibility} gives
    \[\leq n^{-1/2+o(1)}\bbP(\mathcal{F}(M)) + 2n^{-1/2 + o(1)}\sum_{[M]_\star \subseteq T,\, |T| \leq M-1+K_n} \bbP(\mathcal{G}(T)). \]
    Here the factor \(2\) comes from the identity \(\sum_\sigma|\mu(\sigma)-\nu(\sigma)|=2d_{\mathrm{TV}}(\mu,\nu)\).
    Finally, Lemma~\ref{lem:relationship-f(T)-g(T)}(3) implies that the last sum is
    \[\leq  n^{-1/2 + o(1)}\bbP(\mathcal{F}(M)).\]
    Combining the \(\mathcal S_{\mathrm{fix}}\) and \(\mathcal L_{\mathrm{fix}}\) contributions proves
    \eqref{eq:rewrite-eq-prop2.1}, and hence the proposition.
\end{proof}

\section{Hitting-time mixing} \label{sec:hitting-time-mixing}

With the deterministic-time approximation from
Section~\ref{sec:deterministic-time-approx-for-star} and the approximate
sufficient statistic from
Section~\ref{sec:approximate-sufficient-statistic} in hand, we now prove the
hitting-time mixing theorem. We stop the walk at a deterministic time \(t^*\)
chosen so that the untouched set \(R_{t^*}\) is typically very small and hence
Proposition~\ref{prop:main-prop-bound} applies. Starting from a uniform
permutation on the touched set, we then run a marking process in the spirit of
a strong uniform time {\cite[Chapter 5]{MR2695907}}.
For the sequence of star transpositions \((1, I_1), (1, I_2), (1, I_3), \ldots\), let \(\tau\) be the random stopping time 
\[\tau := \min \{t : \{2, 3, \ldots ,n\} \subseteq \{I_1, \ldots, I_t\}\}.\]
The proof compares the original star-transposition walk at time \(\tau\) to this auxiliary marking process.

\begin{proof}[ of Theorem~\ref{thm:hitting-time-mixing-star}]
    Set
    \[
        a_n:=\frac12\log\log n-4\log\log\log n,
        \qquad
        t^*:=\left\lceil s_n-na_n\right\rceil.
    \]
    Then \(t^*\) is an integer and
    \(c_n^\star(t^*)=(t^*-s_n)/n\in[-a_n,-a_n+1/n]\), so the hypotheses of Proposition~\ref{prop:main-prop-bound} hold at time \(t^*\). For \(t\ge0\), let
    \[
        R_t:=[n]_\star\setminus\{I_1,\ldots,I_t\}
    \]
    be the set of indices not selected by time \(t\).

    We first define an augmented marking process. Given a permutation-valued chain \((\sigma_t)_{t\ge t^*}\), mark at time \(t^*\) every card in \([n]\setminus R_{t^*}\), and let \(V_t\) denote the set of marked card labels at time \(t\). For \(t\ge t^*\), draw \(I_{t+1}\) uniformly from \([n]\). Before applying the next transposition, if \(\sigma_t(1)\notin V_t\) and either \(I_{t+1}=1\) or \(\sigma_t(I_{t+1})\in V_t\), set
    \(V_{t+1}=V_t\cup\{\sigma_t(1)\}\); otherwise set \(V_{t+1}=V_t\). Then update
    \[
        \sigma_{t+1}=\sigma_t(1,I_{t+1}).
    \]
    Conditional on \(R_{t^*}=T\), let \(Z_{t^*}\sim\mu^T\), and evolve
    \(Z\) using the star-transposition updates and marking rule above. On
    the exceptional event \(\{\tau\le t^*\}\), define \(Z_\tau\)
    arbitrarily.

    We first establish a conditional uniformity property for \(Z\). Let
    \[
        U_t:=\{i\in[n]:Z_t(i)\in V_t\}
    \]
    be the set of positions occupied by marked cards. We claim that,
    conditional on the marking history, the untouched-set history through
    time \(t\), and the complete placement of the unmarked cards, the map
    \(Z_t|_{U_t}:U_t\to V_t\) is a uniformly random bijection. At time
    \(t^*\), this follows from the definition of \(\mu^{R_{t^*}}\).

    We will also use the following invariant. If \(j\in R_t\), then position
    \(j\) has never participated in a transposition. At time \(t^*\), the law
    \(\mu^{R_{t^*}}\) fixes \(j\), and as long as \(j\) remains untouched we
    have \(Z_t(j)=j\); this card is unmarked. Consequently,
    \[
        U_t\cap R_t=\varnothing
    \]
    for every \(t\ge t^*\).

    Suppose the claim holds at time \(t\). If no card is marked at the next step, then either two unmarked cards are transposed, in which case the marked bijection is unchanged, or the top card is marked already. In the latter case, the update either permutes two positions of \(U_t\), or replaces position \(1\) in \(U_t\) by the selected unmarked position. Conditional on the updated unmarked-card placement, each operation is a bijection on the set of assignments of \(V_t\) to \(U_t\), so uniformity is preserved.

    If a new card \(v=Z_t(1)\) is marked, then \(1\notin U_t\) and
    \(I_{t+1}\in\{1\}\cup U_t\). Conditional on the new marking and
    untouched-set histories and on the placement of the remaining unmarked
    cards, these \(|V_t|+1\) choices are equally likely: by the preceding
    invariant, none of the positions in \(\{1\}\cup U_t\) is untouched, so the untouched-set history does not
    distinguish among them. Moreover, all these choices leave the placement
    of every still-unmarked card other than \(v\) unchanged, so conditioning
    on that placement introduces no additional bias. After the update,
    \(V_{t+1}=V_t\cup\{v\}\) and \(U_{t+1}=U_t\cup\{1\}\); the choice of
    \(I_{t+1}\) determines the position of \(v\) uniformly among the
    \(|V_t|+1\) positions in \(U_{t+1}\). Together with the inductive
    hypothesis, this makes \(Z_{t+1}|_{U_{t+1}}\) a uniform bijection. This
    proves the claim.

    \smallskip
    \noindent\emph{The law of \(Z_\tau\).}
    For \(k\in[n]_\star\), let \(J_k\) indicate that \(k\) is not selected among \(I_1,\ldots,I_{t^*}\). Then
    \[
        \Lambda_n:=\mathbb E|R_{t^*}|
        =(n-1)\left(1-\frac1n\right)^{t^*}
        =(1+o(1))(\log n)^{1/2}(\log\log n)^{-4}.
    \]
    View the choices \(I_1,\ldots,I_{t^*}\) as balls thrown independently into
    \(n\) bins, and let \(B_k\) be the occupancy of bin \(k\). The occupancy
    vector is negatively associated by \cite[Theorem 13]{dubhashi1996balls},
    and \cite[Proposition 7(2)]{dubhashi1996balls} shows that the same is true
    of the indicators \(1-J_k=\mathbf 1_{\{B_k\ge1\}}\). The marginal product
    bound \cite[Proposition 4]{dubhashi1996balls} therefore gives
    \[
        \mathbb P(\tau\le t^*)
        =\mathbb P(R_{t^*}=\varnothing)
        \le\prod_{k=2}^n\mathbb P(J_k=0)
        \le e^{-\Lambda_n}
        =\exp\bigl(-(\log n)^{1/2+o(1)}\bigr).
    \]

    Put \(\ell_n:=\lfloor(\log n)^2\rfloor\) and \(H_n:=\lceil2n\log n\rceil\), and define
    \[
        E_1:=\{|R_{t^*}|\le\ell_n\},
        \qquad
        E_2:=\{\tau-t^*\le H_n\}.
    \]
    For every integer \(m\ge1\), a union bound gives
    \[
        \mathbb P(|R_t|\ge m)
        \le\binom{n-1}{m}\left(1-\frac mn\right)^t.
    \]
    Applying this with \(m=\ell_n\) and \(t=t^*\) yields
    \(\mathbb P(E_1^c)=n^{-\omega(1)}\). Moreover,
    \[
        \mathbb P(E_2^c)
        \le\mathbb P(|R_{t^*+H_n}|>0)
        \le(n-1)\left(1-\frac1n\right)^{t^*+H_n}
        \le n^{-2+o(1)}.
    \]

    Let \(B\) be the event that, during the first \(H_n\) steps after \(t^*\), two consecutive updates select previously untouched indices: for some \(r\in\{t^*+1,\ldots,t^*+H_n-1\}\),
    \(I_r\in R_{r-1}\) and \(I_{r+1}\in R_r\). On \(E_1\), the untouched set has size at most \(\ell_n\) throughout this interval. Conditioning on the history through time \(r-1\) and then using the independence of \(I_r\) and \(I_{r+1}\) gives
    \[
        \mathbb P(B\cap E_1)
        \le H_n\left(\frac{\ell_n}{n}\right)^2
        =n^{-1+o(1)}.
    \]
    Define
    \[
        E:=\{\tau>t^*\}\cap E_1\cap E_2\cap B^c.
    \]
    We justify the marking assertion by induction over first-touch times. At
    time \(t^*\), the unmarked cards are precisely the labels in \(R_{t^*}\),
    each in its own untouched position. If \(I_r\in R_{r-1}\), then the update
    at time \(r\) moves the unmarked card \(I_r\) to position \(1\), and it is
    the only unmarked card outside \(R_r\). If \(r<\tau\), then on \(B^c\) we
    have \(I_{r+1}\notin R_r\). Hence either \(I_{r+1}=1\) or position
    \(I_{r+1}\) contains a marked card, so the marking rule marks \(I_r\)
    before the next transposition. Since \(R_{r+1}=R_r\), this restores the
    property that the unmarked cards are precisely the labels in \(R_{r+1}\),
    each in its own untouched position. Consequently, on \(E\), every newly
    touched card other than the last is marked at the following update. At time
    \(\tau\), the only unmarked card is \(L:=I_\tau\), and it occupies
    position \(1\). The stopping time \(\tau\), the event \(E\), and the
    identity of \(L\) are all determined by the untouched-set history. The
    fixed-time conditional-uniformity claim therefore applies on each event
    \(\{\tau=r\}\); summing over \(r\) shows that, given \(E\) and \(L\), the
    other \(n-1\) cards form a uniform bijection onto the other \(n-1\)
    positions.
    Moreover, \(E\) is invariant under every
    relabeling of \([n]_\star\), so \(L\), conditional on \(E\), is uniform
    on \([n]_\star\). It follows that
    \[
        \mathcal L(Z_\tau\mid E)
        =U_{\mathfrak S_n}\bigl(\,\cdot\mid \sigma(1)\ne1\bigr).
    \]
    The preceding bounds give
    \[
        \mathbb P(E^c)
        \le\mathbb P(\tau\le t^*)+\mathbb P(E_1^c)
        +\mathbb P(E_2^c)+\mathbb P(B\cap E_1)
        \le\exp\bigl(-(\log n)^{1/2+o(1)}\bigr).
    \]
    Since the uniform measure assigns mass \(1/n\) to
    \(\{\sigma:\sigma(1)=1\}\), we conclude that
    \begin{equation}\label{eq:auxiliary-hitting-bound}
        d_{\mathrm{TV}}(\mathcal L(Z_\tau),U_{\mathfrak S_n})
        \le \mathbb P(E^c)+\frac1n
        \le\exp\bigl(-(\log n)^{1/2+o(1)}\bigr).
    \end{equation}

    \smallskip
    \noindent\emph{Comparison of \(Y_\tau\) and \(Z_\tau\).}
    Write \(p_T:=\mathbb P(R_{t^*}=T)\). Conditional on \(R_{t^*}=T\),
    \[
        \mathcal L(Y_{t^*}\mid R_{t^*}=T)=Y_{t^*}^{\,T},
        \qquad
        \mathcal L(Z_{t^*}\mid R_{t^*}=T)=\mu^T.
    \]
    Lemma~\ref{lem:TV-of-mixtures}, Proposition~\ref{prop:main-prop-bound}, and the bound on \(E_1^c\) give
    \begin{align*}
        d_{\mathrm{TV}}(\mathcal L(Y_{t^*}),\mathcal L(Z_{t^*}))
        &\le\sum_{T\subseteq[n]_\star}p_T
        d_{\mathrm{TV}}(\mathcal L(Y_{t^*}^{\,T}),\mu^T)\\
        &\le\mathbb P(E_1^c)
        +\sum_{|T|\le\ell_n}p_T n^{-1/2+o(1)}
        \le n^{-1/2+o(1)}.
    \end{align*}

    More precisely, sample a common value \(R_{t^*}=T\) and, conditional
    on this value, take a maximal coupling \((W,V)\) of
    \(Y_{t^*}^{\,T}\) and \(\mu^T\). The preceding estimate gives
    \(\mathbb P(W\ne V)\le n^{-1/2+o(1)}\). Use the same subsequent
    choices \(I_{t^*+1},I_{t^*+2},\ldots\) for both chains. On
    \(\{\tau>t^*\}\cap\{W=V\}\), the two states agree at time \(\tau\).
    Therefore,
    \begin{equation}\label{eq:coupling-hitting-bound}
        d_{\mathrm{TV}}(\mathcal L(Y_\tau),\mathcal L(Z_\tau))
        \le\mathbb P(\tau\le t^*)+\mathbb P(W\ne V)
        \le n^{-1/2+o(1)}.
    \end{equation}

    By the triangle inequality for total variation and
    \eqref{eq:coupling-hitting-bound}, \eqref{eq:auxiliary-hitting-bound},
    which follow from Lemma~\ref{lem:TV-of-mixtures},
    Proposition~\ref{prop:main-prop-bound}, and the maximal coupling
    inequality, we have
    \begin{equation}
        \begin{aligned}
        d_{\mathrm{TV}}(\mathcal L(Y_{\tau}),U_{\mathfrak S_n})
        &\le d_{\mathrm{TV}}(\mathcal L(Y_\tau),\mathcal L(Z_\tau))
          +d_{\mathrm{TV}}(\mathcal L(Z_\tau),U_{\mathfrak S_n})\\
        &\le n^{-1/2+o(1)}
          +\exp\bigl(-(\log n)^{1/2+o(1)}\bigr)\\
        &\le \exp\bigl(-(\log n)^{1/2+o(1)}\bigr).
        \end{aligned}
    \end{equation}
    as claimed.
\end{proof}

\bibliographystyle{plain}
\bibliography{citations}
\end{document}